\newfont{\msam}{msam10}
\newtheorem{theorem}[]{Theorem}
\newtheorem{proposition}[]{Proposition}
\newtheorem{corollary}[]{Corollary}
\newtheorem{lemma}[]{Lemma}
\theoremstyle{definition}
\newtheorem{definition}[]{Definition}
\newtheorem{remark}[]{Remark}
\newtheorem{example}[]{Example}
\def\remark{\noindent\textbf{Remark.}}
\newtheorem{prop}[theorem]{Proposition}
\newtheorem{cor}[theorem]{Corollary}
\let\nc\newcommand
\def\ni{\noindent}
\def\bthm{\begin{theorem}}
\def\ethm{\end{theorem}}
\def\blemma{\begin{lemma}}
\def\elemma{\end{lemma}}
\def\bproof{\begin{proof}}
\def\eproof{\end{proof}}
\def\bprop{\begin{proposition}}
\def\eprop{\end{proposition}}
\def\bcor{\begin{corollary}}
\def\ecor{\end{corollary}}
\nc{\la}{\label}
\def\A{\mathbb{A}}
\def\Z{\mathbb{Z}}
\def\N{\mathbb{N}}
\def\Q{\mathbb{Q}}
\def\X{\mathbb{X}}
\def\L{\boldsymbol{L}}
\def\R{\boldsymbol{R}}
\def\Com{\mathtt{Com}}
\def\Alg{\mathtt{Alg}}
\def\DGL{\mathtt{DGL}}
\def\Mod{\mathtt{Mod}}
\def\GrMod{\mathtt{GrMod}}
\def\Bimod{\mathtt{Bimod}}
\def\cAlg{\mathtt{Comm\,Alg}}
\def\Sets{\mathtt{Sets}}
\def\DGA{\mathtt{DGA}}
\def\DGBA{\mathtt{DGBA}}
\def\cDGMA{\mathtt{CDGMA}}
\def\cDGA{\mathtt{CDGA}}
\def\DGMod{\mathtt{DG\,Mod}}
\def\DGBimod{\mathtt{DG\,Bimod}}
\def\D{{\mathscr D}}
\def\C{{\mathscr C}}
\def\rtv#1{\!\!\sqrt[V]{#1}}
\def\Ho{{\mathtt{Ho}}}
\nc{\Ob}{{\rm Ob}}
\nc{\Mor}{{\rm Mor}}
\nc{\Hom}{{\rm{Hom}}}
\nc{\HOM}{\underline{\rm{Hom}}}
\nc{\DER}{\underline{\rm{Der}}}
\nc{\END}{\underline{\rm{End}}}
\nc{\Ext}{{\rm{Ext}}}
\nc{\Rep}{{\rm{Rep}}}
\nc{\DRep}{{\rm{DRep}}}
\nc{\NCRep}{\widetilde{\rm{Rep}}}
\nc{\RAct}{{\rm{RAct}}}
\nc{\bs}{\backslash}
\nc{\cn}{ \mbox{\rm c\^{o}ne} }
\nc{\ev}{{\tt{ev}}}
\nc{\n}{{\natural}}
\nc{\nn}{{{\natural} {\natural}}}
\nc{\colim}{{\tt{colim}}}
\nc{\B}{{\mathrm{B}}}
\nc{\Ba}{\overline{\mathrm{B}}}
\nc{\Ta}{\overline{\mathrm{T}}}
\nc{\bC}{\overline{C}}
\nc{\EXT}{\underline{\rm{Ext}}}
\nc{\TOR}{\underline{\rm{Tor}}}
\def\H{\mathrm H}
\def\tH{\mathrm{HC}}
\def\HC{\mathrm{HC}}
\def\HK{\mathrm{HK}}
\def\rHC{\overline{\mathrm{HC}}}
\def\rHH{\overline{\mathrm{HH}}}
\def\rCC{\overline{\mathrm{CC}}}
\def\CC{\mathrm{CC}}
\def\F{\mathcal F}
\def\G{\mathcal G}
\def\T{\mathrm T}
\def\ch{{\rm ch}}
\nc{\End}{{\rm{End}}}
\nc{\GL}{{\rm{GL}}}
\nc{\gl}{{\mathfrak{gl}}}
\nc{\g}{{\mathfrak{g}}}
\nc{\PGL}{{\rm{PGL}}}
\nc{\SL}{{\rm{SL}}}
\nc{\PSL}{{\rm{PSL}}}
\nc{\ad}{{\rm{ad}}}
\nc{\Ad}{{\rm{Ad}}}
\nc{\dlim}{\varinjlim}
\nc{\plim}{\varprojlim}
\def\eA{A^{\mbox{\scriptsize{\rm{e}}}}}
\def\eR{R^{\mbox{\scriptsize{\rm{e}}}}}
\def\cder{\mathrm{CDer}}
\def\DDER{\underline{{\mathbb D}{\rm er}}}
\newcommand{\HH}{{\rm{HH}}}
\newcommand{\Tor}{{\rm{Tor}}}
\newcommand{\Spec}{{\rm{Spec}}}
\newcommand{\Sym}{{\rm{Sym}}}
\newcommand{\Tot}{{\rm{Tot}}}
\newcommand{\bSym}{\boldsymbol{\Lambda}}
\newcommand{\bL}{\boldsymbol{\Lambda}}
\newcommand{\id}{{\rm{Id}}}
\newcommand{\bpar}{{\bar{\partial}}}
\newcommand{\Der}{{\rm{Der}}}
\newcommand{\der}{{\rm{Der}}}
\newcommand{\Tr}{{\rm{Tr}}}
\newcommand{\tTr}{\widetilde{\rm{Tr}}}
\newcommand{\NTr}{{\rm{NTr}}}
\newcommand{\tr}{{\rm{tr}}}
\newcommand{\Ker}{{\rm{Ker}}}
\newcommand{\im}{{\rm{Im}}}
\newcommand{\into}{\,\hookrightarrow\,}
\newcommand{\onto}{\,\twoheadrightarrow\,}
\newcommand{\sonto}{\,\stackrel{\sim}{\twoheadrightarrow}\,}
\newcommand{\sinto}{\,\stackrel{\sim}{\into}\,}
\newcommand{\rar}{\rightarrow}
\newcommand{\dga}{\mathtt{DGA}}
\newcommand{\FT}{\mathcal{C}}
\newcommand{\Hoch}{\mathcal{H}}
\newcommand{\eFT}{\widetilde{\FT}}
\newcommand{\Cyl}{\mathtt{Cyl}}
\newcommand{\Path}{\mathtt{Path}}
\newenvironment{lyxlist}[1]
{\begin{list}{}
{\settowidth{\labelwidth}{#1}
 \setlength{\leftmargin}{\labelwidth}
 \addtolength{\leftmargin}{\labelsep}
 }}
{\end{list}}
\numberwithin{equation}{section}
\numberwithin{theorem}{section}
\numberwithin{lemma}{section}
\numberwithin{proposition}{section}
\numberwithin{corollary}{section}
\numberwithin{example}{section}
\numberwithin{remark}{section}
\begin{document}

\title{Derived Representation Schemes and Cyclic Homology}
\author{Yuri Berest}
\address{Department of Mathematics,
 Cornell University, Ithaca, NY 14853-4201, USA}
\email{berest@math.cornell.edu}
\author{George Khachatryan}
\address{Department of Mathematics,
Cornell University, Ithaca, NY 14853-4201, USA}
\email{georgek@math.cornell.edu}
\author{Ajay Ramadoss}
\address{Departement Mathematik,
ETH Z\"urich,
8092 Z\"urich, Switzerland}
\email{ajay.ramadoss@math.ethz.ch}
%
%\thanks{The first author was partially supported by NSF grant DMS 09-01570, the second by an NSF Research Fellowship,
%and the third by the Swiss National Science Foundation (Ambizione Beitrag Nr. PZ00P2-127427/1).}
%
\begin{abstract}
We describe the derived functor $ \DRep_V(A) $ of the affine representation scheme $ \Rep_V(A) $ parametrizing
the representations of an associative $k$-algebra $A$ on a finite-dimensional vector space $V$.
We construct the characteristic maps $ \Tr_V(A)_n:\, \HC_n(A) \to \H_n[\DRep_V(A)]\,$ extending
the canonical trace $ \Tr_V(A):\, \HC_0(A) \to k[\Rep_V(A)]\,$ to the higher cyclic homology of
the algebra $A$, and describe a related derived version of the representation functor introduced
recently by M.~Van den Bergh. We study various operations on the homology of
$ \DRep_V(A) $ induced by known operations on cyclic and Hochschild homology of $A$.
\end{abstract}
\maketitle

\vspace{-.2in}

\section{Introduction}
\la{S1}
Let $A$ be an associative unital algebra over a field $k$. The affine representation scheme parametrizing the $k$-linear representations of $A$ on a finite-dimensional vector space $V$ can be defined as the functor on the category of commutative algebras:
\begin{equation}
\la{rep}
\Rep_V(A):\ \cAlg_{k} \to \Sets\ ,\quad C \mapsto \Hom_{\Alg_k}(A,\, \End\,V \otimes_k C)\ .
\end{equation}
It is well known that \eqref{rep} is representable, and we denote the corresponding commutative $k$-algebra by $\, A_V = k[\Rep_V(A)] \,$.
Varying $ A $ (while keeping $ V $ fixed), one can regard $ \Rep_V(A) $ as a functor on the category of associative algebras
$ \Alg_k $. A natural problem then is to describe the higher derived functors of $ \Rep_V $ in the sense of non-abelian homological algebra \cite{Q1}. I.~Ciocan-Fontanine and M.~Kapranov proposed a geometric solution to this problem as part of a general program of deriving {\it Quot}\, schemes and other moduli spaces in algebraic geometry (see \cite{CK} and also \cite{BCHR, TV}). In this paper, we offer a different, more explicit construction which has its roots in classic works of
G.~Bergman and P.~M.~Cohn on universal algebra. As a result, we find a close relation of this problem to cyclic homology, which in retrospect looks very natural, almost inevitable, but was originally unexpected.

Our approach is motivated by recent developments in noncommutative geometry based on the heuristic principle \cite{KR}
that any geometrically meaningful structure on a noncommutative algebra $A$ should induce
standard commutative structures on all representation spaces $ \Rep_V(A) $
(see \cite{CQ, KR, G, LeB, LBW, CEG, vdB, vdB1, Ber}).
In practice, this principle works well only when $A$ is a (formally) smooth algebra, since in that case
$ \Rep_V(A) $ are smooth schemes for all $V$. Passing from $ \Rep_V(A) $ to the derived representation scheme
$ \DRep_V(A) $ amounts, in a sense, to desingularizing $ \Rep_V(A) $, so one may expect that
$ \DRep_V(A) $ will play a role in the geometry of arbitrary noncommutative algebras similar to the role of
$ \Rep_V(A) $ in the geometry of smooth algebras. In this paper, we make first steps in this direction by
constructing canonical trace maps with values in the homology of $ \DRep_V(A) $ and deriving the representation
functor on bimodules introduced recently in \cite{vdB}.

To clarify the idea consider the vector space $ \HC_0(A) := A/[A,A] $. The natural trace map
\begin{equation}
\la{trr}
\Tr_V(A):\, \HC_0(A) \to A_V\ ,\ \bar{a} \mapsto [\varrho \mapsto \Tr\, \varrho(a)]\ ,
\end{equation}
transforms the elements of $ \HC_0(A) $ to functions on $ \Rep_V(A) $ for each $V$. This suggests that the
0-th cyclic homology of $A$ should be thought of as a {\it space of functions} on the noncommutative `$ \Spec(A)$'
({\it cf.} \cite{KR}, Section~1.3.2).
Now, the derived functor of $ \Rep_V $ is represented (in the homotopy category of commutative DG algebras)
by a DG algebra $ \DRep_V(A) $. The homology of this DG algebra depends only on $A$ and $V$,
with $\ \H_0[\DRep_V(A)]  $ being isomorphic to $ A_V $. We denote $ \H_\bullet(A,V) := \H_\bullet[\DRep_V(A)] $ and
call $ \H_\bullet(A,V) $ the {\it representation homology} of $A$ with coefficients in $V$.
It turns out that representation homology is related to cyclic homology, and one of the main goals
of the present paper (and its sequel \cite{BR}) is to clarify this relation. We will construct functorial
trace maps
\begin{equation}
\la{trr1}
\Tr_V(A)_n:\, \HC_n(A) \to \H_n(A,V)\ ,\quad \forall\,n\ge 0\ ,
\end{equation}
extending the usual trace \eqref{trr} to higher cyclic homology.
In the spirit of Kontsevich-Rosenberg principle, the existence of \eqref{trr1} suggests that the full
cyclic homology $ \HC_\bullet(A) $ of an algebra $A$ should be thought of as a
{\it derived space of functions} on `$ \Spec(A)$.' We will study various operations on $\,\H_\bullet(A,V)\,$
arising from known operations on $ \HC_\bullet(A) $. The moral appears to be that all interesting
structures on cyclic and Hochschild homology (Bott periodicity, the Connes differential, the
Gerstenhaber bracket, $ \ldots $) induce via \eqref{trr1} interesting geometric structures on representation
homology. This provides a natural link between noncommutative algebraic geometry and the noncommutative
differential geometry developed by Tsygan, Tamarkin and others (see \cite{DGT}, \cite{TT}, \cite{TT1}, \cite{T1}
and references therein).

The results of this paper may be of interest beyond noncommutative geometry. Many important
varieties in algebra, geometry and physics can be realized as moduli spaces of
finite-dimensional representations of associative algebras and groups. The simplest
and most commonly used
are the character varieties parametrizing the isomorphism classes of semisimple representations.
When the base field $ k$ has characteristic zero (as we will always assume in this paper),
the character variety of $A$ in $V$ can be identified with the categorical quotient $ \Rep_V(A)/\!/\GL(V) $
of $ \Rep_V(A) $ by the natural action of $\GL(V) $.
The trace map \eqref{trr} takes its values in the corresponding commutative algebra $ A_V^{\GL(V)} $,
and these values are interpreted as characters of representations. Now, % if $A$ is finitely generated,
a well-known theorem of Procesi \cite{P} implies that the characters of $A$ actually generate $ A_V^{\GL(V)} $
as an algebra; in other words, the algebra map induced by \eqref{trr}:
\begin{equation}
\la{trra}
\Sym\,\Tr_V(A):\, \Sym\,[\HC_0(A)] \to A_V^{\GL(V)}
\end{equation}
is {\it surjective}. This result plays a crucial role in all applications as it
provides a natural presentation for character varieties.

The character varieties can be also `derived': the invariant functor $ A \mapsto A_V^{\GL(V)} $ has a left derived
functor, whose homology is isomorphic to $ \H_\bullet(A,V)^{\GL(V)} $ (see Section~\ref{S2.3.4}). The higher
traces \eqref{trr1} take their values in $ \H_\bullet(A,V)^{\GL(V)} $, and thus can be viewed as {\it derived
characters} of finite-dimensional representations of $A$. Assembled together, they define a homomorphism of graded commutative algebras
\begin{equation}
\la{trra1}
\bL \Tr_V(A)_\bullet:\,
{\bSym}[\HC_\bullet(A)] \to \H_\bullet(A,V)^{\GL(V)}\ ,
\end{equation}
where $ \bL $ stands for the graded symmetric algebra over $k$. Then, a natural question is
whether the Procesi Theorem extends to the derived setting: namely,
\begin{equation}
\la{quest}
\textit{Is the map \eqref{trra1} surjective}\,?
\end{equation}
We address this question in our subsequent paper \cite{BR}, where we show
that there are homological obstructions to surjectivity of \eqref{trra1} and give
examples where such obstructions do not vanish. Thus, the answer to \eqref{quest} turns 
out to be negative in general.

%This suggests that, at the level of homology,
%it might be natural to consider the {\it trace subalgebra} $\, \H_\bullet(A,V)^{\Tr} $, which is
%defined to be the image of \eqref{trra1}, as the `coordinate ring' of the derived character
%variety of $A$.
%We will justify this point of view in \cite{BR1}, where we construct
%a derived version of the $ \SL_2(\c)$-character variety of a f.g. discrete group $G$.
%Such varieties play an important role in many areas of mathematics, especially in knot %theory and
%low dimensional topology, where $ G $ is the fundamental group of an oriented surface or
%3-manifold (see, e.g., \cite{CS}).

For non-unital algebras, the representation homology can be stabilized by passing to 
infinite-dimensional limit: $\, \dim_k V \to \infty $. In \cite{BR}, we will show that the maps \eqref{trra1} `converge' in that limit to an {\it isomorphism}
\begin{equation}
\la{trra2}
{\bL}[\HC_\bullet(A)] \stackrel{\sim}{\to} \H_\bullet(A, \infty)^{\Tr}\ ,
\end{equation}
where $\, \H_\bullet(A, \infty)^{\Tr} \,$ is the homology of a certain canonical (dense) DG subalgebra\footnote{We will review the construction of this subalgebra together with some of 
the main results of \cite{BR} in Section~\ref{S4.6} below.}
of $\,\DRep_{\infty}(A)^{\GL(\infty)} \cong \varprojlim\, 
\DRep_V(A)^{\GL(V)} $.  The isomorphism \eqref{trra2} is analogous to the well-known isomorphism of Loday, Quillen
\cite{LQ} and Tsygan \cite{T}:
\begin{equation}
\la{trra3}
\H_\bullet(\gl_\infty(A), k) \stackrel{\sim}{\to} \bL [\HC_{\bullet-1}(A)]\ ,
\end{equation}
which describes the stable homology of matrix Lie algebras $ \gl_V(A) $
in terms of cyclic homology. Although we establish a precise relation between \eqref{trra2}
and \eqref{trra3} (see Section~\ref{S4.5} below), this analogy still remains mysterious.

One puzzling question is a connection to algebraic $K$-theory. It is well known
that the Loday-Quillen-Tsygan isomorphism \eqref{trra3} has a natural `multiplicative'
analogue ({\it cf.} \cite{Q, L1}):
\begin{equation}
\la{trra4}
\H_\bullet(\GL_\infty(A), \Q) \cong {\bL}_{\Q}({\rm K}_{\bullet}(A) \otimes_{\Z} \Q)\ ,
\end{equation}
which identifies the rational $K$-theory of $A$ with the stable homology of general
linear groups $ \GL_V(A) $. Now, does \eqref{trra2} have a `multiplicative' analogue?
As we will see, there is a natural `noncommutative' version of representation homology,
which is related (via an appropriate trace map) to Hochschild homology
(see Section~\ref{S4.7} below). This relation is analogous to Loday's isomorphism
$\,{\rm HL}_\bullet(\gl_\infty(A), k) \cong {\rm T}_k[\HH_{\bullet-1}(A)] \,$, which
identifies the Leibniz homology of $ \gl_\infty(A) $ with the tensor algebra
of Hochschild homology of $A$ (see \cite{L}). Then, in the spirit
of Loday's conjectures \cite{L2}, one should expect that the answer
to the above question is affirmative.

Another question is more of practical nature. In characteristic zero, the cyclic homology
of  $A$ can be computed using Connes' cyclic complex $ \CC(A) $ ({\it cf.} Section~\ref{S4.3}).
Similarly, the Lie algebra homology of $ \gl_V(A) $ can be computed using the classical
Chevalley-Eilenberg complex. Both these complexes are `small'
in the sense that neither free algebras nor free products of $A$ are involved in
their construction. In view of the above relations to representation homology,
one might expect that the latter can be also computed using some `small' canonical
complex constructed out of $A$ and $V$. Having such a complex would simplify the results
of this paper and \cite{BR} in a substantial way. We note that our main construction
provides a `big' canonical complex computing $ \H_\bullet(A, V) $ for any algebra $A$,
namely
$$
C_\bullet(A,V) = (V^* \otimes_{\End\,V} (\End\,V \ast_k D_\bullet A) \otimes_{\End\,V} V)_\nn\ ,
$$
where $ D_\bullet(A) $ is the cobar-bar resolution of $A$ and $ (\,\mbox{--}\,)_\nn $ denotes
abelianization (see Section~\ref{S2}).

Finally, a few words about generalizations. In this paper, we focus on ordinary
associative DG algebras (more precisely, on morphisms of such algebras) as we believe
that these classical objects are  most important for applications. It is clear, however,
that our construction of derived representation schemes and the relation to cyclic homology
can be extended to other categories, including DG algebras over an arbitrary (cyclic) operad
and DG categories. In each case, the corresponding category has a natural model structure
(in the case of DG categories, this is the model structure introduced in \cite{Ta};
for the algebras over operads, see \cite{H}), and there is a relevant version of cyclic homology
(for DG categories, this is the version of cyclic homology introduced by Keller, see \cite{K3};
for algebras over cyclic operads, cyclic homology was defined in \cite{GK}).
Of special interest is an operadic analogue of our construction for the derived spaces
of algebra structures and derived Hilbert schemes ({\it cf.} \cite{Re}, \cite{CK1}); we plan to
discuss it elsewhere.

We now proceed with a detailed summary of the contents of the paper.

In Section~\ref{S2}, we present our construction of derived representation schemes and study
its basic properties. The key idea
is to extend the representation functor \eqref{rep} from the category of commutative algebras
to the category  of all associative algebras. It turns out that the functor \eqref{rep}
is still representable on this bigger category, and quite remarkably, its representing object
has a simple and explicit algebraic construction. In Section~\ref{S2.1}, we develop this idea
in greater generality needed for the present paper: we will work in the category $ \DGA_S $
of DG algebras over a fixed DG algebra $S$, and take $V$ to be a finite DG module over the
base algebra $S$. The main results of this section, Proposition~\ref{S2P1} and Theorem~\ref{S2T1},
establish the representability of the $\Rep_V$ functor in this relative DG setting.
In Section~\ref{S2.2}, we prove our first main result, Theorem~\ref{S2T2}, which says that
the representation functor $\,(\,\mbox{--}\,)_V :\,\DGA_S \to \cDGA_k \,$ is a left Quillen functor:
i.~e., it has a total left derived functor $\, \L(\,\mbox{--}\,)_V :\,\Ho(\DGA_S) \to \Ho(\cDGA_k) \,$,
which is part of a Quillen pair. Restricting $\, \L(\,\mbox{--}\,)_V \,$ to the category of
ordinary $S$-algebras, we define the relative derived representation scheme $ \DRep_V(S \bs A) $.
Section~\ref{S2.3} describes basic functorial properties of $ \DRep_V(S \bs A) $, which are used
throughout this paper and \cite{BR}. In particular, in \ref{S2.3.4}, we derive the invariant subfunctor
$ (\,\mbox{--}\,)^{\GL(V)}_V $ of the representation functor $ (\,\mbox{--}\,)_V $. We prove
(see Theorem~\ref{S2P4}) that $ (\,\mbox{--}\,)^{\GL(V)}_V $ has a total left derived functor
isomorphic to $ \L(\,\mbox{--}\,)^{\GL(V)}_V $, however, unlike the derived representation functor itself,
this functor is {\it not} a left Quillen functor (e.g., it does not seem to have a right adjoint).
Theorem~\ref{S2T44} clarifies the relation between $ \DRep_V(A) $ and the original construction in \cite{CK}
of the derived action spaces $ {\rm RAct}(A,V) \,$: in the case when $ V $ is a single vector
space, there is a natural isomorphism $\,  \DRep_V(A) \cong k[{\rm RAct}(A,V)] \,$ in the homotopy category of
(commutative) DG algebras. This isomorphism can be viewed as a strengthening of results of \cite{CK} as
it implies (in combination with our Theorem~\ref{S2T2}) that $ {\rm RAct}(A,V) $ is actually a right Quillen
derived functor on the category of DG schemes. Finally, in Section~\ref{S2.5}, we construct an explicit model
for $ \DRep_V(A) $ starting with a given almost free resolution of $ A $. Examples in the end of Section~\ref{S2} are
meant to illustrate how simple and explicit this model really is.

In Section~\ref{S4}, we give a self-contained exposition of Feigin-Tsygan's construction of relative cyclic homology
$ \HC_{\bullet}(S \bs A) $ as a non-abelian derived functor on the category of $S$-algebras (see \cite{FT, FT1}).
Our approach is a slightly more general than that of \cite{FT, FT1}, and our proofs are different from the proofs
in those papers.
In particular, the main results of this section, Theorem~\ref{FTT} and Theorem~\ref{TPT}, are
proved in a conceptual way, using simple homotopical arguments rather than
spectral sequences as in \cite{FT, FT1}.

In Section~\ref{TraceM}, we define canonical trace maps
$ \Tr_V(S \bs A)_n:\,\HC_{n-1}(S \bs A) \to \H_n(S\bs A, V) $
relating the cyclic homology of an $S$-algebra $A$ to its representation homology.
The main result of this section, Theorem~\ref{mcor}, describes an explicit chain
map $\, T: \CC(A) \to \DRep_V(A) \,$  that induces on homology the trace maps \eqref{trr1}.
The proof is based on Quillen's realization of the cyclic complex $ \CC(A) $ as the cocommutator
subspace of the reduced bar construction of $A$ ({\it cf.} Theorem~\ref{TQ}).
In Section~\ref{S4.5}, we explain the relation between the representation homology $ \H_\bullet(A,V) $
and the Lie algebra homology of $\gl_V(A)$ mentioned earlier in the Introduction. Finally, Section~\ref{S4.7}
describes a `noncommutative' version of the trace maps \eqref{trr1} defined on Hochschild homology.

In Section~\ref{S5}, we study various operations on representation
homology induced by well-known operations on cyclic homology.
The key result is Theorem~\ref{T2}, which extends our construction of the
derived representation functor from DG algebras to DG bimodules. This
functor should be viewed as a `correct' derived version of Van den Bergh's
functor introduced in \cite{vdB, vdB1}. In Section~\ref{S5.2},
we compute the derived tangent spaces for $ \DRep_V(A) $. In Section~\ref{S5.33},
we construct an analogue of the cyclic bicomplex for representation homology
and compute the effect of the periodicity operator
$ S: \HC_n(A) \to \HC_{n-2}(A) $ and the Connes differential
$ B: \HC_n(A) \to \HH_{n+1}(A) $ on it.
In Section~\ref{S5.4} we study a structure on $ \H_\bullet(A,V) $
induced by the canonical Gerstenhaber bracket on $ \HH^{\bullet+1}(A) $.

%Finally, in Section~\ref{G.1}, we introduce the notion of
%a {\it derived Poisson structure} on an algebra $A$.
%This is a natural homological extension of $ \H_0$-Poisson
%structures introduced recently by Crawley-Boevey \cite{CB}.
%A derived Poisson structure is given by a graded (super)
%Lie algebra bracket on cyclic homology, which is the weakest
%structure on $ \HC_\bullet(A) $ that induces the usual (graded)
%Poisson brackets on representation homology.
%The proof of the main result of this section, Theorem~\ref{NCPoiss},
%is fairly long and involved, and we decided to publish it elsewhere
%                                              (see \cite{BCER}).
%In the end of Section~\ref{S5}, we give an interesting example
%of a derived Poisson structure on $ k[x,y] $ which has a natural
%geometric origin.

In Section~\ref{S4.6}, we outline a stabilization procedure for representation homology
of (augmented) algebras and explain in precise terms our answer to question \eqref{quest}.
This section is a brief announcement of the main results of \cite{BR}.
No proofs will appear here.

The paper ends with two Appendices, which we included for reader's convenience.
Our main results are proved using fundamental theorems of Quillen's theory of model categories.
Appendix~\ref{2} is a gentle introduction to this theory, where we gather together in a coherent
fashion all results needed in the body of the paper. In Appendix~\ref{3}, we recall basic facts about model
categories of DG algebras and prove some technical results
(Propositions~\ref{lhom} and~\ref{dgproper}), for which we could not find a reference in the literature.

\subsection*{Acknowledgements}{\footnotesize
We are grateful to G.~Felder, V.~Ginzburg, D.~Kaledin, B.~Keller, M.~Van den Bergh,
and especially M.~Kapranov, J.-L.~Loday and B.~Tsygan for interesting discussions and comments.
We are particularly indebted to G.~Powell who has carefully read the first version of this paper and sent us a list of questions and corrections.
We would also like to thank A.~Okounkov for a question that motivated this work, and M.~Kassabov
for sharing with us his ideas. We thank F.~Moore and G.~Felder for allowing us to use their
computer programs for {\tt Macaulay2} and for the help with actual computations. Many examples of
such computations can be found in the Ph.D. thesis of the second author (see \cite{Kha}).

The first author was partially supported by NSF grant DMS 09-01570, the second by an NSF Research Fellowship,
and the third by the Swiss National Science Foundation (Ambizione Beitrag Nr. PZ00P2-127427/1).}

\section*{Notation and Conventions}
Throughout this paper, $ k $ denotes a field of characteristic zero. An unadorned tensor product
$\, \otimes \,$ stands for the tensor product $\, \otimes_k \,$ over $k$. An algebra means an
associative $k$-algebra with $1$; the category of such algebras is denoted $ \Alg_k $.
Unless stated otherwise, all differential graded (DG) objects (complexes,
DG algebras, DG modules, etc.) are equipped with differentials of degree $-1$. The Koszul
sign convention is systematically used: whenever two DG maps (or operations) of degrees
$p$ and $q$ are permuted, the sign is multiplied by $ (-1)^{pq}$.
The categories of complexes of vector spaces, DG algebras and commutative
DG algebras over $k$ are denoted $ \Com_k $, $\,\DGA_k $ and $ \cDGA_k $, respectively.
As usual, $ \Alg_k $ will be identified with the full subcategory of
$ \DGA_k $ consisting of DG algebras with a single nonzero component in degree $0$.
If $V$ is a graded vector space, we will write $ \bL(V) $ for its graded
symmetric algebra  over $k\,$: thus $\, \bL(V) := \Sym_k(V_{\rm ev}) \otimes E_k(V_{\rm odd}) $,
where $ V_{\rm ev}$ and $ V_{\rm odd}$ are the even and the odd components of $V$.

\section{Derived Representation Schemes}
\la{S2}

\subsection{DG representation schemes}
\la{S2.1}
Let $ S \in \DGA_k $ be a DG algebra, and let $ \DGA_S $ denote the category of DG algebras over $S$.
By definition, the objects of $ \DGA_S $ are the DG algebra maps $\,S \to A \,$ in $ \DGA_k $
and the morphisms are given by the commutative triangles
\[
\begin{diagram}[small, tight]
  &       &     S     &        & \\
  &\ldTo  &           &  \rdTo &  \\
A &       & \rTo^{f}  &        & B
\end{diagram}
\]
We will write a map $\, S \to A \,$ as $\, S \bs A \,$, or simply $A$, when we regard it
as an object in $ \DGA_S $. For $ S \in \Alg_k $, we also introduce the category  $ \Alg_S $
of ordinary $S$-algebras (i.e. the category of morphisms $ S \to A $ in $\Alg_k$)
and identify it with a full subcategory of $ \DGA_S $ in the natural way.

Let $ (V, d_V) $ be a complex of $k$-vector spaces of finite (total) dimension, and
let $ \END\,V $ denote its graded endomorphism ring with  differential
$\, df = d_V f - (-1)^{i}f \,d_V \,$, where  $ f \in \END(V)_i\,$. Fix on $ V $ a
DG $S$-module structure, or equivalently, a DG representation $\,S \to \END\,V \,$.
This makes $ \END\,V $ a DG algebra over $S$, i.e. an object of $ \DGA_S $.
Now, given a DG algebra $ A \in \DGA_S $, an $S$-representation of $A$ in $V$ is, by definition,
a morphism $\,A \to \END\,V\,$ in $ \DGA_S $. Such representations form an affine DG scheme
which is defined as the functor on the category of commutative DG algebras:
\begin{equation}
\la{S2E1}
\Rep_V(S \bs A):\ \cDGA_{k} \to \Sets\ ,\quad C \mapsto \Hom_{\DGA_S}(A,\, \END\,V \otimes C)\ .
\end{equation}

To prove that \eqref{S2E1} is representable we will use a universal algebraic construction of `matrix reduction',
which (in the case of ordinary associative $k$-algebras) was introduced and studied in \cite{B} and \cite{C}. The
main advantage of this construction is that it produces the representing object for \eqref{S2E1} in a canonical form
as a result of application of some basic functors on the category of algebras.

Our starting point is the following simple observation. Denote by $ \DGA_{\End(V)} $ the category of DG algebras
over $ \END\,V \,$ and consider the natural functor
\begin{equation}
\la{S2E3}
\G:\, \DGA_{k} \to \DGA_{\End(V)}\ ,\quad B \mapsto \END\,V \otimes B \ ,
\end{equation}
where $ \END\,V \otimes B $ is viewed as an object in $ \DGA_{\End(V)} $ via the
canonical map $\,\END\,V \to \END\,V \otimes B\,$.
\blemma
\la{S2L1}
The functor \eqref{S2E3} is an equivalence of categories.
\elemma
\bproof
The inverse functor is given by
\begin{equation}
\la{S2E2}
\G^{-1}:\, \DGA_{\End(V)} \to \DGA_k\ ,\quad (\END\,V \to A) \mapsto A^{\,\END(V)}\ ,
\end{equation}
where $ A^{\,\END(V)}\,$ denotes the (graded) centralizer of the image of $ \END\,V $ in $A\,$. Indeed,
$\,\G^{-1} \circ \G\,$ being isomorphic to the identity functor on $ \DGA_k $ is obvious.
To prove that $\,\G \circ \G^{-1} \cong \id \,$ we need to show that, for any morphism
$\,f:\,\END\,V \to A\,$, the natural map
\begin{equation}
\la{S2E4}
\varphi:\, \END\,V \otimes A^{\END(V)}  \to A\ , \quad w \otimes a  \mapsto  f(w) \, a \ ,
\end{equation}
is an isomorphism. Since $ \varphi $ is obviously a morphism of DG algebras, it suffices to construct
a linear map  $\, \psi:\, A \to \END\,V \otimes A^{\,\END(V)} \,$, which is inverse to $\,\varphi\,$ as
a map of vector spaces.
Forgetting the differentials, we choose a linear basis $\,\{v_i\}\,$ in $\, V \,$ consisting of homogeneous elements,
and define the elementary endomorphisms $ \{e_{ij}\} $ in $ \END\,V $ by $ e_{ij}(v_k) = \delta_{jk} v_i $.
These endomorphisms are obviously homogeneous (the degree of $ e_{ij} $ being $\, |v_i| - |v_j|\,$)
and satisfy the relations
\begin{equation}
\la{S2E5}
\sum_{i=1}^d e_{ii} = 1 \ , \qquad e_{ij}\,e_{kl} = \delta_{jk}\, e_{il}\ ,
\end{equation}
where $ d := \dim_k V $. Now, identifying $ \END\,V $ with its image in $ A $ under the given map $ f $,
we define for each homogeneous element $ a \in A $ its `matrix' elements
\begin{equation}
\la{S2E6}
a_{ij} := \sum_{k=1}^d\, (-1)^{(|a|+|e_{ji}|) |e_{jk}|}\, e_{ki}\,a\,e_{jk} \ ,\quad i,\,j = 1,\,2,\,\ldots\,,\,d\ .
\end{equation}
Notice that each $ a_{ij} $ is homogeneous in $ A $ of degree
$\, |a_{ij}| = |a| + |e_{ji}| $. Moreover, by \eqref{S2E5}, we have
\begin{eqnarray*}
[a_{ij},\,e_{kl}] & = & a_{ij}\,e_{kl} - (-1)^{|a_{ij}||e_{kl}|}\, e_{kl}\,a_{ij}\\
&=& (-1)^{(|a|+|e_{ji}|) |e_{jk}|}\,  e_{ki}\,a\,e_{jl} -
(-1)^{(|a|+|e_{ji}|)(|e_{kl}| + |e_{jl}|)} \, e_{ki}\,a\,e_{jl} \\
&=& (-1)^{|a_{ij}||e_{jk}|} (1 - (-1)^{|a_{ij}|(|e_{kl}| + |e_{jl}| + |e_{jk}|)}) \, e_{ki}\,a\,e_{jl} \\
&=& (-1)^{|a_{ij}||e_{jk}|} (1 - (-1)^{2|a_{ij}|(|v_j| - |v_l|)}) \, e_{ki}\,a\,e_{jl} = 0
\end{eqnarray*}
for all $\,i,j, k,l = 1,2,\ldots,d\,$. Since $ \{e_{kl}\} $ span $ \END\,V $ as a vector space,
this shows that $ a_{ij} \in A^{\,\END(V)} $ for all $ a \in A $. Using \eqref{S2E6}, we can now define
the linear map
\begin{equation}
\la{S2E7}
\psi:\, A \to \END\,V \otimes A^{\END(V)}\ ,\quad a \mapsto \sum_{i,j = 1}^d\,e_{ij} \otimes a_{ij}\ .
\end{equation}
A straightforward calculation shows that $ \psi $ is indeed the inverse of $ \varphi $ as a map of
vector spaces. Since $\varphi$ is a DG algebra map, it follows that $ \psi $ is a DG algebra isomorphism.
\eproof
Next, we introduce the following functors
\begin{eqnarray}
&& \rtv{\,\mbox{--}\,}:\, \DGA_S \to \DGA_k\ ,\quad S \bs A \mapsto (\END\,V \ast_S A)^{\END(V)}\ ,\la{S2E9} \\*[1ex]
&& (\,\mbox{--}\,)_V:\, \DGA_S \to \cDGA_k\ ,\quad S \bs A \mapsto (\rtv{S \bs A})_\nn\ , \la{S2E10}
\end{eqnarray}
where $\,\ast_S \,$ denotes the coproduct in the category $ \DGA_S $ and
$\,(\mbox{--})_\nn:\,\DGA_k \to \cDGA_k \,$ stands for abelianization, i.e.
taking the quotient of a DG algebra $R$ by its two-sided commutator ideal:
$\,R_\nn := R/\langle[R, R]\rangle\,$.
\bprop
\la{S2P1}
For any $ S\bs A \in \DGA_S $, $ B \in \DGA_k $ and $ C \in \cDGA_k $, there are natural bijections

\vspace{0.8ex}

$(a)$ $\,\Hom_{\DGA_k}(\rtv{S\bs A},\,B) \cong \Hom_{\DGA_S}(A,\,\END\,V \otimes B) \,$,

\vspace{0.8ex}

$(b)$ $\, \Hom_{\cDGA_k}((S\bs A)_V,\,C) \cong \Hom_{\DGA_S}(A,\,\END\,V \otimes C) \,$.

\eprop
\begin{proof}
The tensor functor $\, B \mapsto \END\,V \otimes B \,$ in $(a)$ can be formally written as the
composition
\begin{equation}\la{compf}
\DGA_k \xrightarrow{\G} \DGA_{\End(V)} \xrightarrow{\F} \DGA_S\ ,
\end{equation}
where $\G$ is defined by \eqref{S2E3} and $ \F $ is the restriction functor via
the given DG algebra map $ S \to \END\,V $. Both
$ \F $ and $ \G $ have natural left adjoint functors: the left adjoint of
$ \F $ is obviously the coproduct $\,A \mapsto \END\,V \ast_S A \,$, while the left
adjoint of $ \G $ is $ \G^{-1} $, since $ \G $ is an equivalence of categories
(Lemma~\ref{S2L1}). Now, by definition, the functor $ \rtv{\,\mbox{--}\,} $ is the
composition of these left adjoint functors and hence the left adjoint to the
composition \eqref{compf}. This proves part $(a)$. Part $(b)$ follows from $(a)$ and
the obvious fact that the abelianization functor $\,(\mbox{--})_\nn:\,\DGA_k \to \cDGA_k \,$
is left adjoint to the inclusion $ \iota:\,\cDGA_k \into \DGA_k $.
\end{proof}
\remark\
The proof of Lemma~\ref{S2L1} shows that the DG algebra $ \rtv{S \bs A} $ has an explicit presentation:
namely, every element of $ \rtv{S \bs A} $ can be written in the form \eqref{S2E6},
where $ a \in \END\,V \ast_S A $. In this case, the map
\begin{equation}
\la{S2E8}
\psi:\, \END\,V \ast_S A \ \stackrel{\sim}{\to} \ \END\,V \otimes \rtv{S \bs A}\ ,
\quad a \mapsto \sum_{i,j = 1}^d \,e_{ij} \otimes a_{ij}
\end{equation}
yields a DG algebra isomorphism which is inverse to the canonical (multiplication) map
$$
\END\,V \otimes \rtv{S \bs A} \stackrel{\sim}{\to} \END\,V \ast_S A \ .
$$
Using \eqref{S2E8}, we can write the bijection of Proposition~\ref{S2P1}$(a)$ explicitly:
$$
\Hom_{\DGA_k}(\rtv{S\bs A},\,B) \to \Hom_{\DGA_S}(A,\, \END\,V \otimes B)\ ,\quad
f \mapsto (\id \otimes f) \circ \psi|_A\ ,
$$
where $ \psi|_A $ is the composition of \eqref{S2E8} with the canonical map $ A \to \END\,V \ast_S A \,$.

\vspace{1ex}

Part $(b)$ of Proposition~\ref{S2P1} can be restated in the following way, which shows that
$ \Rep_V(S \bs A) $ is indeed an affine DG scheme in the usual sense (see, e.g, \cite{CK}, Example~2.2.5).
\bthm
\la{S2T1}
For any $ S\bs A \in \DGA_S $, the commutative DG algebra $ (S \bs A)_V  $ represents the functor \eqref{S2E1}.
\ethm

Notice that if $S$ is an ordinary algebra and $V$ is a complex consisting of a single $S$-module in degree $0$,
the functors \eqref{S2E9} and \eqref{S2E10} restrict to
\begin{eqnarray}
&& \rtv{\,\mbox{--}\,}:\, \Alg_S \to \Alg_k\ ,\quad S \bs A \mapsto (\End\,V \ast_S A)^{\End(V)}\ ,\la{S2E11} \\*[1ex]
&& (\,\mbox{--}\,)_V:\, \Alg_S \to \cAlg_k\ ,\quad S \bs A \mapsto (\rtv{S \bs A})_\nn\  \la{S2E12}\ ,
\end{eqnarray}
and Proposition~\ref{S2P1} yields canonical isomorphisms
\begin{eqnarray}
&&  \Hom_{\Alg_k}(\rtv{S\bs A},\,B) \cong \Hom_{\Alg_S}(A,\,\End\,V \otimes B)\ ,\la{S2E11'} \\*[1ex]
&&  \Hom_{\cAlg_k}((S\bs A)_V,\,C) \cong \Hom_{\Alg_S}(A,\,\End\,V \otimes C)\ . \la{S2E12'}
\end{eqnarray}

In particular, when $ S = k $, the commutative algebra $ A_V := (k \bs A)_V $ represents the functor \eqref{rep}.

\vspace{1ex}

\remark\
For ordinary $k$-algebras, Proposition~\ref{S2P1} and Theorem~\ref{S2T1} were originally proven in \cite{B} (Sect.~7)
and \cite{C} (Sect.~6). In these papers, the functor \eqref{S2E11} was called the `matrix reduction' and a
different notation was used. Our notation $\, \rtv{\mbox{--}} \,$ is borrowed from \cite{LBW}, where
\eqref{S2E11} is used for constructing noncommutative thickenings of classical representation schemes.

\subsubsection{Generalizations}
\la{S2.1.1}
The above results can be extended to the relative case when $k$ is replaced by an arbitrary algebra.
Although we will not need it in the present paper, we will briefly outline this generalization.

Let $ R  \in \Alg_k $ be an algebra, and let $ V $ be a perfect complex over $R$
(i.~e., a bounded complex consisting of f.~g. right projective $R$-modules). Denote by
$ E := \END_R(V) \,$ the DG subalgebra of $ \END\,V $ consisting of $R$-linear endomorphisms
and define the functor ({\it cf.} \eqref{S2E3})
\begin{equation*}
\la{S2E3'}
\G_R:\, \DGA_{R} \to \DGA_{E}\ ,\quad B \mapsto \END_B(V \otimes_R B) = V \otimes_R B \otimes_R V^* \ ,
\end{equation*}
where $\,V^* := \HOM_R(V,\,R) \,$ is the dual complex of $V$ over $R$.

\blemma
\la{S2L1'}
Assume that $\, V^* \otimes_{E} V \cong R\,$.  Then $ \G_R $ is an equivalence of categories.
\elemma
\noindent
The inverse functor is given by
$\,
\G_R^{-1}:\,  \DGA_{E} \to \DGA_R\ ,\quad A \mapsto V^* \otimes_{E} A \otimes_{E} V\,$.
The generalization of Proposition~\ref{S2P1} now reads as follows.
\bprop
\la{S2P1'}
Let $ S $ be a DG algebra, and let $ S \to \END_R(V) $ be a DG algebra homomorphism making
$V$ a DG $S$-$R$-bimodule. Then, for any $ S \bs A \in \DGA_S $ and $ B \in \DGA_R $,
there is a functorial isomorphism
$$
\Hom_{\DGA_R}(\sqrt[V]{S\bs A/ R},\, B) \cong \Hom_{\DGA_S}(A,\, \END_B(V \otimes_R B))\ ,
$$
where $\,\sqrt[V]{S \bs A /R} :=  V^* \otimes_{E} (E \ast_S A) \otimes_{E} V\,$.
\eprop
More generally, Lemma~\ref{S2L1'} and Proposition~\ref{S2P1'} hold if we take $ R $ to
be an arbitrary DG algebra, and $ V $ a finite {\it semifree} DG module over $R$.
Our proofs given in the simplest case $ R = k $ work, {\it mutatis mutandis}, in general.
We leave the details as an exercise to the reader.

\subsection{Deriving the representation functor}
\la{S2.2}
The category $ \DGA_k $ and its subcategory $ \cDGA_k $ have natural Quillen model
structures, where the weak equivalences are the quasi-isomorphisms and the fibrations are
the degreewise surjective maps. We refer the reader to the Appendices
in the end of the paper, where we review basic results from the theory of model categories (Appendix~A)
and properties of model structures on categories of DG algebras (Appendix~B). Here, we only recall
that, for a fixed $ S \in \DGA_k $, the category $ \DGA_S $ inherits a model structure from
$ \DGA_k $, in which a morphism $\,f: S \bs A \to S \bs B \,$ is a weak equivalence (resp.,
fibration; resp., cofibration) iff $ f: A \to B $ is a weak equivalence (resp.,
fibration; resp., cofibration) in $ \DGA_k $ ({\it cf.} \ref{2.1.2}).
Every DG algebra $\, S \bs A \in \DGA_S \,$ (in particular, an ordinary algebra in $ \Alg_S$)
has a cofibrant resolution $\, Q(S \bs A) \sonto S \bs A \,$ in $ \DGA_S $, which is given by a
factorization $\,S \into Q \sonto A \,$ of the morphism $ S \to A $ in $ \DGA_k $.
Replacing DG algebras by their cofibrant resolutions one defines the homotopy category
$ \Ho(\DGA_S) $, in which the morphisms are given by the homotopy classes of morphisms between
cofibrant objects in $ \DGA_S $. The category
$ \Ho(\DGA_S) $ is equivalent to the (abstract) localization of $ \DGA_S $ at the class
of weak equivalences in $ \DGA_S $ ({\it cf.} Theorem~\ref{Tloc}).
We denote the corresponding localization functor by $\,\gamma:\,\DGA_S \to \Ho(\DGA_S)\,$;
by definition, $ \gamma $ acts as identity on the objects while maps each morphism
$ f: S \bs A \to  S \bs B $ to the homotopy class of its cofibrant lifting
$ \tilde{f}: Q(S \bs A) \to Q(S \bs B) $  in $\DGA_S \,$, see \ref{2.2.4}.

We are now in position to state one of the main results of this paper.
\bthm
\la{S2T2}
$(a)$ The functors $\,(\,\mbox{--}\,)_V :\,\DGA_S \rightleftarrows \cDGA_k\,: \END\,V \otimes {\mbox{--}}\,$
form a Quillen pair.

$(b)$ $\, (\,\mbox{--}\,)_V $ has a total left derived functor defined by
$$
\L(\,\mbox{--}\,)_V:\,\Ho(\DGA_S) \to \Ho(\cDGA_k) \ ,
\quad S \bs A  \mapsto  Q(S \bs A)_V\ , \quad \gamma f \mapsto \gamma(\tilde f_V)\ .
$$

$(c)$ For any $\, S \bs A \in \DGA_S \,$ and $ B \in \cDGA_k $, there is a canonical isomorphism
\begin{equation*}
\la{S2E15}
\Hom_{\Ho(\cDGA_k)}(\L(S \bs A)_V,\, B) \cong
\Hom_{\Ho(\DGA_S)}(A,\,\END\,V \otimes B)\ .
\end{equation*}
\ethm
\bproof
By Proposition~\ref{S2P1}$(b)$, the functor $\,(\,\mbox{--}\,)_V \,$ is left adjoint to the composition
$$
\cDGA_k \stackrel{\iota}{\into} \DGA_k \xrightarrow{\END\,V \otimes\,-\,} \DGA_S \ ,
$$
which we still denote $\,\END\,V \otimes {\mbox{--}}\ $.
Both the forgetful functor $ \iota $ and the tensoring with
$ \END\,V $ over a field are exact functors on $ \Com_k $; hence, they map fibrations (the surjective morphisms in $ \DGA_k $) to fibrations and also preserve the class of weak
equivalences (the quasi-isomorphisms). It follows that $\,\END\,V \otimes {\mbox{--}}\,$ preserves fibrations as well as acyclic
fibrations. Thus, by Lemma~\ref{Qpair}, $\,(\,\mbox{--}\,)_V :\,\DGA_S \rightleftarrows \cDGA_k\,: \END\,V \otimes {\mbox{--}}\,$ is a Quillen pair. This proves part $(a)$. Part $(b)$ and $(c)$ now follow
directly from Quillen's Fundamental Theorem (see Theorem~\ref{Qthm}).
For part $(c)$, we need only to note that $\,G := \END\,V \otimes {\mbox{--}} \,$ is an exact functor in Quillen's sense,
i.e. $ \R G = G $, since $ \cDGA_k $ is a fibrant model category.
\eproof

\begin{definition}
\la{S2D1}
By Theorem~\ref{S2T2}, the assignment $\, S\bs A \mapsto Q(S\bs A)_V \,$ defines a functor
$$
\DRep_V:\ \Alg_S \to \Ho(\cDGA_k)
$$
which is independent of the choice of resolution $ Q(S\bs A) \sonto S\bs A $ in $ \DGA_S $. Abusing terminology, we call
$ \DRep_V(S \bs A) $ a relative {\it derived representation scheme} of $ A $. The homology of
$ \DRep_V(S \bs A) $ is a graded commutative algebra, which depends only on $ S \bs A$ and $V$. We write
\begin{equation}
\la{S2E13}
\H_\bullet(S \bs A,\,V) := \H_\bullet[\DRep_V(S\bs A)]
\end{equation}
and refer to \eqref{S2E13} as {\it representation homology} of $S \bs A$ with coefficients in $V$. In the absolute case
when $ S = k $, we simplify the notation writing $ \DRep_V(A) := \DRep_V(k \bs A) $ and
$ \H_\bullet(A,\,V) := \H_\bullet(k \bs A,\,V) $.
\end{definition}

\vspace{1ex}

We now make a few remarks related to Theorem~\ref{S2T2}.

\subsubsection{}
\la{S2.2.2}
For any cofibrant resolutions $\,p: Q(S\bs A) \sonto S\bs A \,$ and $\,p': Q'(S\bs A) \sonto S\bs A \,$
of a given $ S \bs A \in \DGA_S $, there is a quasi-isomorphism $\,f_V:\,Q(S\bs A)_V
\stackrel{\sim}{\to} Q'(S\bs A)_V \,$ in $ \cDGA_k $. Indeed, by \ref{2.2.4}, the identity map on $A$
lifts to a morphism $ f: Q(S\bs A) \stackrel{\sim}{\to} Q'(S\bs A) $ such that $\,p'\,f = p\,$.
This morphism is automatically a weak equivalence in $ \DGA_S$, so $ \gamma f $ is an
isomorphism in $ \Ho(\DGA_S) $. It follows that $ \L(\gamma f)_V $ is an isomorphism in
$ \Ho(\cDGA_k)$. But $ Q(S\bs A) $ and $ Q'(S\bs A) $ are both cofibrant objects, so $ \L(\gamma f)_V =
\gamma(f_V) $ in $ \Ho(\cDGA_k) $. Thus $ f_V $ is a quasi-isomorphism in $\cDGA_k$.

\subsubsection{}
\la{S2.2.3}
The analogue of Theorem~\ref{S2T2} holds for the pair of functors
$\,
\rtv{\,\mbox{--}\,} :\ \DGA_S \rightleftarrows \DGA_k\,: \ \END\,V \otimes {\mbox{--}}
\,$,
which are adjoint to each other by Proposition~\ref{S2P1}$(a)$. Thus, $\, \rtv{\,\mbox{--}\,} \,$ has
the left derived functor
\begin{equation*}
\la{S2E15'}
\L \rtv{\,\mbox{--}\,}:\, \Ho(\DGA_S) \to \Ho(\DGA_k) \ ,\quad \L \rtv{S \bs A} := \rtv{Q(S \bs A)}\ ,
\end{equation*}
which is left adjoint to $\,\END\,V \otimes {\mbox{--}}\,$ on the homotopy category $ \Ho(\DGA_k) $.

\subsubsection{}
\la{S2.2.4}
If $V$ is a complex concentrated in degree $0$, the functors
$ (\,\mbox{--}\,)_V $ and $\,\End\,V \otimes \mbox{--} \,$ restrict to the category of
{\it non-negatively} graded DG algebras and still form the adjoint pair
\begin{equation*}
\la{plus}
(\,\mbox{--}\,)_V :\,\DGA^+_S \rightleftarrows \cDGA^+_k\,: \End\,V \otimes {\mbox{--}}\quad .
\end{equation*}
The categories $ \DGA_S^+ $ and $ \cDGA_k^+ $ have natural model structures (see Theorem~\ref{modax1}), for which
all the above results, including  Theorem~\ref{S2T2}, hold, with proofs being identical to the unbounded case.
Note that, by Proposition~\ref{3.2for}, a cofibrant resolution of $ S \bs A  $ in $  \DGA_S^+ $
is also a cofibrant resolution of $ S \bs A $ as an object in $ \DGA_S $. This implies that the derived functor of
the restriction $ (\,\mbox{--}\,)_V $ to $ \DGA_S^+ $ agrees with the restriction of the derived functor
$ \L(\,\mbox{--}\,)_V $ to $ \Ho(\DGA_S^+) \into \Ho(\DGA_S) $.
The advantage of working in $ \DGA_S^+ $ is that the cofibrant resolutions in this category can be chosen to be almost free extensions of the base DG algebra $S$. Since $ \DGA_S^+ $ contains $ \Alg_S $, using this kind of
resolutions suffices for applications. When dealing with ordinary algebras, we will thus often
restrict our DG representation functors to $ \DGA_S^+ $.

\subsection{Basic properties of $ \DRep_V(S \bs A)$}
\la{S2.3}

\subsubsection{}
\la{S2.3.1}
We begin by clarifying how $ \DRep_V $ depends on $V$.
Let $ \DGMod(S) $ be the category of DG modules over $S$, and let $V$ and $W$ be two modules in
$ \DGMod(S) $ each of which has finite dimension over $k$.

\bprop
\la{S2P3}
If $\,V$ and $W$ are quasi-isomorphic in $\DGMod(S)$, the corresponding derived functors
$ \L (\,\mbox{--}\,)_V $ and $ \L (\,\mbox{--}\,)_W :  \Ho(\DGA_S) \to \Ho(\cDGA_k) $ are naturally equivalent.
\eprop
The proof of this proposition is based on the following lemma, which is probably known to the experts.
\blemma
\la{S2L3}
Let $V$ and $W$ be two bounded DG modules over $S$, and assume that there is a quasi-isomorphism $\,f: V \stackrel{\sim}{\to} W \,$
in $\DGMod(S)$. Then the DG algebras $ \END\,V $ and $ \END\,W $ are weakly equivalent in $ \DGA_S $, i.e. isomorphic in $ \Ho(\DGA_S) $.
\elemma
\bproof
Write $ C := \Cyl(f) $ for the mapping cylinder of $\,f\,$: by definition, this is the graded $S$-module
$\, V \oplus V[1] \oplus W \,$ equipped with differential
$$
d(v_i, v_{i-1}, w_i) := (d_V(v_i) + v_{i-1}\,,\,- d_V(v_{i-1})\, ,\, d_W(w_i) - f(v_{i-1}))\ ,
$$
where $\,(v_i, v_{i-1}, w_i) \in V \oplus V[1] \oplus W \,$.
Both $V$ and $W$ naturally embed in $ C $ as DG modules. The embedding $\,W \into C \,$ is always a quasi-isomorphism,
while $ V \into C $ is a quasi-isomorphism whenever $f$ is a quasi-isomorphism (see \cite{GM}, Lemma~III.3.3).

Now, consider the DG algebra $ \END\,C $, and let $ \END(V)^{\sim} $ denote its DG subalgebra
consisting of endomorphisms that preserve the image of $ V $ in $ C $.  We claim that the natural maps
\begin{equation}
\la{nmaps}
\END\,V \,\stackrel{\sim}{\leftarrow}\, \END(V)^{\sim} \, \stackrel{\sim}{\into}\, \END\,C\ .
\end{equation}
are quasi-isomorphisms of DG $S$-algebras. Indeed, since the inclusion $ V \into C $ is a map of DG modules over $S$,
both arrows in \eqref{nmaps} are DG algebra maps over $S$. To see that these are actually quasi-isomorphisms
consider the commutative diagram of complexes
\begin{equation}
\la{dmaps}
\begin{diagram}[small, tight]
        &           &     \HOM(C,V)     &             & \\
        &\ldOnto    &       \dInto      &  \rdInto    &  \\
\END\,V & \lTo      & \END(V)^{\sim}    &  \rInto       & \END\,C\ ,
\end{diagram}
\end{equation}
induced by $ V \into C $.  Since $ V \into C $ is a quasi-isomorphism, its cokernel
$ C\!/V $ is acyclic, and therefore the endomorphism ring $ \END(C\!/V) $ is acyclic
({\it cf.} \cite{FHT}, Prop.~2.3(ii)). It follows from the natural exact sequence
$$
0 \to \HOM(C,V) \to \END(V)^{\sim} \to \END(C\!/V) \to 0
$$
that the vertical arrow in \eqref{dmaps} is a quasi-isomorphism. A similar argument shows the other two arrows in \eqref{dmaps}
starting from $ \HOM(C,\,V) $ are also quasi-isomorphisms. By commutativity of the diagram, the two
horizontal arrows in \eqref{dmaps}, which are the maps \eqref{nmaps}, must then be quasi-isomorphisms as well.

In a similar fashion, replacing $V$ by $W$, we get
\begin{equation}
\la{nmaps1}
\END\,W \,\stackrel{\sim}{\leftarrow}\, \END(W)^{\sim} \, \stackrel{\sim}{\into}\, \END\,C\ .
\end{equation}
From \eqref{nmaps} and \eqref{nmaps1}, we see that $ \END\,V \cong \END\,C \cong \END\,W $ in $ \Ho(\DGA_S) $.
\eproof
\begin{proof}[Proof of Proposition~\ref{S2P3}]
By Theorem~\ref{S2T2}$(c)$, the functors $\L (\,\mbox{--}\,)_V $ and $ \L (\,\mbox{--}\,)_W\,$
are left adjoint to the functors $\, \END\,V \otimes {\mbox{--}} \,$ and
$\, \END\,W \otimes {\mbox{--}} \,$ on the homotopy category $\, \Ho(\cDGA_k)\,$.
By Lemma~\ref{S2L3}, these last two functors are isomorphic if $V$ and $W$ are
quasi-isomorphic.
Hence, by uniqueness of the adjoint functors, $\,\L (\,\mbox{--}\,)_V $ and $ \L (\,\mbox{--}\,)_W\,$
are isomorphic as functors on $ \Ho(\DGA_S)$.
\end{proof}
As an immediate consequence of Proposition~\ref{S2P3}, we get
\begin{corollary}
\la{S2C1}
If $ V $ and $ W $ are quasi-isomorphic $S$-modules, then
$\,\DRep_V(S \bs A) \cong \DRep_W(S \bs A) \,$ for any algebra
$\, S \bs A\in \Alg_S $. In particular, $\,\H_\bullet(S \bs A,V)
\cong \H_\bullet(S \bs A,W)\,$ as graded algebras.
\end{corollary}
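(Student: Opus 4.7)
The plan is to read the corollary as a direct unpacking of Proposition~\ref{S2P3} together with Definition~\ref{S2D1}. First I would recall that, by Definition~\ref{S2D1}, $\DRep_V(S\bs A)$ is defined as the value $\L(-)_V(S\bs A) = Q(S\bs A)_V$ of the total left derived functor produced in Theorem~\ref{S2T2}, viewed as an object of $\Ho(\cDGA_k)$; the same definition applies verbatim with $W$ in place of $V$. Since Proposition~\ref{S2P3} provides a natural equivalence $\L(-)_V \simeq \L(-)_W$ of functors $\Ho(\DGA_S) \to \Ho(\cDGA_k)$ whenever $V$ and $W$ are quasi-isomorphic DG $S$-modules, evaluating this natural isomorphism at the object $S\bs A \in \Alg_S \subset \Ho(\DGA_S)$ immediately yields an isomorphism $\DRep_V(S\bs A) \cong \DRep_W(S\bs A)$ in $\Ho(\cDGA_k)$.

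For the second statement, I would then observe that two objects isomorphic in $\Ho(\cDGA_k)$ are represented by quasi-isomorphic commutative DG $k$-algebras. Applying the homology functor $\H_\bullet\colon \cDGA_k \to \gcAlg_k$, which sends quasi-isomorphisms to isomorphisms of graded commutative algebras, delivers the desired isomorphism $\H_\bullet(S\bs A, V) \cong \H_\bullet(S\bs A, W)$. There is no genuine obstacle here: the corollary is a formal consequence of Proposition~\ref{S2P3}, and the only step one might worry about is that $S\bs A$ really does lie in the domain of both derived functors, which is automatic since $\Alg_S$ sits inside $\DGA_S$ and hence descends to $\Ho(\DGA_S)$ via the localization functor $\gamma$.
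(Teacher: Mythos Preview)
Your proposal is correct and matches the paper's approach exactly: the paper simply states that the corollary is ``an immediate consequence of Proposition~\ref{S2P3}'' without further elaboration, and you have spelled out precisely why. The only point you might add for completeness is that the finite-dimensionality hypothesis on $V$ and $W$ needed for Proposition~\ref{S2P3} is implicit in the standing assumptions of the section.
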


\subsubsection{Base change}
\la{S2.4}
If $S $ is an ordinary algebra, and $f: S \to A $ is a fixed morphism in $ \Alg_k $, the classical
(relative) representation scheme $ \Rep_V(S\bs A) $ can be identified with the fibre of the
restriction map $\,f_*:\, \Rep_V(A) \to \Rep_V(S) \,$ over a given representation $\,\varrho:\,
S \to \End\,V \,$. Dually, this means that the coordinate algebra $ (S \bs A)_V $ of
$ \Rep_V(S\bs A) $ fits in the commutative diagram
\begin{equation}
\la{drepd1}
\begin{diagram}[small, tight]
S_V              &  \rTo^{f_V}        & A_V\\
\dTo^{\varrho_V} &                    & \dTo \\
k                &  \rInto            & (S\bs A)_V
\end{diagram}
\end{equation}
which is a universal cocartesian square in $ \cAlg_k $.
We want to give a similar universal characterization for the derived representation scheme $ \DRep_V(S\bs A) $.
%
%To simplify matters we will assume that $V$ is a $0$-complex and work in the category $ \DGA_k^+$
%of non-negatively graded DG algebras ({\it cf.} Remark~\ref{S2.2.4}).
%

Let $\, R \xrightarrow{g} S \xrightarrow{f} A\,$ be morphisms in $\DGA_k$. Fix a DG representation
$\,\varrho:\, S \to \END\,V $, and let $\,\varrho_R := \varrho \circ g\,$. Using $ \varrho $
and $ \varrho_R $, define the representation functors $\,(S \bs\,\mbox{--}\,)_V:\,\DGA_S \to \cDGA_k\,$,
and $\, (R\bs\,\mbox{--}\,)_V :\,\DGA_R \to \cDGA_k \,$ and consider the corresponding derived functors
$ \L(S\bs\,\mbox{--}\,)_V $ and $ \L(R\bs\,\mbox{--}\,)_V $.
\blemma
\la{S2L9}
The commutative diagram
\begin{equation}
\la{drepd11}
\begin{diagram}[small, tight]
(R\bs S)_V              &  \rTo^{\,(R\bs f)_V\,}            & (R\bs A)_V\\
\dTo^{(R\bs \varrho)_V} &                                   & \dTo \\
k                       &  \rInto                           & (S\bs A)_V
\end{diagram}
\end{equation}
is a cocartesian square in $ \cDGA_k $.
\elemma
\bproof
To simplify the notation set $ \C := \cDGA_k $ and denote by $ \C^\D $ the category of diagrams
of shape $ \{\ast \leftarrow \ast \rightarrow \ast \}\,$ in $ \C $. The pushout construction
defines a functor $\, \colim:\,\C^\D \to \C \,$ which is left adjoint to the diagonal functor
$ \Delta:\,\C \to \C^\D $ ({\it cf.} Example~\ref{colim}). We need to show that
\begin{equation}
\la{coliso}
(S\bs A)_V \cong \colim[\,k \xleftarrow{\,\varrho_V\,} (R\bs S)_V  \xrightarrow{\,f_V\,} (R\bs A)_V \,]\ .
\end{equation}
To this end, we denote the above colimit by $ \colim(\varrho_V,\,f_V) $,
choose an arbitrary $ C \in \cDGA_k $ and identify
\begin{eqnarray*}
\Hom_{\C}(\colim(\varrho_V,\,f_V),\,C) \,\, & \cong & \Hom_{\C^{\D}}((\varrho_V,\,f_V),\,\Delta C)\\
                          &\cong  & \{\alpha \in \Hom_{\C}((R \bs A)_V,\,C)\ :\ \alpha\,f_V = \varrho_V\}\\
                          &\cong& \{\tilde{\alpha} \in \Hom_{\DGA_R}(A,\,\End\,V \otimes C)\ :\ \tilde{\alpha}\,f = \varrho\}\\
                          &\cong& \Hom_{\DGA_S}(A,\,\End\,V \otimes C) \\
                          &\cong& \Hom_{\cDGA_k}((S \bs A)_V,\,C)\ ,
\end{eqnarray*}
where the third and the last bijections are given by Proposition~\ref{S2P1}$(b)$.
The isomorphism \eqref{coliso} follows now by Yoneda's Lemma.
\eproof

\begin{theorem}
\la{hopushout}
There is a commutative diagram in $ \Ho(\cDGA_k) $
\begin{equation}
\la{drepd2}
\begin{diagram}[small, tight]
\L(R\bs S)_V           &  \rTo^{\,\L(R\bs f)_V\,}        & \L(R\bs A)_V\\
\dTo^{\L(\varrho)_V} &                    & \dTo \\
k                &  \rInto            & \L(S\bs A)_V
\end{diagram}
\end{equation}
which is universal in the sense that $ \L(S\bs A)_V $ is
the homotopy cofibre of $ (R\bs f)_V $.
\end{theorem}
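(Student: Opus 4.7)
The strategy is to realize the square \eqref{drepd2} as the image under the left Quillen functor $(R\bs\,\mbox{--}\,)_V$ of a strict pushout of cofibrant objects in $\DGA_R$, and then identify the four corners with their derived counterparts. First, I choose a cofibrant resolution $R \into P \sonto S$ of $S$ in $\DGA_R$, and then factor the composite $P \to S \xrightarrow{f} A$ in $\DGA_R$ as a cofibration followed by an acyclic fibration $P \into Q \sonto A$. By construction $Q$ is a cofibrant resolution of $A$ in $\DGA_R$, and $P \into Q$ is a cofibration between cofibrant objects of $\DGA_R$.

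Next, applying Lemma \ref{S2L9} to the chain $R \to P \to Q$ in $\DGA_k$ (equipped with the lifted representation $\tilde\varrho: P \sonto S \xrightarrow{\varrho} \END\,V$) produces a strict pushout square in $\cDGA_k$:
\[
\begin{diagram}[small, tight]
(R\bs P)_V & \rTo   & (R\bs Q)_V \\
\dTo       &        & \dTo       \\
k          & \rInto & (P\bs Q)_V
\end{diagram}
\]
Since $(R\bs\,\mbox{--}\,)_V:\DGA_R \to \cDGA_k$ is a left Quillen functor by Theorem~\ref{S2T2}$(a)$, it sends the cofibration $P \into Q$ to a cofibration $(R\bs P)_V \into (R\bs Q)_V$ between cofibrant objects of $\cDGA_k$. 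Hence the strict pushout above is also a homotopy pushout.

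It remains to identify the corners with their derived counterparts. By construction $(R\bs P)_V$ and $(R\bs Q)_V$ model $\L(R\bs S)_V$ and $\L(R\bs A)_V$ respectively, and the vertical arrow on the left becomes $\L(\varrho)_V$. The main obstacle is the identification $(P\bs Q)_V \simeq \L(S\bs A)_V$, since on the left the representation functor is taken over $P$ rather than $S$. I address this by forming the pushout $\widetilde Q := Q \sqcup_P S$ in $\DGA_k$: since $P \into Q$ is a cofibration and $P \sonto S$ is a weak equivalence, left properness of $\DGA_k$ (Proposition~\ref{dgproper}) gives a weak equivalence $Q \sonto \widetilde Q$, and two-out-of-three yields a weak equivalence $\widetilde Q \sonto A$; moreover, $S \into \widetilde Q$ is a cofibration (being the pushout of the cofibration $P \into Q$ along $P \sonto S$), so $\widetilde Q$ is a cofibrant resolution of $A$ in $\DGA_S$. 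Thus $\L(S\bs A)_V \cong (S\bs \widetilde Q)_V$ in $\Ho(\cDGA_k)$. Finally, the universal property of the pushout together with Proposition~\ref{S2P1}$(b)$ yields a canonical isomorphism $(S\bs \widetilde Q)_V \cong (P\bs Q)_V$ in $\cDGA_k$. Combining all these identifications with the homotopy pushout exhibited above, we conclude that \eqref{drepd2} is indeed a homotopy pushout square, i.e. $\L(S\bs A)_V$ is the homotopy cofibre of $\L(R\bs f)_V$, as asserted.
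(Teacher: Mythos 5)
Your proof is correct and follows essentially the same route as the paper's: both arguments construct a cofibration $P \into Q$ between cofibrant resolutions of $S$ and $A$ over $R$ (via \cite{FHT}, Prop.~3.1), form the pushout $\widetilde Q = Q \sqcup_P S$ and use properness of $\DGA_k$ to see that it resolves $A$ over $S$, and then combine the strict-pushout adjunction identity of Lemma~\ref{S2L9} with the left Quillen property of $(R\bs\,\mbox{--}\,)_V$ and Proposition~\ref{prmcat}. The only difference is organizational: the paper applies Lemma~\ref{S2L9} to $R \to S \to A$ and then derives the resulting colimit formula, whereas you apply it to the resolved triple $R \to P \to Q$ and identify $(P\bs Q)_V \cong (S\bs \widetilde Q)_V$ directly by a Yoneda argument.
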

\begin{proof}
By Lemma~\ref{S2L9}, there is an isomorphism of functors $\, \DGA_S \to \cDGA_k \,$:
\begin{equation}\la{lccol0}
(S\bs \, \mbox{--}\,)_V  \cong \colim[\,k \xleftarrow{} (R\bs S)_V
\xrightarrow{\,(R\bs\, \mbox{--}\,)_V\,} (R\bs \, \mbox{--}\,)_V \,]\ .
\end{equation}
By uniqueness of the derived functors, \eqref{lccol0} induces an isomorphism of
functors $\, \Ho(\DGA_S) \to \Ho(\cDGA_k)
\,$:
\begin{equation}\la{lccol}
\L(S\bs \, \mbox{--}\,)_V  \cong \L\colim[\,k \xleftarrow{\,\varrho_V\,} (R\bs S)_V
\xrightarrow{\,(R\bs\, \mbox{--}\,)_V\,} (R\bs \, \mbox{--}\,)_V \,] \ .
\end{equation}
Now, to compute $\,\L \colim \,$ in \eqref{lccol} we should replace $ f:\, S \to A $ by
its cofibrant resolution $ i:\, S \to Q(S\bs A) $ in $ \DGA_S $; then
\begin{equation}
\la{lccol1}
\L\colim[\,k \xleftarrow{\,\varrho_V\,} (R\bs S)_V  \xrightarrow{\,f_V\,} (R\bs A)_V \,]
\cong \gamma\,\colim[\,k \xleftarrow{\,\varrho_V\,} (R \bs S)_V  \xrightarrow{\,i_V\,} (R\bs Q(S \bs A))_V\,]\ ,
\end{equation}
where the last `$ \colim $' is taken in $ \cDGA_k $ and $\,\gamma: \cDGA_k \to \Ho(\cDGA_k) \,$.

We will use a specific resolution $ i:\, S \to Q(S\bs A) $ in \eqref{lccol1}, which we construct in the following way.
First, we choose a cofibrant resolution $\,p_{R\bs S}:\, Q(R\bs S) \sonto S $ of $ S $ in $\,\DGA_R\,$.
Then, by \cite{FHT}, Prop.~3.1, there is a cofibration
$\,\tilde{f}:\, Q(R\bs S) \into  Q(R \bs A) \,$ in $ \DGA_k $ and a surjective quasi-isomorphism
$\,p_{R\bs A}:\, Q(R\bs A) \sonto A $ such that $\, p_{R\bs A} \circ \tilde{f} =
f \circ p_{R\bs S} \,$. We take $ p_{R\bs A} $ to be a cofibrant resolution of $ A $ in $ \DGA_R$.
Now, we define $\, i: S \to  Q(S\bs A) $ by pushing out $ \tilde{f} $ along $ p_{R\bs S} $ in $ \DGA_R \,$:
\begin{equation}
\la{drepd3}
\begin{diagram}[small, tight]
Q(R \bs S)                  &  \rInto^{\ \tilde{f}\ }           & Q(R \bs A) \\
\dOnto^{\,p_{R \bs S}\,}   &                                   & \dTo^{\tilde{\varphi}} \\
S                           &  \rInto^{i\ }                     & Q(S\bs A)
\end{diagram}
\end{equation}
By the universal property of pushouts, there is a (unique) morphism $\,p_{S \bs A}:\,Q(S\bs A) \to A \,$
in $ \DGA_R $ such that $\,p_{S \bs A} \circ i = f \,$ and  $\,p_{S\bs A} \circ \tilde{\varphi} = p_{R\bs A}\,$.
We claim that $ p_{S \bs A} $ is a cofibrant resolution of $ f $ in $ \DGA_S $. Indeed,
by Lemma~\ref{popb}, $\,i\,$ is a cofibration in $ \DGA_k $, so $ Q(S \bs A) $ is a cofibrant object
in $ \DGA_S $. On the other hand,  $\, \tilde{\varphi} \,$  is a pushout of a quasi-isomorphism
along a cofibration in $ \DGA_k $, hence, by Proposition~\ref{dgproper}, this map is
a quasi-isomorphism as well. Finally, $\,p_{S\bs A} \circ \tilde{\varphi} = p_{R\bs A}\,$ implies that
$ p_{S\bs A} $ is a surjective quasi-isomorphism, since so is $ p_{R \bs A} $.

Now, since $\,(R\bs \, \mbox{--}\,)_V: \DGA_R \to \cDGA_k\,$ is a left adjoint functor,
it preserves colimits: hence, it follows from \eqref{drepd3} that
\begin{equation}
\la{drepd33}
(R \bs Q(S\bs A))_V  \cong \colim[\,(R\bs S)_V \xleftarrow{} Q(R\bs S)_V \xrightarrow{\tilde{f}_V} Q(R\bs A)_V \,]
\end{equation}
On the other hand, since $\,(R\bs \, \mbox{--}\,)_V \,$ is a left Quillen functor, it preserves cofibrations:
hence $ \tilde{f}_V $ in \eqref{drepd33} is a cofibration in $ \cDGA_k $ since so is $ \tilde{f} $ in \eqref{drepd3}.
Combining \eqref{lccol}, \eqref{lccol1} and \eqref{drepd33}, we get
\begin{eqnarray*}
\L(S\bs A\,)_V
&\cong&
\gamma\,\colim[\,k \xleftarrow{ \tilde{\varrho}_V } Q(R\bs S)_V  \xrightarrow{\tilde{f}_V} Q(R\bs A)_V \,] \nonumber \\*[1ex]
&\cong&
\colim[\,k \xleftarrow{ \gamma \tilde{\varrho}_V } \gamma\,Q(R\bs S)_V
\xrightarrow{\gamma \tilde{f}_V} \gamma\, Q(R\bs A)_V \,] \la{drepd34} \\*[1ex]
&\cong &
\colim[\,k \xleftarrow{\,\L(\varrho)_V\,} \L(R\bs S)_V  \xrightarrow{\,\L(f)_V\,} \L(R\bs A)_V \,] \la{drepd35} \ ,
\end{eqnarray*}
where the second isomorphism is a consequence of Proposition~\ref{prmcat}, and the third holds by Theorem~\ref{S2T2}.
Note that Proposition~\ref{prmcat} applies in our situation, since $ \cDGA_k $ is a proper model category
({\it cf.} Proposition~\ref{dgproper}).
\end{proof}

Let us state the main corollary of Theorem~\ref{hopushout}, which may be viewed
as an alternative definition of $ \DRep_V(S\bs A) $. This corollary shows that our
construction of relative $ \DRep_V $ is a `correct' one from homotopical point of view
({\it cf.} \cite{Q2}, Part~I, 2.8).
\begin{corollary}
\la{hlimr}
For any $ (S \xrightarrow{f} A) \in \Alg_S $, $\,\DRep_V(S\bs A) \,$ is the homotopy
cofibre of $\, f_V \,$, {\it i.e.}
\begin{equation*}
\begin{diagram}[small, tight]
\DRep_V(S)          &  \rTo        &  \DRep_V(A)\\
\dTo &                    & \dTo \\
k                &  \rInto            & \DRep_V(S\bs A)
\end{diagram}
\end{equation*}
\end{corollary}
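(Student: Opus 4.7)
The plan is to derive this corollary as an immediate consequence of Theorem~\ref{hopushout} by specializing the base DG algebra $R$ to $k$. First I would unwind the notation: by Definition~\ref{S2D1}, the derived functor $\L(k\bs \,\mbox{--}\,)_V$ coincides with the absolute derived representation functor $\DRep_V$, so the top row of the diagram in Theorem~\ref{hopushout} becomes
$$
\DRep_V(S) \,\xrightarrow{\ \L(k\bs f)_V\ }\, \DRep_V(A),
$$
where $\L(k\bs f)_V$ is obtained by applying $(\,\mbox{--}\,)_V$ to any cofibrant lift of $f$ in $\DGA_k$; this is precisely what the corollary calls $f_V$ (as a morphism in $\Ho(\cDGA_k)$). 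The left vertical $\L(k\bs \varrho)_V$ specializes to the canonical map $\DRep_V(S) \to k$ determined by $\varrho: S \to \END\,V$, which is exactly the vertical arrow appearing in the corollary's square. Finally, the bottom right corner $\L(S\bs A)_V$ is $\DRep_V(S\bs A)$ by definition.

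With these identifications the universal property in Theorem~\ref{hopushout}—that $\L(S\bs A)_V$ is the homotopy cofibre of $\L(R\bs f)_V$—specializes directly to the assertion that $\DRep_V(S\bs A)$ is the homotopy cofibre of $f_V$, proving the corollary.

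The only point that requires even a line of verification is that the procedure of applying $(k\bs \,\mbox{--}\,)_V$ to a cofibrant resolution in $\DGA_k$ agrees with the construction of $\L(\,\mbox{--}\,)_V$ from Definition~\ref{S2D1}; this is tautological since $\DGA_{k} = \DGA_{k\bs-}$ and the model structures coincide. There is no genuine obstacle here: the entire content of the corollary is already contained in the theorem, and the argument amounts to nothing more than substituting $R = k$.
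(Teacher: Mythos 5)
Your proposal is correct and matches the paper's intent exactly: the corollary is stated as an immediate consequence of Theorem~\ref{hopushout}, obtained by setting $R = k$ and using Definition~\ref{S2D1} to identify $\L(k\bs\,\mbox{--}\,)_V$ with $\DRep_V$, which is precisely what you do. The paper offers no separate argument, so there is nothing further to compare.
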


\subsubsection{A cofibration spectral sequence}
\la{S2.41}
The above result suggests that the homology of $ \DRep_V(S\bs A) $ should be related to
the homology of $ \DRep_V(S) $ and $ \DRep_V(A) $ through a standard spectral sequence
associated to a cofibration. To simplify matters we will assume that $V$ is a $0$-complex
and work in the category $ \DGA_k^+$ of non-negatively graded DG algebras
({\it cf.} Remark~\ref{S2.2.4}).
\begin{theorem}
\la{EMsps}
Given $\, R \xrightarrow{} S \xrightarrow{} A\,$ in $\DGA^+_k$ and a representation $ S \to \End(V) $,
there is an Eilenberg-Moore spectral sequence with
$$
E^2_{\ast,\,\ast} = \Tor_{\ast,\,\ast}^{\H_\bullet(R\bs S, V)}(k,\,\H_\bullet(R\bs A, V))
$$
converging to $ \H_{\bullet}(S\bs A, V) $.
\end{theorem}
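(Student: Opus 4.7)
The plan is to reduce the statement to the classical Eilenberg--Moore spectral sequence for a derived tensor product of non-negatively graded commutative DG algebras. First, I apply Corollary~\ref{hlimr} (a direct consequence of Theorem~\ref{hopushout}) to the composition $R \to S \to A$ to identify $\DRep_V(S\bs A)$ with the homotopy pushout of $k \xleftarrow{} \DRep_V(R\bs S) \xrightarrow{} \DRep_V(R\bs A)$; equivalently, there is a weak equivalence
$$\DRep_V(S\bs A) \;\simeq\; k \otimes^{\L}_{\DRep_V(R\bs S)} \DRep_V(R\bs A)$$
in $\cDGA_k^+$. Moreover, the proof of Theorem~\ref{hopushout} produces an explicit strictly commutative model: the cofibration $\tilde{f}: Q(R\bs S) \into Q(R\bs A)$ constructed there is mapped by the left Quillen functor $(R\bs\mbox{--})_V$ to a cofibration $\tilde{f}_V: Q(R\bs S)_V \into Q(R\bs A)_V$ in $\cDGA_k^+$, and $\DRep_V(S\bs A)$ is represented by the honest pushout $k \otimes_{Q(R\bs S)_V} Q(R\bs A)_V$.

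Writing $B := Q(R\bs S)_V$ and $M := Q(R\bs A)_V$, viewed as a DG $B$-module through $\tilde{f}_V$, I then produce the spectral sequence by the standard Eilenberg--Moore filtration. Concretely, I choose a semifree resolution $p: P \sonto k$ of $k$ as a DG $B$-module, equipped with an increasing filtration $F_0 P \subset F_1 P \subset \cdots$ by sub-$B$-modules whose successive quotients $F_s P/F_{s-1} P$ are free $B$-modules on homogeneous generators. Since $\tilde{f}_V$ is a cofibration in $\cDGA_k^+$, the object $M$ is flat enough that $P \otimes_B M$ still computes $k \otimes^{\L}_B M \simeq \DRep_V(S\bs A)$, and the filtration $F_s P \otimes_B M$ descends to a filtration of this total complex. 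Passing to the associated graded and taking homology along the internal differential, I identify $E^1$ with the reduced bar complex of $\H_\bullet(R\bs S, V)$ with coefficients in $k$ and $\H_\bullet(R\bs A, V)$; the horizontal differential on $E^1$ is the bar differential, so
$$E^2_{p,q} \;\cong\; \Tor^{\H_\bullet(R\bs S, V)}_{p,q}\bigl(k,\, \H_\bullet(R\bs A, V)\bigr).$$

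The main technical point, and the reason the hypothesis $\DGA_k^+$ enters, is convergence. Working with non-negatively graded algebras lets me take all cofibrant replacements concentrated in non-negative internal degree (by Proposition~\ref{3.2for} together with the explicit almost free resolutions of Section~\ref{S2.5}), so that $B$ and $M$ are objects of $\cDGA_k^+$; I can then choose $P$ so that the filtration $F_\bullet P$ is exhaustive and bounded below in each total degree. The resulting spectral sequence is first-quadrant and converges strongly to $\H_\bullet(S\bs A, V)$. The hardest part of the bookkeeping is to verify that the $E^2$-page is $\Tor$ over the graded algebra $\H_\bullet(R\bs S, V)$ rather than over $B$ itself; this comes down to checking that, after collapsing the internal differential, the $E^1$-differential is exactly the bar differential of $\H_\bullet(R\bs S, V)$ acting on $\H_\bullet(R\bs A, V)$, which is a routine but careful transcription of the topological Eilenberg--Moore argument to the setting of commutative DG algebras.
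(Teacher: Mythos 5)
Your proposal is correct and follows essentially the same route as the paper: both reduce the statement to the identification $Q(S\bs A)_V \cong k \otimes_{Q(R\bs S)_V} Q(R\bs A)_V$ coming from the pushout diagram in the proof of Theorem~\ref{hopushout}, with almost free resolutions guaranteeing that the commutative DG algebras involved are almost free (hence flat enough), and then invoke the standard Eilenberg--Moore construction. The only difference is that the paper simply cites the standard existence result (\cite{McC}, Theorem~7.6) where you unwind the bar filtration and convergence argument by hand.
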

\begin{proof}
To construct this spectral sequence we will use the cofibrant resolutions
constructed in the proof of Theorem~\ref{hopushout}.
In addition, we will assume that $ Q(R\bs S) $ is chosen to be almost free
in $ \DGA_R^+ $ ({\it cf.}  \cite{FHT}, Proposition~3.1). 
Then the pushout diagram \eqref{drepd3} gives the pushout
in the category $ \cDGA_k^+ $:
\begin{equation}
\la{dred}
\begin{diagram}[small, tight]
Q(R\bs S)_V   &  \rTo^{\tilde{f}_V} & Q(R \bs A)_V \\
\dTo^{\tilde{\varrho}_V} &          & \dTo \\
k             &  \rInto             & Q(S \bs A)_V
\end{diagram}
\end{equation}
where $\, \tilde{\varrho}_V :\,Q(R\bs S)_V \to  k \,$ is the map adjoint to
$\,\tilde{\varrho} := \varrho \circ p_{R\bs S}: Q(R\bs S) \to \End\,V \,$.
It follows from \eqref{dred} that
$$
k \otimes_{Q(R \bs S )_V} Q(R \bs A)_V \cong Q(S\bs A)_V \ .
$$
Moreover, for the almost free DG resolutions $\, Q(R\bs S),\,Q(R \bs A)\,$ and $\, Q(S \bs A)\,$,
the commutative DG algebras $ Q(R\bs S)_V,\,Q(R \bs A)_V $ and $ Q(S \bs A)_V $
are almost free. The existence of the Eilenberg-Moore spectral is now standard (see, e.g.,
\cite{McC}, Theorem~7.6).
\end{proof}

\subsubsection{The zero homology}
\la{S2.3.3}
The next result shows that $ \DRep_V(S \bs A) $ is indeed the `higher' derived functor of
the classical representation scheme $ \Rep_V(S\bs A) $ in the sense of homological algebra.
\bthm
\la{S2T4}
Let $ S \in \Alg_k $ and $ V $ concentrated in degree $0$. Then, for any $ S \bs A \in \Alg_S $,
$$
\H_0(S\bs A,V) \cong (S\bs A)_V
$$
where $(S\bs A)_V$ is a commutative algebra representing $ \Rep_V(S \bs A) $, see \eqref{S2E12}.
\ethm
\bproof
Fix a cofibrant resolution $ Q(S \bs A) \sonto S \bs A $ in $ \DGA^+_S $. Then, for any
$ C \in \cAlg_k $, there are natural isomorphisms
\begin{eqnarray*}
\Hom_{\cAlg_k}(A_V,\,C) \,\,& \cong & \Hom_{\Alg_S}(A,\,\End\,V \otimes C)\\
                          &\cong  & \Hom_{\DGA^+_S}(Q(S \bs A),\, \End\,V \otimes C)\\
                          &\cong& \Hom_{\cDGA^+_k}(Q(S \bs A)_V ,\, C)\\
                          &\cong& \Hom_{\cAlg_k}(\H_0[Q(S \bs A)_V],\, C)\ ,
\end{eqnarray*}
where the first isomorphism is \eqref{S2E11'}; the second
follows from the fact that $ \End\,V  \otimes C $ is concentrated in degree $0$;
the third is the result of Proposition~\ref{S2P1}$(b)$ and the last follows again from the
fact that $C$ is concentrated in degree $0$. By Yoneda's Lemma, we now conclude that
$ \H_0[Q(S \bs A)_V] \cong (S \bs A)_V $. On the other hand, by definition of $ \DRep_V(S\bs A)$,
$\,\H_0[Q(S\bs A)_V] \cong \H_0[\DRep_V(S\bs A)] = \H_0(S\bs A,\,V)\,$.
\eproof

\vspace{1ex}

\remark\
Theorem~\ref{S2T4} shows that $ \DRep_V(S\bs A) $ is trivial whenever $ \Rep_V(S \bs A) $ is trivial.
Indeed, if $ \Rep_V(S \bs A) $ is empty, then $ (S\bs A)_V = 0 $. By Theorem~\ref{S2T4}, this means
that $\,1 = 0\,$ in $ \H_\bullet[\DRep_V(S\bs A)] $ and hence $ \H_\bullet[\DRep_V(S\bs A)] $ is the zero
algebra. This, in turn, means that $ \DRep_V(S\bs A) $ is acyclic and hence $ \DRep_V(S \bs A) = 0 $
in $\Ho(\cDGA^+_k) $ as well.
\begin{example}
\la{S2Ex1}
The above remark applies, for example, to the Weyl algebra $\, A = k \langle x, y \rangle/(xy-yx-1) \,$.
In fact, since $k$ has characteristic zero, $A$ has no (nonzero) finite-dimensional modules.
So $ \Rep_V(A) $ is empty and $ \DRep_V(A) = 0 $ for all $ V $.
\end{example}
\subsubsection{$\GL(V)$-invariants}
\la{S2.3.4}
We will keep the assumption that $V$ is a $0$-complex and assume, in addition, that $S=k$.
Let $ \GL(V) \subset \End(V) $ denote, as usual, the group of invertible endomorphisms of $V$.
Consider the right action of $ \GL(V) $ on $ \End(V) $ by conjugation, $\,\alpha \mapsto g^{-1} \alpha g \,$,
and extend it naturally to the functor $\,\End\,V \otimes \mbox{--}\,$; thus, we have a
homomorphism from $ \GL(V) $ to the opposite automorphism group of this functor. Now,
by `general nonsense', the automorphism groups of adjoint functors are isomorphic.
Hence, if we regard $\,\End\,V \otimes \mbox{--}\,$ as functor $\,\cDGA_k \to \DGA_k \,$, then
the right action of $ \GL(V) $  on $\,\End\,V \otimes \mbox{--}\,$ translates (through the adjunction of
Proposition~\ref{S2P1}$(b)$) to a (left) action on $\, (\,\mbox{--}\,)_V: \DGA_k \to \cDGA_k \,$. Using this last action, we define the invariant subfunctor
\begin{equation}
\la{S2E16}
(\,\mbox{--}\,)_V^{\GL}\,:\ \DGA_k \to \cDGA_k\ , \quad A \mapsto A_V^{\GL(V)}\ .
\end{equation}
On the other hand, we can also regard $\,\End\,V \otimes \mbox{--}\,$ as a functor on the homotopy category $ \Ho(\cDGA_k) $ as in Theorem~\ref{S2T2}$(c)$. Then, using Quillen's adjunction \eqref{S2E15}, we can transfer the action of $ \GL(V) $ on $\,\END\,V \otimes
\mbox{--}\,$ to the derived functor $ \L (\,\mbox{--}\,)_V $. It follows that $\GL(V)$  acts naturally on the homology $ \H_{\bullet}(A,V) $.
\bthm
\la{S2P4}
$(a)$ The functor \eqref{S2E16} has a total left derived functor\footnote{Abusing notation, we will often write $ \DRep_V(A)^{\GL} $ for $ \L(A)_V^{\GL} $.}
$$
\L(\,\mbox{--}\,)_V^{\GL}:\,\Ho(\DGA_k) \to \Ho(\cDGA_k)\ .
$$

$(b)$ For any $ A \in \DGA_k $, there is a natural isomorphism of graded algebras
$$
\H_\bullet[\L(A)_V^{\GL}] \cong \H_\bullet(A, V)^{\GL(V)}\ .
$$
\ethm

Before proving Theorem~\ref{S2P4}, we note that, unlike $\,(\,\mbox{--}\,)_V\,$,
the functor $ (\,\mbox{--}\,)_V^{\GL} $ does not seem to have a right adjoint, so we
can't deduce the existence of its left derived functor from Quillen's Adjunction
Theorem; instead, we will use another general result from homotopical algebra --
the Brown Lemma~\ref{BrL}.

First, we recall the notion of polynomial homotopy between DG algebra maps. Let
$ \Omega := k[t] \oplus k[t]\,dt $ denote the algebraic de Rham complex of the affine
line\footnote{Following our general convention, we assume that the differential on
$ \Omega $ has degree $-1$.}.
We say that two maps $\,f,g:\,A \to B $ are {\it polynomially homotopic}
if there is a DG algebra map (homotopy) $\, h: A \to B \otimes \Omega \,$ such that
$ h(0) = f $ and $ h(1) = g $, where $\, h(a): A \to B \,$, $\, a \in k \,$, denotes
the composition of $h$ with the quotient map $\,B \otimes \Omega \onto B \otimes
\Omega/(t-a) \cong B \otimes k = B\,$. We refer the reader to the Appendix,
Sect.~\ref{3.4}, for properties of polynomial homotopy which we will
use in the proof of the following lemma.
\begin{lemma}
\la{rephom}
Let $\,h:\, A \to B \otimes \Omega $ be a polynomial homotopy between maps $\,f:= h(0) $
and $g:=h(1)$ in $ \DGA_k $. Then \\
$(i)$  There is a map $\,h_V:\,A_V \rar B_V \otimes \Omega $ in $\cDGA_k$ such that
$\,h_V(0)=f_V$ and $h_V(1)=g_V$.\\
$(ii)$ $\ h_V$ restricts to a morphism $\,h_V^{\GL}:\,A_V^{\GL} \rar B_V^{\GL} \otimes \Omega $ in $\cDGA_k$.
\end{lemma}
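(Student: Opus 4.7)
The strategy is to construct $h_V$ via the adjunction of Proposition~\ref{S2P1}$(b)$, applied to the commutative DG algebra $B_V \otimes \Omega$, and then derive equivariance from the naturality of the adjunction. First, observe that $B_V \otimes \Omega \in \cDGA_k$ since both factors are commutative DG algebras. Let $\eta_B : B \to \End V \otimes B_V$ denote the unit of the adjunction $(\,\mbox{--}\,)_V \dashv \End V \otimes \mbox{--}\,$, corresponding to $\id_{B_V}$ under \eqref{S2E12'}. The plan for part $(i)$ is: starting from $h : A \to B \otimes \Omega$, form the composition
\[
A \xrightarrow{h} B \otimes \Omega \xrightarrow{\eta_B \otimes \id_\Omega} (\End V \otimes B_V) \otimes \Omega \,\cong\, \End V \otimes (B_V \otimes \Omega)
\]
(the last isomorphism being the associativity of tensor products of DG algebras) and let $h_V : A_V \to B_V \otimes \Omega$ be the unique morphism in $\cDGA_k$ corresponding to this composition under the adjunction. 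Because the adjunction is between DG categories, $h_V$ is automatically a morphism of commutative DG algebras.

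To verify $h_V(0) = f_V$ and $h_V(1) = g_V$, I will use naturality in the codomain. The evaluation $\ev_a : \Omega \to k$ sending $t \mapsto a$ and $dt \mapsto 0$ is a DG algebra map; applying $\id_{B_V} \otimes \ev_a$ gives the quotient map $B_V \otimes \Omega \onto B_V$ defining $h_V(a)$. Composing with this map on the right corresponds, under the adjunction, to post-composition with $\id_{\End V \otimes B_V} \otimes \ev_a$ on the $\End V \otimes B_V \otimes \Omega$ side. Thus $h_V(a)$ corresponds to
\[
(\id_{\End V \otimes B_V} \otimes \ev_a) \circ (\eta_B \otimes \id_\Omega) \circ h = \eta_B \circ (\id_B \otimes \ev_a) \circ h = \eta_B \circ h(a),
\]
and since $\eta_B \circ h(a)$ is exactly the image of $h(a)_V$ under the adjunction, we conclude $h_V(a) = h(a)_V$, giving $h_V(0) = f_V$ and $h_V(1) = g_V$.

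For part $(ii)$, the key point is that $h_V$ is $\GL(V)$-equivariant when $\GL(V)$ acts on $B_V \otimes \Omega$ through its action on $B_V$ alone (i.e.\ trivially on $\Omega$). The $\GL(V)$-action on the functor $(\,\mbox{--}\,)_V$ is defined, as recalled in \ref{S2.3.4}, so that for each $g \in \GL(V)$ the endomorphism $\delta_g(A) : A_V \to A_V$ satisfies $(\id_{\End V} \otimes \delta_g(A)) \circ \eta_A = \eta_g \circ \eta_A$, where $\eta_g$ is conjugation by $g$ on the $\End V$-factor. Using this identity, a short diagram chase shows that both $h_V \circ \delta_g(A)$ and $(\delta_g(B) \otimes \id_\Omega) \circ h_V$ correspond, under the adjunction, to $\eta_g \circ (\eta_B \otimes \id_\Omega) \circ h$; hence they are equal. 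Consequently $h_V$ carries $A_V^{\GL}$ into $(B_V \otimes \Omega)^{\GL} = B_V^{\GL} \otimes \Omega$, yielding the desired restriction $h_V^{\GL}$.

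The main (minor) subtlety is the distinction between $(B \otimes \Omega)_V$, which is the target of the naive functorial image of $h$, and the DG algebra $B_V \otimes \Omega$ appearing in the statement: these are genuinely different commutative DG algebras, which is why one cannot simply invoke functoriality of $(\,\mbox{--}\,)_V$ but must route the construction through the adjunction as above. Everything else is formal manipulation of adjoints and evaluations.
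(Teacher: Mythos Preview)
Your proposal is correct and follows essentially the same approach as the paper: construct $h_V$ by composing $h$ with the universal representation $\pi_B \otimes \id_\Omega$ (your $\eta_B \otimes \id_\Omega$) and applying the adjunction of Proposition~\ref{S2P1}$(b)$, then verify $\GL(V)$-equivariance of $h_V$ via the defining compatibility between the $\GL(V)$-action and the unit of the adjunction. Your treatment of the evaluations $h_V(a) = h(a)_V$ and of the identity $(B_V \otimes \Omega)^{\GL} = B_V^{\GL} \otimes \Omega$ is slightly more explicit than the paper's, but the argument is the same.
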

\begin{proof}
Given a DG algebra $ A \in \DGA_k $, let $ \pi_A :\, A \rar \End\,V \otimes A_V \,$ denote the universal
representation corresponding to the identity map under the adjunction of Proposition~\ref{S2P1}$(b)$.
Then the composite map
$$
A \stackrel{h}{\rar} B \otimes \Omega \xrightarrow{\pi_B \otimes \id_{\Omega}} \End(V) \otimes B_V \otimes \Omega
$$
corresponds to the morphism $\,h_V:\,A_V \to  B_V \otimes \Omega \,$ which satisfies the conditions of $(i)$.

To prove $(ii)$, it suffices to show that $h_V$ is $\GL(V)$-equivariant. For this, we fix
$\,g \,\in \,\GL(V)\,$ and verify that $\,h_V \circ g = (g \otimes \id_{\Omega}) \circ h_V\,$ as maps
$\,A_V \rar B_V \otimes \Omega $. By Proposition~\ref{S2P1}, it suffices to show that the outer square
in the following diagram commutes:
\begin{equation*}
\la{pushde}
\begin{diagram}[small, tight]
A                    & \rTo^{\pi_A}    & \End(V) \otimes A_V  & \rTo^{\id_{\End(V)} \otimes g} & \End(V) \otimes A_V \\
\dTo^{h}             &         & \dTo^{\id_{\End(V)} \otimes h_V} &                  & \dTo^{\id_{\End(V)} \otimes h_V} \\
B \otimes \Omega  & \rTo^{\,\pi_B \otimes \id_{\Omega}\,}    & \End(V) \otimes B_V \otimes \Omega &
\rTo^{\,\Ad_g \otimes \id_{B_V \otimes \Omega}\,}             &  \End(V) \otimes B_V \otimes \Omega
\end{diagram}
\end{equation*}
But the two inner squares in this diagram obviously commute, hence so does the outer square.
\end{proof}

\begin{proof}[Proof of Theorem~\ref{S2P4}]
$(a)$ By Lemma~\ref{BrL}, it suffices to show that $\,(\,\mbox{--}\,)_V^{\GL}$ maps any
acyclic cofibration $\,i: A \sinto B \,$ between cofibrant objects in $\DGA_k$ to a weak equivalence in
$\cDGA_k$. By \ref{2.4.1}, given $\,i: A \sinto B \,$, there is a map $\,p:B \rar A\,$ in $\DGA_k$
such that $\,pi=\id_A\,$ and $ip$ is homotopic to $\id_B$. Hence, $p_V^{\GL} \circ i_V^{\GL}= \id_{A_V^{\GL}}$.
On the other hand, by Proposition~\ref{lhom}, there is a morphism $\,h:B \rar B \otimes \Omega $ in $\DGA_k$
with $\,h(0)=\id_B\,$ and $\,h(1)=ip\,$. Lemma~\ref{rephom}$(ii)$ yields a morphism
$h_V^{\GL}:B_V^{\GL} \rar B_V^{\GL} \otimes \Omega $ such that $h_V(0)=\id_{B_V^{\GL}}$ and $h_V(1)=i_V^{\GL} \circ p_V^{\GL}$.
Finally, by Remark~\ref{3.4.4}, $i_V^{\GL} \circ p_V^{\GL}$ induces the identity map at the level of homology.
Hence $i_V^{\GL}$ is a weak equivalence in $\cDGA_k$. This proves $(a)$.

$(b)$ The functor $ \L[(\,\mbox{--}\,)_V^{\GL}] $ maps $\,\gamma A \in \Ho(\DGA_k)\,$ to the class
of the invariant subalgebra $ (QA_V)^{\GL(V)} $ of $ (QA)_V $, where $ QA $ is a cofibrant replacement of $A$. By $(a)$, this
map is well-defined, i.e. does not depend on the choice of $ QA $.
On the other hand, $ \mathrm{H}_{\bullet}(\mbox{--}, V)^{\GL} $ maps $ \gamma A $ to $\mathrm{H}_{\bullet}( [\gamma(QA_V)])^{\GL(V)} $. Since $k$ is of characteristic $0$ and since $\GL(V)$ is reductive,
$\,\mathrm{H}_{\bullet}(\gamma[(QA_V)^{\GL(V)}]) \cong [\mathrm{H}_{\bullet}(\gamma(QA_V))]^{\GL(V)} \,$ for all
$A \in \DGA_k$. This proves part $(b)$.
\end{proof}

\remark\ The above argument (based on Lemma~\ref{rephom}$(i)$) gives an alternative proof of the existence
of the derived functor $ \L(\,\mbox{--}\,)_V $ on $ \Ho(\DGA_S) $, which is part $(b)$ of Theorem~\ref{S2T2}.
However, the other two parts of Theorem~\ref{S2T2}, including the adjunction at the level of homotopy categories,
do not follow from Lemma~\ref{rephom} and need a separate proof.

\subsubsection{Relation to the Ciocan-Fontanine-Kapranov construction}
\la{S2.3.5} For an ordinary $k$-algebra $A$ and a $k$-vector space $V$, Ciocan-Fontanine and
Kapranov introduced a derived affine scheme, $ \RAct(A, V) $, which they called the {\it derived space of actions of $A$}
(see \cite{CK}, Sect.~3.3). Although the construction of $ \RAct(A, V) $ is quite different from our construction
of $ \DRep_V(A) $, Proposition~3.5.2 of \cite{CK} shows that, for a certain {\it specific} resolution of $A$,
the DG algebra $ k[\RAct(A, V)] $ satisfies the adjunction of Proposition~\ref{S2P1}$(b)$.
Since $ k[\RAct(A, V)] $ and $ \DRep_V(A) $ are independent of the choice of resolution, we conclude
\begin{theorem}
\la{S2T44}
If $\,A \in \Alg_k \,$ and $V$ is a $0$-complex,
then $\, k[\RAct(A,\,V)] \cong \DRep_V(A) \,$ in $ \Ho(\cDGA^+_k) $.
\end{theorem}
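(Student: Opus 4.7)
The plan is to compare both constructions through the universal adjunction of Proposition~\ref{S2P1}$(b)$. Specifically, I want to show that, for the particular almost free resolution $Q \sonto A$ in $\DGA^+_k$ used in \cite{CK} to define $\RAct(A,V)$, the DG algebra $k[\RAct(A,V)]$ is isomorphic to $Q_V$ in $\cDGA^+_k$, and then invoke the invariance of $\DRep_V(A)$ under the choice of cofibrant resolution (Section~\ref{S2.2.2}) to transport this to an isomorphism in $\Ho(\cDGA^+_k)$.

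In more detail, I would first identify the specific resolution $p:Q \sonto A$ of \cite[Sect.~3.3]{CK}: this is the standard (almost free) DG algebra resolution built from the bar-type construction in loc.~cit., and by Proposition~\ref{3.2for} it is cofibrant in $\DGA_k^+$. Next I would invoke \cite[Proposition~3.5.2]{CK}, which asserts (once translated into our notation) that $k[\RAct(A,V)]$ represents the functor
\[
C\ \longmapsto\ \Hom_{\DGA_k}(A,\,\End\,V\otimes C)\ ,\qquad C \in \cDGA_k^+\ ,
\]
via an evaluation map factoring through the same universal representation $\pi_Q:Q \to \End\,V \otimes Q_V$ that features in the adjunction of Proposition~\ref{S2P1}$(b)$. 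Combined with Proposition~\ref{S2P1}$(b)$ applied to $Q$, this yields a natural isomorphism
\[
\Hom_{\cDGA_k^+}(k[\RAct(A,V)],\,C)\ \cong\ \Hom_{\cDGA_k^+}(Q_V,\,C)\qquad (C\in\cDGA_k^+)\ ,
\]
so that by Yoneda's Lemma there is a canonical isomorphism $k[\RAct(A,V)] \cong Q_V$ in $\cDGA_k^+$.

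Finally, since $Q$ is a cofibrant resolution of $A$ in $\DGA_k^+$, Definition~\ref{S2D1} (together with the observation of Section~\ref{S2.2.2}) gives $\DRep_V(A) \cong Q_V$ in $\Ho(\cDGA_k^+)$. Composing with the isomorphism above produces the desired identification $k[\RAct(A,V)] \cong \DRep_V(A)$ in $\Ho(\cDGA_k^+)$.

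The only real obstacle is a bookkeeping one: one must check that the adjunction implicitly stated in \cite[Proposition~3.5.2]{CK} (which is formulated in the language of DG schemes and `derived actions') is literally the same adjunction as that of Proposition~\ref{S2P1}$(b)$ for the specific $Q$ used there, and that the model-categorical notion of cofibrancy for the CK resolution matches the one used in Appendix~\ref{3} of this paper. Both points are routine but require a careful comparison of conventions; once they are settled, the argument above is essentially automatic from Yoneda and the homotopy invariance of $Q_V$.
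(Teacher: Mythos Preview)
Your proposal is correct and follows essentially the same route as the paper: both arguments rest on the observation that \cite[Proposition~3.5.2]{CK} shows $k[\RAct(A,V)]$ satisfies the adjunction of Proposition~\ref{S2P1}$(b)$ for a specific cofibrant resolution $Q$ of $A$, whence $k[\RAct(A,V)] \cong Q_V$ by Yoneda, and independence of $\DRep_V(A)$ from the choice of resolution finishes the argument. The paper's proof is more terse but identical in substance; your explicit acknowledgement of the bookkeeping needed to match the CK adjunction with Proposition~\ref{S2P1}$(b)$ is a fair point that the paper also leaves implicit.
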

We should mention that the fact that $ k[\RAct(A, V)] $ is independent of resolutions was proved in
\cite{CK} by an explicit but fairly involved calculation using spectral sequences. Strictly speaking, this
calculation does not show that $ \RAct(\,\mbox{--}\,, V) $ is a Quillen derived functor (in the sense of
Theorem~\ref{Qthm}). In combination with Theorem~\ref{S2T44}, our main Theorem~\ref{S2T2} can thus be
viewed as a strengthening of \cite{CK} as it implies that $ \RAct(A, V) $ is indeed a (right) Quillen
derived functor on the category of DG schemes.

\subsection{Explicit presentation}
\la{S2.5}
Let $ A \in \Alg_k $. Given an almost free resolution $ R \sonto A $ in $\DGA^+_k$, the DG algebra $ R_V $ can be
described explicitly. To this end, we extend a construction of Le Bruyn and van de Weyer (see \cite{LBW}, Theorem~4.1)
to the case of DG algebras. Assume, for simplicity, that $ V = k^d $. Let $\{x^\alpha\}_{\alpha \in I}$ be a set of
homogeneous generators of an almost free DG algebra $R$, and let $ d_R $ be its differential. Consider a free graded algebra
$\tilde{R}$ on generators $\,\{x^{\alpha}_{ij}\,:\, 1\leq i,j\leq d\, ,\, \alpha \in I\}\,$, where
$\,|x^{\alpha}_{ij}|=|x^{\alpha}|\,$ for all $ i,j $. Forming matrices $ X^\alpha :=
\| x^{\alpha}_{ij}\| $ from these generators, we define the algebra map
$$
\pi:\, R \to M_d(\tilde{R})\ , \quad  x^\alpha \mapsto X^\alpha \ ,
$$
where $ M_d(\tilde{R}) $ denotes the ring of $ (d \times d)$-matrices with entries in $\tilde{R}$.
Then, letting $\, \tilde{d}(x^{\alpha}_{ij}) := \| \pi(d x^{\alpha}) \|_{ij}\,$, we define a differential
$ \tilde{d} $ on generators of $ \tilde{R} $ and extend it to the whole of $ \tilde{R}$ by linearity
and the Leibniz rule. This makes $ \tilde{R} $ a DG algebra. The abelianization of
$\tilde{R} $ is a free (graded) commutative algebra generated by (the images of) $\,x^\alpha_{ij}\,$ and the differential
on $ \tilde{R}_\nn $ is induced by $ \tilde{d}$.
\bthm
\la{comp}
There is an isomorphism of DG algebras $\,  \rtv{R} \cong \tilde{R} \,$. Consequently, $\,R_V \cong \tilde{R}_\nn\,$.
\ethm
\begin{proof}
By Proposition~\ref{S2P1}$(a)$, it suffices to show that
$$
\Hom_{\DGA^+_k}(\tilde{R},B) \,\cong\, \Hom_{\DGA^+_k}(R,\,M_d(B))
$$
for any DG algebra $ B \in \DGA_k^+ $. Given $\,\phi \,\in\, \Hom_{\DGA^+_k}(\tilde{R},B)\,$, define
$\tau(\phi) \,\in\,  \Hom_{\DGA^+_k}(R,\,M_d(B))$ by
$$
\|\tau(\phi)(x^\alpha)\|_{ij} \,=\, \phi(x^\alpha_{ij}) \ .
$$
Conversely, given $f \,\in\,  \Hom_{\DGA^+_k}(R,\,M_d(B))$, define $ \eta(f)\,\in\,  \Hom_{\DGA^+_k}(\tilde{R},\,B)$ by
$$
\eta(f)(x^\alpha_{ij})\,=\, \|f(x^\alpha)\|_{ij} \ .
$$
It is easy to check that both $\eta(f)$ and $\tau(\phi)$ respect differentials,
and the maps $\tau$ and $\eta$ are mutually inverse bijections.
\end{proof}
Using Theorem~\ref{comp}, one can construct a finite presentation for $ R_V $
whenever a finite almost free resolution $ R \to A $ is available. In practice, many interesting algebras admit such resolutions.
For example, the DG algebras introduced recently in \cite{G1} and \cite{K2} provide very interesting (in a sense, canonical)
finite resolutions for many 2D and 3D Calabi-Yau algebras.
\begin{example}\la{Ex2D}
Let $\,A = k[x,y]\,$ be the polynomial algebra in two variables. One can check ({\it cf.} \cite{G1}, Corollary~3.6.5) that $A$
has an almost free resolution in $ \DGA_k^+ $ of the form $\,R := k\langle x,\,y;\,t\rangle \,$, where $\,x,y\,$ have degree $0$,
$\,t$ has degree $1$, and the differential on $ R $ is defined by $\,dx = dy = 0\,$,  $\,dt=xy-yx\,$.
For $ V = k^d $, Theorem~\ref{comp} implies that $\, R_V \cong k[x_{ij},\,y_{ij};\,t_{ij}]\,$, where the generators
$\,x_{ij},\,y_{ij} \,$ have degree $0$, $\,t_{ij} $'s have degree $ 1$, and the differential on $ R_V $ is defined by
$$
dx_{ij} = dy_{ij} = 0 \ ,\quad dt_{ij} = \sum_{k=1}^d\, (x_{ik}y_{kj} - y_{ik}x_{kj})\ .
$$
It is convenient to use the matrix notation $\, X := \|x_{ij}\|$, $\, Y := \|y_{ij}\|$, $\, T := \|t_{ij}\|$,
writing the above relations in the form
\begin{equation}
\la{cdpol}
dX = dY = 0 \ , \quad  dT = [X,\,Y] \ .
\end{equation}

Now, for small $\,d \,$, we can actually compute the homology of $\, R_V\,$. If $ d=1 $, $\, R_V\,$ is just the abelianization of $ R $. Hence, $\,\H_{\bullet}(k[x,y], k) \cong 
k[x,y] \otimes k[t]/(t^2)\,$, where $t$ has homological degree $1$.  For $ d=2 $, the result is more complicated:
\begin{equation}
\la{reph2}
\mathrm{H}_{\bullet}(k[x,y], k^2)\,\cong\, (k[x,y]_2 \otimes \Lambda_k \mathfrak{g})/\mathbf{I}\ .
\end{equation}
Here $\,k[x,y]_2 \,$ is the coordinate ring of $\, \Rep_2(k[x,y])\,$ that has eight commuting generators $\,x_{11},\,x_{12},\,x_{21},\,x_{22} \,$ and $\,y_{11},\,y_{12},\,y_{21},\,y_{22} \,$,
each of homological degree $0$,  satisfying
\begin{eqnarray*}
&& x_{12} \,y_{21} - y_{12} \,x_{21} = 0\ ,\\
&& x_{11}\, y_{12} + x_{12} \,y_{22} - y_{11} \,x_{12} - y_{12}\, x_{22} = 0\ ,\\
&& x_{21} \,y_{11} + x_{22}\, y_{21} - y_{21}\, x_{11} - y_{22} \, x_{21} = 0\ ;
\end{eqnarray*}
$\,\mathfrak{g}\,$ is a vector space spanned by three generators $ \xi $, $\, \eta $ and $ \tau $ of homological degree $1$. The ideal of relations $\, \mathbf{I} \subseteq k[x,y]_2 \otimes \Lambda_k \mathfrak{g}\,$ is generated by \\

(1)\quad $x_{12}\,\eta - y_{12}\,\xi \,=\, (x_{12}y_{11}-y_{12}x_{11})\,\tau \,$,\\

(2)\quad $x_{21}\,\eta - y_{21}\,\xi \,=\, (x_{21}y_{22}-y_{21}x_{22})\,\tau\,$,\\

(3)\quad $(x_{11}-x_{22})\,\eta- (y_{11}-y_{22})\,\xi \,=\, (x_{11}y_{22}- y_{11}x_{22})\,
\tau\,$,\\

(4)\quad $\xi \eta\,=\, y_{11}\,(\xi\tau) - x_{11}\,(\eta\tau) \, =\, y_{22}\,(\xi\tau)-x_{22}\,(\eta\tau)\,$.\\

\noindent
Note that $\, \Rep_d(k[x,y])\,$ is just the classical scheme of pairs of commuting matrices.
For $ d = 2 $, it is known to be a reduced and irreducible variety ({\it cf.} \cite{Ger}).
\end{example}

\begin{example}\la{Ex3D}
Let $ U(\mathfrak{sl}_{2}) $ be the universal enveloping algebra of the Lie algebra $ \mathfrak{sl}_{2}(k) $.
By \cite{G1}, Example~1.3.6, it has a finite DG resolution $ R \to U(\mathfrak{sl}_{2}) $, where
$ R = k \langle x,y,z; \xi, \theta, \lambda; t\rangle $ is the free graded algebra with generators $ x, y, z $ of degree $0$;
$ \xi, \theta, \lambda $ of degree $1$ and $t$ of degree $2$. The differential on $R $ is defined by
$$
d\xi = [y,z] + x\ ,\quad d\theta = [z,x] + y\ ,\quad d\lambda = [x,y] + z\ ,\quad dt = [x, \xi] + [y, \theta] + [z, \lambda]\ .
$$
For $ V = k^d $, Theorem~\ref{comp} then implies that
$$
R_V \cong k[x_{ij},\,y_{ij},\,z_{ij};\, \xi_{ij},\, \theta_{ij},\, \lambda_{ij};\,t_{ij}]\ ,
$$
where the generators $ x_{ij} \,$,$\, y_{ij} \,$,$ \,z_{ij} \,$ have degree zero,
$\,\xi_{ij} \,$, $\,\theta_{ij}\, $, $\,\lambda_{ij} \,$ have degree $ 1$,
and $\, t_{ij} $ have degree $ 2 $. Using the matrix notation $ X = \|x_{ij}\| \,$,
$\, Y = \| y_{ij}\|\,$, etc., we can write the differential on $\,R_V\,$ in the form
\begin{equation*}
d\Xi = [Y,Z] + X\ ,\quad d\Theta = [Z,X] + Y\ ,\quad d\Lambda = [X,Y] + Z\ ,\quad
dT = [X, \Xi] + [Y, \Theta] + [Z, \Lambda]\ .
\end{equation*}
For $ d = 1 $,  it is easy to compute
$$
\H_\bullet(U(\mathfrak{sl}_{2}),\,k) \cong k[t] \ ,
$$
where $\,k[t]\,$ is the free polynomial algebra generated by one variable of homological degree $2$.
A similar explicit description of derived representation schemes can be given for other three-dimensional Calabi-Yau algebras. For more examples, we refer to \cite[Sect.~6]{BFR}.
\end{example}

\section{Relative Cyclic Homology}
\la{S4}
In this section, we recall the definition of relative cyclic homology, due to Feigin and Tsygan \cite{FT, FT1}. We will introduce this notion in a slightly greater generality than in \cite{FT, FT1}, working over an arbitrary DG algebra. To the best of our knowledge,
this material has not appeared in standard textbooks on cyclic homology (like, e.g., \cite{L}),
so we will provide proofs, which are different from the original proofs in \cite{FT, FT1}.
In the next section, we will use these results to construct trace maps relating cyclic homology to representation homology.

\subsection{The cyclic functor}
\la{S4.1}
If $ A $ is a DG algebra, we write $\, A_\natural := A/[A,A] \,$, where $ [A,A] $ is the commutator
subspace of $ A $. The assignment $\, A \mapsto A_\natural \,$ is obviously a functor from $ \DGA_k $ to the category of complexes
$ \Com(k)$: thus, a morphism of DG algebras $ f: S \to A $ induces a morphism of complexes $\,f_\natural: S_\natural \to A_\natural \,$. Fixing $ S \in \DGA_k $, we now define the functor
\begin{equation}
\la{cycd}
\FT:\ \DGA_S \rar \Com(k)\ ,\quad (S \xrightarrow{f} A) \mapsto \cn(f_\natural)\ ,
\end{equation}
where `$ \cn $' refers to the mapping c\^{o}ne in $ \Com(k) $.
The next lemma shows that \eqref{cycd} behaves well with respect to homotopy.
\begin{lemma}
\la{cychom1}
If $ h: A \to B \otimes \Omega $ is a morphism in $ \DGA_S $ with $ h(0) = f $ and $ h(1) = g $, then
$$
\FT(f) \sim \FT(g) \quad \mbox{in}\quad \Com(k) \ .
$$
\end{lemma}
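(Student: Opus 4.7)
The plan is to build an explicit chain homotopy between $\FT(f)$ and $\FT(g)$ by ``integrating'' along the homotopy parameter $t$. Recall that $\FT(f)$ and $\FT(g)$ are morphisms $\cn(\alpha_\natural) \to \cn(\beta_\natural)$, where $\alpha: S \to A$ and $\beta: S \to B$ are the structure maps. On underlying graded vector spaces both cones are $S_\natural[1] \oplus A_\natural$ and $S_\natural[1] \oplus B_\natural$, respectively, and $\FT(f) - \FT(g)$ is nonzero only in the second summand. So it suffices to homotope $f_\natural$ to $g_\natural$ as chain maps $A_\natural \to B_\natural$, with a homotopy that annihilates the image of $\alpha_\natural$.

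First, I would apply the commutator-quotient functor $(\,\mbox{--}\,)_\natural$ to $h$. Since $\Omega$ is graded commutative, the canonical map $(B \otimes \Omega)_\natural \rightarrow B_\natural \otimes \Omega$ is an isomorphism, giving $h_\natural : A_\natural \to B_\natural \otimes \Omega$. Decompose
\[
h_\natural(\bar a) = p(a,t) + q(a,t)\,dt,\qquad p(-,t),\,q(-,t) : A_\natural \to B_\natural \otimes k[t],
\]
so that $p(-,0) = f_\natural$ and $p(-,1) = g_\natural$. Writing out the chain-map condition on $h_\natural$ (using the Leibniz formula on $B_\natural \otimes \Omega$ with the convention $|dt|=-1$) produces, in addition to the expected relation $p(\overline{da},t) = d_B\,p(a,t)$, the key identity
\[
\partial_t\, p(a,t) \;=\; \pm\bigl(q(\overline{da},t) - d_B\, q(a,t)\bigr).
\]
Now set $s : A_\natural \to B_\natural$, of degree $+1$, by $s(\bar a) := \int_0^1 q(a,t)\,dt$ (which is defined since $\mathrm{char}\,k = 0$). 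The fundamental theorem of calculus together with the identity above yields, up to the overall sign, $ds + sd = g_\natural - f_\natural$, exhibiting $s$ as the desired chain homotopy.

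Next, because $h$ is a morphism in $\DGA_S$, one has $h \circ \alpha = (\beta \otimes 1) \circ (\id_S)$, so after passing to commutator quotients $q(\overline{\alpha(s)},t) = 0$ for all $s \in S$. Consequently $s \circ \alpha_\natural = 0$, so the formula $\sigma(x,a) := (0,\,s(a))$ gives a well-defined map $\cn(\alpha_\natural) \to \cn(\beta_\natural)$ of degree $+1$. A direct computation with the cone differential (using $s\alpha_\natural = 0$) shows $d_{\rm cn}\sigma + \sigma d_{\rm cn} = \FT(g) - \FT(f)$, completing the proof.

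The main obstacle I expect is keeping the sign conventions straight: the paper's decree that the differential on $\Omega$ has degree $-1$ (so $dt$ sits in degree $-1$) is nonstandard, and one must carefully track the resulting Koszul signs when unpacking the chain-map condition for $h_\natural$, when differentiating under the integral, and when verifying the cone homotopy formula. Everything else is a fairly routine packaging of the classical ``parameter homotopy via $k[t] \oplus k[t]\,dt$'' construction into the setting of $S$-algebra maps and their induced maps on the relative cyclic functor.
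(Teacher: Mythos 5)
Your proof is correct and rests on the same mechanism as the paper's: the homotopy obtained by integrating the $dt$-component of $h_\natural$ over $[0,1]$, together with the observation that it kills the image of $S_\natural$ and therefore extends to the mapping cones. The paper merely packages this more functorially, factoring $\FT(f)=\widetilde{\ev}_0\circ\FT(h)$ and $\FT(g)=\widetilde{\ev}_1\circ\FT(h)$ and invoking the standard homotopy $\ev_1\sim\ev_0:\Omega\to k$ given by $f(t)+g(t)\,dt\mapsto\int_0^1 g(\tau)\,d\tau$, which spares it the explicit verification of your identity $\partial_t\,p=\pm(q\circ d-d\circ q)$.
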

\bproof
We recall that $\, \Omega = k[t] \oplus k[t] dt \,$ denotes the algebraic de Rham complex of $ \A^1_k $, which is a commutative
DG algebra. The natural maps $\,B \into B \otimes \Omega \hookleftarrow \Omega \,$ induce then an isomorphism of complexes $\,B_\n \otimes \Omega \xrightarrow{\sim} (B \otimes \Omega)_\n \,$.
Identifying these complexes, we have the commutative diagram
\begin{equation*}
\begin{diagram}[small, tight]
S_{\natural} & \rTo  & B_{\natural} \otimes \Omega\\
             & \rdTo & \dTo_{\id \otimes \ev_a}\\
             &       & B_{\natural}
\end{diagram}
\end{equation*}
which induces a map of c\^{o}nes $\,\widetilde{\ev}_a:\,
\cn(S_\n \to B_\n \otimes \Omega)\,\to \,\cn(S_\n \to B_\n)\,$
for each $ a \in k $. (Here, $ \ev_a:\, \Omega \to k $ stands for
the evaluation map $\,f(t) + g(t) dt \mapsto f(a)\,$, where $ a \in k $.)

If we now apply $ \FT $ to $ h $, we get a morphism
$$
\FT(h):\,\cn(S_\n \to A_\n)\,\to \,\cn(S_\n \to B_\n \otimes \Omega)\ ,
$$
such that $\,\FT(h)(0) := \widetilde{\ev}_0 \circ \FT(h) = \FT(f) \,$ and $\,\FT(h)(1) := \widetilde{\ev}_1 \circ \FT(h) =
\FT(g) \,$. To show that $\,\FT(f) \sim \FT(g) \,$ it thus suffices to show that
$\,\widetilde{\ev}_1 \sim \widetilde{\ev}_0\,$. But this last equivalence is immediate from
the fact that, for all $ a \in k \,$, $\,\ev_a \sim \ev_0\,$ as morphisms $ \Omega \to k $;
the corresponding homotopy $ \Omega \to k[1] $ is given by $\,f(t)+ g(t)\,dt \mapsto \int_0^a\,g(\tau)\,d\tau\,$,
see \ref{3.4.4}.
\eproof

Now, to state the main theorem we recall that
$ \Com(k) $ has a natural model structure with quasi-isomorphisms being the weak equivalences and the epimorphisms being the fibrations ({\it cf.} \ref{2.2ex}). The corresponding homotopy
category $ \Ho(\Com(k)) $ is isomorphic to the (unbounded) derived category $ \D(k) $, see \ref{2.2.7}.
\bthm
\la{FTT}
The functor \eqref{cycd} has a total left derived functor $\,\L\FT:\,\Ho(\DGA_S) \to \D(k)\,$
given by
$$
\L\FT(S \bs A) = \cn(S_\n \to Q(S\bs A)_\n)\ ,
$$
where $\, S \to Q(S\bs A)\,$ is a cofibrant resolution of $ S \to A $ in $ \DGA_S $.
\ethm
\bproof
The proof is analogous to the proof of Theorem~\ref{S2P4}. By Brown's Lemma, it suffices to show that $\,\FT\,$ maps
acyclic cofibrations $\,i: A \sinto B \,$ between cofibrant objects in $\DGA_S $ to weak equivalences
(quasi-isomorphisms) in $\Com(k) $. Given $\,i: A \sinto B \,$, there is a map $\,p:\,B \to A\,$ in $\DGA_S$ such that
$\,p\,i=\id_A\,$ and $\,i\,p \sim \id_B\,$ ({\it cf.} \ref{2.4.1}). Hence $ \FT(p) \,\circ\, \FT(i) = \id_{\FT(A)} $, and
by Proposition~\ref{lhom}, there is a morphism $\,h: B \rar B \otimes \Omega $ in $\DGA_S $
with $\,h(0)=\id_B\,$ and $\,h(1)=i\,p\,$. Lemma~\ref{cychom1} now implies that $ \FT(ip) \sim \id_{\FT(B)} $.
Hence $ \FT(i p) $ induces the identity map on homology, and therefore $ \FT(i) $ is a quasi-isomorphism.
\eproof
Theorem~\ref{FTT} implies that the homology of $\, \L \FT(S \bs A)\,$ depends only on the morphism $ S \to A $.
Thus, we may give the following

\vspace{1ex}

\begin{definition}
\la{S3D1}
The (relative) {\it cyclic homology} of $\, S \bs A \in \DGA_S \,$ is defined by
\begin{equation}
\la{hcyc}
\HC_{n-1}(S \bs A) := \H_{n}[\L \FT(S \bs A)] = \H_{n} [\cn(S_\n \to Q(S\bs A)_\n)]\ .
\end{equation}
\end{definition}

\vspace{1ex}

\remark\ 1. We draw reader's attention to the fact that the homology groups \eqref{hcyc} are defined
(and in general nonzero) in all degrees, including the negative ones. However, if $ S $
is non-negatively graded and $ S \bs A \in \DGA_S^+ $, then  $ \, \HC_{<0}(S \bs A) = 0 \, $.
In fact, if $ S \bs A \in \DGA_S^+ $, we can take for a cofibrant resolution of $ S \bs A $ in $ \DGA_S $ its
cofibrant resolution $ Q(S\bs A) $ in $ \DGA_S^+ $, and $ \cn(S_\n \to Q(S\bs A)_\n) $ is
then a non-negatively graded complex.

2.\ If $ S \to A $ is a map of ordinary algebras and $ S \stackrel{i}{\into} QA \sonto A $ is a
cofibrant resolution of $ S \to A $ such that $ i $ is an almost free extension in $ \DGA^+_S $,
then the induced map $\, i_\natural:\,S_\n \into (QA)_\n \,$ is injective, and
\begin{equation}
\la{cyc1}
\FT(S \bs QA) = \cn(i_\n) \cong (QA)_\natural/S_\natural \cong QA/([QA, QA] + i(S)) \ .
\end{equation}
In this special form, the functor $ \FT $ was originally introduced by Feigin and Tsygan in \cite{FT} (see also \cite{FT1}); they proved that the homology groups \eqref{hcyc} are independent of the choice of resolution using spectral sequences. Theorem~\ref{FTT} is not explicitly stated in \cite{FT, FT1}, although it is implicit in several calculations.  We  emphasize that, in the case when $ S $ and $A$ are ordinary algebras, our definition of relative cyclic homology
\eqref{hcyc} agrees with the Feigin-Tsygan one.

\subsection{A fundamental exact sequence for cyclic homology}
\la{S4.2}
To study the properties of the cyclic functor \eqref{cycd} it is convenient to extend it to the category $ \Mor(\DGA_k) $ of {\it all} morphisms in $ \DGA_k $. Thus, we define
\begin{equation}
\la{cycdn}
\eFT:\ \Mor(\DGA_k) \rar \Com(k)\ ,\quad (S \xrightarrow{f} A)
\mapsto \cn(f_\natural)\ .
\end{equation}
The category $ \Mor(\DGA_k) $ has a natural model structure induced from $ \DGA_k $ (see \ref{2.1.2m}), and the next
result shows that \eqref{cycdn} is well-behaved with respect to this structure.
\bthm
\la{TPT}
The functor \eqref{cycdn} has a total left derived functor
$ \L\eFT:\,\Ho(\Mor(\DGA_k)) \to \D(k) $ which is given by
\begin{equation}
\la{isff}
\L\eFT(S \bs A) = \cn[(QS)_\n \to Q(QS\bs A)_\n]\ ,
\end{equation}
where $ QS \sonto S $ is a cofibrant resolution of $S$ in $ \DGA_k $
and  $ Q(QS\bs A) $ is a cofibrant resolution of $ QS \to S \to A $
in $ \DGA_{QS} $.
\ethm
\bproof
We verify the assumptions of Brown's Lemma for $ \eFT $. Let $ A \to B $ be a
cofibrant object in $ \Mor(\DGA_k)$, which means that $ A \to B $ is a cofibration
in $ \DGA_k $, with $A$ being a cofibrant object.
Let
\begin{equation*}
\begin{diagram}[small, tight]
A                 & \rTo^{\alpha}    &  C         \\
\dTo          &                      & \dTo   \\
B                 & \rTo^{\beta}     &  D
\end{diagram}
\end{equation*}
be an acyclic cofibration in $ \Mor(\DGA_k) $, which means that
$ \alpha $ is an acyclic cofibration, $\, \beta $ is a weak equivalence, and
the induced map $\, B \coprod_A C \to D \,$ is a cofibration in $ \DGA_k $.
Since $ A \into B $ is a cofibration, the pushout $ B \to B \coprod_A C $ of $ \alpha $
along $ A \to B $ is an acyclic cofibration. Since
$\,\beta:\, B \to B \coprod_A C \to D $ is acyclic, the cofibration
$\, B \coprod_A C \to D \,$ is also acyclic. It follows that
$ \beta $ itself is an acyclic cofibration. The standard argument
(see the proof of Theorem~\ref{FTT} above) implies now that $ \alpha_\n $ and
$ \beta_\n $ are quasi-isomorphisms. Hence, the pair
$(\alpha_\n,\,\beta_\n) $ induces a quasi-isomorphism
$\,\eFT(A \bs B) \xrightarrow{\sim} \eFT(C \bs D) \,$.
By Brown's Lemma, $\, \L\eFT $ exists and is given by formula
\eqref{isff}, which is independent of the choice of resolutions.
\eproof
Next, we prove
\blemma
\la{LtP}
Given DG algebra maps $ R \xrightarrow{g} S \xrightarrow{f} A $, there is a distinguished triangle
in $ \D(k) $ of the form
\begin{equation}
\la{isct}
\L\eFT(R \bs S) \to \L\eFT(R \bs A) \to \L\eFT(S \bs A) \to
\L\eFT(R \bs S)[1]\ .
\end{equation}
\elemma
\bproof
Fix a cofibrant resolution $\, p_R: QR \sonto R \,$ in $ \DGA_k $. Then choose cofibrant resolutions $\, p_S: QS \sonto S \,$ of $ g \circ p_R $ in $ \DGA_{QR} $ and
$p_A: QA \sonto A\,$ of $ f\circ p_S $ in $ \DGA_{QS}$. Note that both $QS$ and $QA$ are cofibrant
in $ \DGA_k $, and there are liftings $ \tilde{g}: QR \into QS $ and  $ \tilde{f}: QS \into QA $ which are cofibrations.
Now, by Octahedron Axiom, associated to $ (QR)_\n \xrightarrow{\tilde{g}} (QS)_\n \xrightarrow{\tilde{f}} (QA)_\n $
is a distinguished triangle in $ \D(k)$ of the form
$$
\cn(\tilde{g}_\n) \to \cn(\tilde{f}_\n \circ \tilde{g}_\n) \to \cn(\tilde{f}_\n) \to \cn(\tilde{g}_\n)[1] \ .
$$
By \eqref{isff}, $\,\L\eFT(g) = \cn(\tilde{g}_\n)$, and similarly for $f$ and $ f \, g $. The above triangle is thus
isomorphic to \eqref{isct}.
\eproof
\blemma
\la{LtP1}
For any $ S \to A $ in $ \DGA_k $, there is a natural isomorphism
$\,\L\eFT(S\bs A) \cong \L\FT(S \bs A)\,$ in $ \D(k)$.
\elemma
\bproof
The inclusion functor $ i:\,\DGA_S \to \Mor(\DGA_k) $ maps
weak equivalences to weak equivalences and hence descends to the
homotopy categories $\,\bar{i}:\,\Ho(\DGA_S) \to \Ho(\Mor(\DGA_k))\,$. Since the derived functors
$ \L\FT $ and $ \L\eFT $ both exist, the universal property of derived functors yields a natural
transformation
\begin{equation}
\la{edfnas}
\L\eFT \circ \bar{i} \to \L\FT\ .
\end{equation}
We claim that \eqref{edfnas} is an isomorphism of functors $\,\Ho(\DGA_S) \to \D(k)\,$. To prove this,
we first verify that
\begin{equation}
\la{edfs}
\L\eFT(S\bs 0) \cong \L\FT(S\bs 0)\ .
\end{equation}
Let $ p: Q \sonto S $ be a cofibrant resolution of $S$ in $\DGA_k$. Then $\,\L\eFT(S\bs 0) \cong Q\langle x \rangle_\n/Q_\n\,$
and $\,\L\FT(S\bs 0) \cong S\langle x \rangle_\n/S_\n\,$, where $ dx =1 $, and
$ p $ induces an isomorphism $ \L\eFT(S\bs 0) \stackrel{\sim}{\to} \L\FT(S\bs 0)$. More generally,
if $ p: R \to S $ is any quasi-isomorphism in $ \DGA_k $, then $ p $ induces
$ \L\FT(R\bs 0) \stackrel{\sim}{\to} \L\FT(S\bs 0)$.
Hence, for $ S \to A \to 0 $, the same argument as in the proof of Lemma~\ref{LtP} shows that there is an exact triangle
\begin{equation}
\la{extr}
\L\FT(S \bs A) \to \L\FT(S \bs 0) \to \L\FT(A \bs 0)\to \L\FT(S \bs A)[1]\ .
\end{equation}
By \eqref{edfs}, we now conclude
that $\,\L\eFT(S\bs A) \cong \L\FT(S \bs A)\,$ for all $ S \to A $.
\eproof
As a consequence of Lemma~\ref{LtP} and Lemma~\ref{LtP1}, we
get the following result proved in \cite{FT} by a different method
(see \cite{FT}, Theorem~2).
\bthm
\la{S4C1}
Given DG algebra maps $ R \xrightarrow{} S \xrightarrow{} A $,
there is a long exact sequence in cyclic homology
\begin{equation}
\la{ex2}
\ldots \to \HC_n(R \bs S) \to \HC_n(R \bs A) \to \HC_n(S \bs A) \to \HC_{n-1}(R \bs S) \to \ldots
\end{equation}
\ethm
\bproof
By Lemma~\ref{LtP} and Lemma~\ref{LtP1}, there is a distinguished
triangle in $ \D(k)$ of the form
$$
\L\FT(R\bs S) \to \L\FT(R\bs A) \to \L\FT(S\bs A) \to \L\FT(R\bs S)[1]
$$
The exact sequence \eqref{ex2} arises from applying the homology functor to this triangle.
\eproof
\subsection{Connes' cyclic complex}
\la{S4.3}
If $A$ is an ordinary algebra over a field of characteristic zero,
its {\it cyclic homology} $ \HC_\bullet(A) $ is usually defined  as the homology of the cyclic complex $ \CC(A) $ ({\it cf.} \cite{L}, Sect.~2.1.4):
\begin{equation}
\la{ccc}
\CC_n(A) := A^{\otimes (n+1)}/\im(\id-t_n)\ , \quad b_n:\,\CC_n(A) \to \CC_{n-1} (A)\ ,
\end{equation}
where $ b_n $ is induced by the standard Hochschild differential and
$ t_n $ is the cyclic operator defining an action of $ \Z/(n+1)\Z $ on $ A^{\otimes (n+1)} $:
\begin{equation}
\la{cop}
t_n:\, A^{\otimes (n+1)} \to A^{\otimes (n+1)}\ , \quad (a_0,\, a_1, \,\ldots \,, \,a_n) \mapsto
(-1)^{n}(a_n,\, a_0, \,\ldots \,, \,a_{n-1})\ .
\end{equation}
%
%$$
%b_n(a_0,\, a_1, \,\ldots \,, \,a_n) = \sum_{i=0}^{n-1}\,(-1)^i\, (a_0,\,\ldots \,,\,a_i a_{i+1}\,,\,\ldots\,,\,a_n)
%+ (-1)^n (a_n a_0,\,\ldots \,, \,a_{n-1})\ .
%$$
%
The complex $ \CC(A) $ contains the canonical subcomplex $ \CC(k) $; the homology of the corresponding quotient complex $\, \rHC_\bullet(A) := \H_\bullet[\CC(A)/\CC(k)]\,$
is called the {\it reduced cyclic homology} of $A$. Both $ \HC(A) $
and $ \rHC(A) $ are special cases of relative cyclic homology in the sense of Definition~\eqref{hcyc}. Precisely, we have the following
result (due to Feigin and Tsygan \cite{FT}).
\bprop
\la{S4P1}
For any $k$-algebra $A$, there are canonical isomorphisms

\vspace{1ex}

$(a)$\ $\,\HC_n(A) \cong \HC_n(A \bs 0) \,$ for all $\,n\ge 0\,$,

\vspace{1ex}

$(b)$\ $\,\rHC_n(A) \cong \HC_{n-1}(k \bs A) \,$ for all $\,n\ge 1\,$.

\eprop
\bproof
$(a)$\ For any algebra $A$,
the canonical morphism $\, A \into A\langle x \rangle \,$ provides a cofibrant resolution of $ A \to 0 $ in $ \DGA_A $. In this case, we can identify
$$
\L\FT(A\bs 0) \cong \cn(A_\n \to A\langle x \rangle_\n) \cong
A\langle x \rangle/(A + [A\langle x \rangle,\,A\langle x \rangle])
\cong \CC(A)[1]\ ,
$$
where the last isomorphism (in degree $ n>0$) is given by
$\,
a_1 \,x\, a_2\, x\, \ldots\, a_n\, x \leftrightarrow a_1 \otimes a_2 \otimes \ldots \otimes a_n\,$.
On the level of homology, this induces isomorphisms
$ \HC_{n-1}(A \bs 0) \cong \H_n(\CC(A)[1]) = \HC_{n-1}(A) $.

$(b)$ With above identification, the triangle \eqref{extr} associated to the canonical maps $\,k \to A \to 0 \,$ yields
$$
\L\FT(k\bs A) \cong
\cn(\L\FT(k\bs 0) \to \L\FT(A \bs 0))[-1] \cong
\cn[\CC(k) \to \CC(A)]\ .
$$
Whence $ \HC_{n-1}(k \bs A) \cong \rHC_n(A) $ for all $ n \ge 1 $.
\eproof

\vspace{1ex}

\remark\ The isomorphism of Proposition~\ref{S4P1}$(a)$ justifies the shift of indexing in our definition \eqref{hcyc} of
relative cyclic homology. In \cite{FT, FT1}, cyclic homology
is referred to as an {\it additive} $K$-theory, and a different notation is used. The relation between
the Feigin-Tsygan notation and our notation is $\,  K_{n}^+(A,S) = \HC_{n-1}(S \bs A)$ for all $\,n \ge 1\,$.

\vspace{1ex}

As consequence of Theorem~\ref{S4C1} and Proposition~\ref{S4P1}$(a)$, we get the fundamental exact sequence associated
to an algebra map $\, S \to A \,$:
\begin{equation*}
\la{ex1}
\ldots \to \HC_n(S\bs A) \to \HC_n(S) \to \HC_n(A) \to \HC_{n-1}(S\bs A) \to \ldots \to \HC_0(S) \to \HC_0(A) \to 0\ .
\end{equation*}
In particular, if we take $ S = k $ and use the isomorphism of Proposition~\ref{S4P1}$(b)$, then \eqref{ex1} becomes
\begin{equation}
\la{rseq}
\ldots \to \HC_n(k) \to \HC_n(A) \to \rHC_n(A) \to \HC_{n-1}(k) \to \ldots \to \HC_0(A) \to \rHC_0(A) \to 0\ .
\end{equation}
\subsection{Relative Hochschild homology}
\la{S3.4}
We now briefly explain how to describe Hochschild homology in the Feigin-Tsygan approach. For this,
we consider the following simple modification of the functor \eqref{cycd}:
\begin{equation}
\la{hoch}
\Hoch:\ \DGA_S \rar \Com(k)\ ,\quad (S \to A) \mapsto
\cn(S_\n \to A_{\n,S})\ ,
\end{equation}
where $\,A_{\n,S} := A/[A,S]\,$. The same arguments as in Lemma~\ref{cychom1} and Theorem~\ref{FTT}
show that $ \Hoch $ has a total left derived functor $\,\L\Hoch:\,\Ho(\DGA_S) \to \D(k)\,$, which is
given by
\begin{equation}
\la{hoch1}
\L\Hoch(S\bs A) =  \cn(S_\n \to (QA)_{\n,S})\ .
\end{equation}
Thus, we can define
\begin{equation}
\la{hoch2}
\HH_{n-1}(S\bs A) := \H_n[\L\Hoch(S\bs A)] =  \H_n[\cn(S_\n \to (QA)_{\n,S})]\ .
\end{equation}
Applying \eqref{hoch} to the canonical resolution $ S \into S\langle x \rangle $ of
the zero morphism $ S \to 0 $ in $ \DGA_S $, we get
$$
\L\Hoch(S\bs 0) \cong S\langle x \rangle/(S + [S, S\langle x \rangle]) \cong C_\bullet(S,S)[1]\ ,
$$
where $ C_\bullet(S,S) $ is the standard Hochschild chain complex of $S$. Thus,
$$
\HH_n(S\bs 0) \cong \HH_n(S) \ ,\quad \forall\, n\ge 0 \ .
$$

In general, given an arbitrary morphism $\, S \to A \,$ in $ \Alg_k$, we can view $A$ as a bimodule over $S$ and
the map $\, S \to A \,$ itself can then be identified with the 2-term complex $\,0 \to S \to A \to 0 \,$ in the derived
category of $ S$-bimodules. With this identification, we have isomorphisms
\begin{equation}
\la{hoch3}
\HH_{n-1}(S\bs A) \cong \mathbb{HH}_n(S,\,S \to A)\ ,\quad n \ge 1\ ,
\end{equation}
where $\, \mathbb{HH}_n(S,\,\mbox{--}\,) :=  \H_n(S \otimes^{\L}_{S^{e}} \mbox{--}\,)\,$ denotes
the $n$-th Hochschild (hyper)homology of $S$-bimodules. We leave the proof of this result
as an exercise to the reader.

\section{Trace Maps}
\la{TraceM}
In this section, we construct canonical trace maps
$\,\Tr_V(S \bs A)_n:\,\HC_{n-1}(S \bs A) \to \H_n(S \bs A, V)\,$ relating the cyclic homology of an $S$-algebra
$ A \in \Alg_S $ to its representation homology. In the case when $ S = k $ and $V$ is concentrated in degree
$ 0 $, these maps can be viewed as {\it derived characters} of finite-dimensional representations of $A$.
We study functorial properties of  $ \Tr_V(S \bs A)_\bullet $ and construct an explicit formula
for these traces in terms of Connes' cyclic complex. We also construct `noncommutative' trace maps
% $\,\NTr_V(S \bs A)_{n}:\,\HH_{n-1}(S \bs A) \to \H_n(\L \rtv{S\bs A})\,$
defined on Hochschild homology of the morphism $ S \to A $.

\subsection{Main construction}\la{S4.1e}
Let $V$ be a complex of $k$-vector spaces of total dimension $d$. The natural map
$\, k \into \END(V) \onto \END(V)_\n \,$ is an isomorphism of complexes, which we can use to
identify $\, \END(V)_\n = k \,$. This defines a canonical (super) trace map
$\,\Tr_V:\,\END\,V \to k \,$ on the DG algebra $ \END\,V $. Explicitly, $ \Tr_V $
is given by
$$
\Tr_V(f) = \sum_{i=1}^d (-1)^{|v_i|} f_{ii}\ ,
$$
where $ \{v_i\} $ is a homogeneous basis in $V$ and $ \| f_{ij} \| $
is the matrix representing $\,f \in \END\,V \,$ in this basis.

Now, fix $\, S \in \DGA_k \,$ and a DG algebra map
$\,\varrho:\,S \to \END\,V \,$ making $V$ a DG module over $S$.
For an $S$-algebra $ A \in \DGA_S $, consider the (relative) DG representation
scheme $ \Rep_V(S \bs A) $, and let $\,\pi_V:\,A \to \END\,V \otimes (S \bs A)_V \,$
denote the universal representation of $A$ corresponding to the identity map
in the adjunction of Proposition~\ref{S2P1}$(b)$. Consider the morphism of complexes
\begin{equation}
\la{E1S4}
A \xrightarrow{\pi_V} \END\,V \otimes (S \bs A)_V \xrightarrow{\Tr_V \otimes \id } (S \bs A)_V\ .
\end{equation}
Since $ \pi_V $ is a map of $S$-algebras, and the $S$-algebra structure on $ \END\,V \otimes (S \bs A)_V $
is of the form $ \varrho \otimes \id $, \eqref{E1S4} induces a map
$\, \Tr_V \circ \pi_V: \, A_\n \to (S \bs A)_V $, which fits in the commutative diagram
\begin{equation}
\la{D1S4}
\begin{diagram}[tight, small]
  S_\n                  &  \rTo^{}  & A_\n  \\
\dTo^{\Tr_V\circ \varrho} &           & \dTo_{\Tr_V \circ \pi_V} \\
  k                     &  \rTo^{}  & (S \bs A)_V
\end{diagram}
\end{equation}
This, in turn, induces a morphism of complexes
\begin{equation}
\la{E2S4}
\cn(S_\n \to A_\n) \to \overline{(S \bs A)}_V\ ,
\end{equation}
where we write $\, \bar{R} = R/k\cdot 1_R \,$ for a unital DG algebra $R$.
The family of morphisms \eqref{E2S4} defines a natural transformation of functors
$\,\Tr_V:\, \FT \to \overline{(\mbox{---})}_V\,$, which descends to a natural
transformation of the derived functors (see \ref{L1S4}):
\begin{equation}
\la{E3S4}
\Tr_V:\, \L \FT \to  \overline{\L(\mbox{---})}_V \ .
\end{equation}

Next, observe that, for any (non-acyclic) unital DG algebra $ R $,
\begin{equation}
\la{hvan}
\H_{n}(\bar{R}) \cong
\left\{
\begin{array}
{lcl}
\overline{\H_0(R)} & , & n = 0 \\*[1ex]
\H_{n}(R)           & , & n \not= 0
\end{array}
\right.
\end{equation}
This is immediate from the long homology sequence arising from
$ 0 \to k \to  R \to \bar{R} \to 0 $.
Hence, if $ A \in \Alg_S $ is an ordinary algebra, applying the natural transformation
\ref{E3S4} to $ S \bs A $ and using \eqref{hvan}, we can define
\begin{equation}
\la{char}
\Tr_V(S \bs A)_n:\ \HC_{n-1}(S\bs A) \to \H_{n}(S \bs A, V)\ ,\ n \ge 1\ .
\end{equation}
Assembled together, these trace maps define a homomorphism of
graded commutative algebras
\begin{equation}
\la{lchar}
\bL \Tr_V(S \bs A)_\bullet :\ \bL(\HC(S\bs A)[1]) \to \H_{\bullet}(S \bs A, V)
\ ,
\end{equation}
where $ \bL $ denotes the graded symmetric algebra of a graded $k$-vector space $W$.

\subsection{The absolute case}
\la{abscase}
We examine the trace maps \eqref{char} and \eqref{lchar} in the special case
when $S = k$ and $V$ is a single vector space concentrated in degree $0$. In this case, by Proposition~\ref{S4P1}, the maps \eqref{char} relate
the reduced cyclic homology of $A$ to the (absolute) representation
homology:
\begin{equation}
\la{char1}
\Tr_V(A)_n:\ \rHC_{n}(A) \to \H_{n}(A, V)\ ,\ n \ge 1\ .
\end{equation}
Now, for each $n$, there is a natural map $ \HC_n(A) \to \rHC_n(A) $ induced by the projection of complexes
$ \CC(A) \onto \overline{\CC}(A) $, {\it cf.} \eqref{rseq}. Combining this map with \eqref{char1}, we get
\begin{equation}
\la{char2}
\Tr_V(A)_n:\ \HC_{n}(A) \to \H_{n}(A, V)\ , \ n\ge 0\ .
\end{equation}
Notice that \eqref{char2} is defined for all $\, n $, including $ n = 0 $. In the latter case,
$\, \H_0(A, V) \cong A_V \,$ (by Theorem~\ref{S2T4}), and $\, \Tr_V(A)_0:\, A_\n \to A_V\,$ is
the usual trace induced by $\,A \xrightarrow{\pi_V} A_V \otimes \End_k V \xrightarrow{\id \otimes \Tr} A_V\,$.

The linear maps \eqref{char2} define an algebra homomorphism
\begin{equation}
\la{lchar1}
\bL \Tr_V(A)_\bullet :\ \bL[\HC(A)] \to \H_{\bullet}(A, V)\ .
\end{equation}
Since, for $n\ge 1$, \eqref{char2} factor through reduced cyclic homology, \eqref{lchar1} induces
\begin{equation}
\la{trhomr}
\overline{\Tr}_V(A)_\bullet :\ \Sym(\HC_0(A)) \otimes \bL(\rHC_{\ge 1}(A)) \to \H_\bullet(A,\,V)\ ,
\end{equation}
where $ \rHC_{\ge 1}(A) := \bigoplus_{n \ge 1} \rHC_{n}(A) $.
\bprop
The image of the maps \eqref{lchar1} and \eqref{trhomr} is contained in $ \H_\bullet(A, V)^{\GL(V)}$.
\eprop
\bproof
Recall that, when $V$ is concentrated in degree $0$, we have the invariant subfunctor of $ (\mbox{---})_V $:
$$
(\mbox{---})_V^{\GL} :\ \DGA_k \to \cDGA_k\ ,\quad A \mapsto A_V^{\GL(V)}\ .
$$
It is clear from \eqref{E1S4} that the natural transformation $ \Tr_V $ maps $ \FT $ to this subfunctor:
\begin{equation}
\la{E8S4}
\Tr_V:\ \FT \to \overline{(\mbox{---})}_V^{\GL}
\end{equation}
Now, by Theorem~\ref{S2P4}, $\, (\mbox{---})_V^{\GL} $ has a total left derived functor, which is isomorphic to
the invariant subfunctor $ \L(\mbox{---})_V^{\GL}$ of $ \L(\mbox{---})_V $. By \ref{L1S4},
\eqref{E8S4} descends to the homotopy categories, i.e. induces a natural transformation
$\, \Tr_V:\, \L\FT \to \overline{\L(\mbox{---})}_V^{\GL}\,$. On the homology level, this gives
\begin{equation}
\la{E9S4}
\bL \Tr_V(A)_\bullet :\ \bL(\HC(A)) \to \H_{\bullet}(A, V)^{\GL(V)}\ ,
\end{equation}
which is the statement of proposition.
\eproof
\subsection{Trace formulas}
\la{S4.4}
The trace maps \eqref{char2} are linear functionals defined on the cyclic homology of
the algebra $A$. Given a cofibrant resolution $ R \sonto A $, these functionals arise from certain
cyclic cocycles with values in $ R_V $. Our next goal is to construct an explicit morphism
of complexes $\, T:\,\CC(A) \to R_V \,$ that induces the maps \eqref{char2}. We will use some basic
tools of differential homological algebra, which we will briefly introduce in the next section.
The standard reference for this material is \cite{HMS}.

\subsubsection{The bar construction and twisting cochains}
Let $ R $ be a non-negatively graded (nonunital) DG algebra with
multiplication map $\,m: R \otimes R \to R \,$, and let $ C $ be a (noncounital) DG coalgebra
with comultiplication map $ \Delta: C \to C \otimes C $. We assume that the
differentials on both $ R $ and $C$ have degree $-1$.
The homogeneous linear maps $\, C \to R \,$ then form a cochain DG algebra
\begin{equation}
\la{halg}
\HOM^\bullet(C, R) := \bigoplus_{n \in \Z} \HOM^n(C, R) \ ,
\end{equation}
with $n$-th graded component $ \HOM^n(C, R) $ consisting of maps of degree $\, -n $.
The differential on $ \HOM^\bullet(C, R) $ has degree $+1$ and is given by
$\,d f :=  d_R \, f - (-1)^{n} f \, d_C\,$, where $ f \in \HOM^n(C, R) $.
The product is defined by convolution $\, f \cdot g := m \, (f \otimes g) \, \Delta\,$.

Now, a {\it twisting cochain} from $ C $ to $ R $ is a linear map $ \theta:\,C \to R $ of degree $ -1 $
satisfying the equation
\begin{equation}
\la{twc}
d_R \, \theta + \theta \, d_C + m \, (\theta \otimes \theta) \, \Delta = 0\ .
\end{equation}
Equivalently, $ \theta $ is an element of $ \HOM^1(C, R) $ satisfying $\, d\theta + \theta^2 = 0 \,$.
If $ \theta: C \to R $ is a twisting cochain on $C$ and
$ f: C' \to C $ is a map of DG coalgebras, then $ \theta' = \theta \,f $ is
a twisting cochain on $ C' $.  In this case, we say that $ \theta' $ is induced from $ \theta $ by the
coalgebra map $f$. The bar construction of $R$ is a DG coalgebra, which is a universal model
for twisting cochains with values in $R$. It is defined as follows.

Let $\, \Ta(R[1]) := \bigoplus_{n \ge 1} R[1]^{\otimes n} \,$ be the reduced tensor coalgebra generated
by the graded vector space $ R[1] $.
%with coproduct
%$\,
%\Delta(a_1, \ldots, a_n) = \sum_{i=1}^n (a_1, \ldots, a_i) \otimes (a_{i+1},\ldots, a_n)
%\,$.
%
Write $\,p:  \Ta(R[1]) \onto R[1] \,$ for the canonical projection and $\,s: R \to R[1] \,$ for the natural map
of degree $ +1 $, giving isomorphisms $ R_i \cong R[1]_{i+1} $ for all $i$. Note that
$\,d_{R[1]} s + s\,d = 0 \,$. Now, there are two differentials on $ \Ta(R[1]) \,$. The first differential
$ \partial $ is the (unique) coderivation of degree $-1$ lifting the differential on $ R[1] $,
i.~e. $\,p\,\partial = d_{R[1]}\,p\,$. The second differential $ \delta $ lifts the multiplication
map on $R$ so that $\,p\,\delta = \tilde{m}\,(p \otimes p)\,\Delta\,$, where
$ \tilde{m}: R[1] \otimes R[1] \to R[1] $ is a map of degree $-1$
defined by $\,\tilde{m} \,(s \otimes s) = s\,m \,$. Note that $ \partial $
depends only on the underlying complex of $ R $, while $ \delta $ depends on
the multiplicative structure on $R$. It is easy to check that
\begin{equation*}
\la{cdr}
\partial^2 = 0\ ,\quad \partial\,\delta + \delta\,\partial = 0\ ,\quad \delta^2 = 0 \ .
\end{equation*}
The {\it bar construction} $ \Ba(R) $ of $R$ is defined to be the coalgebra $ \Ta(R[1]) $
equipped with the total differential $ d_B := \partial + \delta $. This DG coalgebra
comes together with the canonical map $ \hat{\theta} := s^{-1} p:\, \Ba(R) \to R $, which
is the {\it universal} twisting cochain with values in $R$ (see \cite{HMS}, Prop.~II.3.5).

Now, given a DG coalgebra $C$ and a DG algebra $R$, we write $\,\natural:\,C^{\natural} \into C \,$ for
the natural inclusion of the cocommutator subcomplex of $ C $ and $\, \tau:\, R \onto R_\natural \,$ for
the natural projection onto the quotient by the commutator complex of $R$.
The map $\,\natural\,$ is the universal cotrace for $ C $ in the same way as $\, \tau \,$ is the
universal trace for $ R $. We will need two simple lemmas.
\blemma
\la{ltr}
$(a)$ The linear map
$\,
\tau^\natural:\ \HOM^\bullet(C, R) \to \HOM^\bullet(C^\natural, R_\natural)\, ,\, f \mapsto \tau f \natural \,$,
is a closed trace.

$(b)$ If $ \theta:\,C \to R $ is a twisting cochain, then
$\,
s\tau^\natural(\theta):\,C^\natural \to R_\natural[1]\,
$
is a morphism of complexes.
\elemma
\bproof
$(a)\,$ It is clear that $ \tau^\natural $ commutes with differentials, since so do both $ \tau $ and $ \natural $.
To see that $ \tau^\natural $ vanishes on commutators, we check
$$
\tau^\natural(f \cdot g) = \tau\,m\,(f \otimes g)\,\Delta\,\natural =
\tau\,m\,\sigma(f \otimes g)\,\sigma\,\Delta\,\natural = (-1)^{|f|\,|g|}
\tau\,m\,(g \otimes f)\,\Delta\,\natural = (-1)^{|f|\,|g|} \tau^\natural(g \cdot f)
$$
for any $ f,g \in \HOM^\bullet(C, R) $. Thus $ \tau^\natural $ is a closed trace on the DG algebra
$ \HOM^\bullet(C, R) $.

$(b)\,$ Recall that $s: R \to R[1] $ is the canonical map of degree $+1$ satisfying $ s\,d_R + d_{R[1]} s = 0 $.
Hence $ s\tau^\natural(\theta) $ is a degree zero element in $ \HOM^\bullet(C^\natural, R_\natural[1]) $
satisfying the equation
\begin{eqnarray*}
d[s\tau^\natural(\theta)] &=& d_{R[1]} s\,\tau^\natural(\theta) - s\,\tau^\natural(\theta)\,d_C
                          = - s\,(d_{R}\,\tau^\natural(\theta) + \tau^\natural(\theta)\,d_C)\\
                          &=& - s\,d[\tau^\natural(\theta)] = -s\,\tau^\natural(d \theta) =
                          s \tau^\natural(\theta^2) = \frac{1}{2}\,s \tau^\natural([\theta,\theta]) = 0   \ ,
\end{eqnarray*}
where the last equality holds by part $(a)$.
\eproof
\blemma
\la{lqsm}
Let $ R $ be an almost free DG algebra in $ \DGA_k^+$.
Then the trace map defined by the universal twisting cochain in $R$ induces the quasi-isomorphism
\begin{equation}
\la{qtsm}
s \tau^\natural(\hat{\theta}):\ \Ba(\bar{R})^\natural \,\stackrel{\sim}{\to}\, \bar{R}_\natural[1]\ ,
\end{equation}
where $\,\Ba(\bar{R})^\natural := \Ba(R)^\natural/\,\Ba(k)^\natural\,$ and $\,\bar{R}_\natural := R_\n/k $.
\elemma
\bproof
It is clear that the induced map \eqref{qtsm} is well defined, since $ \hat{\theta} $ carries the subcomplex
$ \Ba(k) \subset \Ba(R) $ to the image of $ k $ in $R$.
To prove that \eqref{qtsm} is a quasi-isomorphism consider the first quadrant double complex $(\Ba(\bar{R})^\natural,\,\partial,\,\delta) $. Its column in degree $ p $ is the invariant subcomplex of $ \bar{R}^{\otimes (p+1)} $ under cyclic permutations (see \cite{Q4}, Prop.~3.3). The horizontal differential $ \delta $ is the natural extension to graded algebras of
the usual boundary operator $b'$, and the total complex $ (\Ba(\bar{R})^\natural,\,d_B) $ can be
identified (up to shift) with the reduced cyclic complex of $R$ (see \cite{Q4}, Sect.~3.3, or Theorem~\ref{TQ} below).
Now, observe that \eqref{qtsm} is precisely an edge homomorphism for the spectral sequence
\begin{equation}
\la{spsq}
E^2_{p,q} = \H_p(\partial,\,\H_q(\Ba(\bar{R})^\natural, \delta))\ \Rightarrow\ \H_n(\Ba(\bar{R})^\natural,\,d_B)\ ,
\end{equation}
associated to $(\Ba(\bar{R})^\natural,\,\partial,\,\delta) $.
(In this spectral sequence, one is taking first homology of the rows and then homology of the columns.)

When $k$ has characteristic zero, the reduced cyclic homology of a free (tensor) algebra $ T_k V $ vanishes
in degrees $ q > 0 $ (see \cite{L}, Theorem~3.1.6). Since $ R $ is such an algebra (with differential forgotten),
the row homology of $ \Ba(\bar{R})^\natural $ vanishes in higher degrees, i.e. $ \H_q(\Ba(\bar{R})^\natural, \delta) = 0 $
for all $ q > 0 $. This means that the spectral sequence \eqref{spsq} collapses, and its edge homomorphism \eqref{qtsm} is
indeed a quasi-isomorphism.
\eproof
\subsubsection{The norm maps}
\la{ccnm}
We now consider the special case when $ C = \Ba(A) $ is the bar construction of an ordinary algebra.
The $\partial$-differential on $ \Ba(A) $ is zero in this case, so the $ \delta$-differential
coincides with the bar differential, which is traditionally denoted $ b' \,$:
\begin{equation}
\la{bprime}
b' = \sum_{i=1}^{n-1} (-1)^{i-1} (\id_A^{\otimes (i-1)} \otimes m_A \otimes \id_A^{\otimes (n-1-i)})\ .
\end{equation}
Next, we define the following norm maps
\begin{equation}
\la{norm}
N_n:\,\CC_{n-1}(A) \to \Ba(A)_n\ ,\quad N_n := \sum_{k=0}^{n-1}\, t^k\ ,\quad n\ge 1\ ,
\end{equation}
where $\, t:\,(a_1, a_2, \ldots, a_n) \mapsto (-1)^{n-1}(a_n, a_1, \ldots a_{n-1})\, $ is a cyclic operator.
The maps \eqref{norm} are compatible with differentials and thus give a morphism of complexes
\begin{equation}
\la{norm1}
N:\, \CC(A)[1] \to \Ba(A)\ .
\end{equation}
The relevance of \eqref{norm1} becomes clear from the following
theorem, which is one of the main observations of \cite{Q3} (see {\it op. cit.}, Lemma~1.2).
\begin{theorem}[Quillen]
\la{TQ} For any algebra $A$, the map \eqref{norm1} is injective, and its image
is precisely $ \Ba(A)^\natural $. Thus, we have the isomorphism of complexes $\,N:\,\CC(A)[1] \cong
\Ba(A)^{\natural}\,$.
\end{theorem}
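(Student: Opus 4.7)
The plan is to combine an explicit computation of the cocommutator subspace $\Ba(A)^{\natural}$ with the standard fact that, over a field of characteristic zero, the norm map for a cyclic group action induces an isomorphism between coinvariants and invariants.

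First, I would unwind the definition of $\Ba(A)^{\natural}$. By construction, this is the kernel of $\bar{\Delta} - \tau\bar{\Delta}:\Ba(A) \to \Ba(A)\otimes \Ba(A)$, where $\bar{\Delta}$ is the reduced deconcatenation coproduct and $\tau$ is the graded swap. Direct computation on a homogeneous tensor $sa_1 \otimes \cdots \otimes sa_n$ in $A[1]^{\otimes n}$, keeping track of the Koszul signs coming from the shift, shows that such an element lies in $\ker(\bar{\Delta}-\tau\bar{\Delta})$ iff it is invariant under the signed cyclic operator $t = t_{n-1}$ from \eqref{cop}. Hence, as graded vector spaces, $\Ba(A)^{\natural}_n = (A^{\otimes n})^{\langle t\rangle}$.

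Second, I would verify that $N$ takes values in $\Ba(A)^{\natural}$ and descends to cyclic coinvariants. Observe that $t^n = \id$ (the overall sign is $(-1)^{(n-1)n}=1$). Consequently $(1-t)\,N_n = 1 - t^n = 0$ and $N_n\,(1-t) = 1 - t^n = 0$: the first identity shows $\im N_n \subseteq (A^{\otimes n})^{\langle t\rangle} = \Ba(A)^{\natural}_n$, while the second shows $N_n$ factors through $\CC_{n-1}(A) = A^{\otimes n}/\im(1-t)$. We thus obtain a well-defined graded map $N:\CC(A)[1]\to \Ba(A)^{\natural}$.

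Third, I would prove bijectivity using characteristic zero. Since $\mathrm{char}\,k = 0$, the group algebra $k[\Z/n\Z]$ is semisimple, so $A^{\otimes n}$ splits as $(A^{\otimes n})^{\langle t\rangle} \oplus M'$, with $M'$ the sum of non-trivial isotypic components. On $M'$ the operator $1-t$ is invertible (its eigenvalues are $1-\zeta$ for primitive $n$-th roots of unity $\zeta\ne 1$), so the projection $(A^{\otimes n})^{\langle t\rangle} \hookrightarrow A^{\otimes n} \twoheadrightarrow \CC_{n-1}(A)$ is an isomorphism. Under this identification, $N_n$ restricts to multiplication by $n$ on $(A^{\otimes n})^{\langle t\rangle}$ and vanishes on $M'$, so $\bar N_n : \CC_{n-1}(A) \to \Ba(A)^{\natural}_n$ is an isomorphism.

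Finally, the bijection $N$ automatically matches the two differentials: the standard identity $b'\,N_n = N_{n-1}\,b$, coming from $(1-t)\,b' = b\,(1-t)$ (the compatibility underlying Tsygan's bicomplex), combined with the factorization of $N_n$ through coinvariants, shows that $N$ intertwines Connes' boundary $b$ on $\CC(A)[1]$ with the restriction of $b'$ to $\Ba(A)^{\natural}$. The main delicate point will be carrying out Step 1 with the correct Koszul signs; once the identification $\Ba(A)^{\natural}_n = (A^{\otimes n})^{\langle t\rangle}$ is pinned down, the remaining steps are essentially formal, resting only on the semisimplicity of $k[\Z/n\Z]$ in characteristic zero.
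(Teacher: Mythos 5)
Your argument is correct. Note that the paper does not actually prove this statement: it is quoted from Quillen's paper (Lemma~1.2 of \cite{Q3}) with no proof supplied, so there is nothing in the text to compare against; your write-up is essentially the standard argument (and, as far as I can tell, Quillen's own). The two points carrying the real content are exactly the ones you isolate. First, the identification $\Ba(A)^{\natural}_n=(A^{\otimes n})^{\langle t\rangle}$: the component of $(\bar{\Delta}-\tau\bar{\Delta})(x)$ in the $(j,n-j)$ slot is $(1-t^j)(x)$, once one checks that the Koszul block-swap sign $(-1)^{j(n-j)}$ on $A[1]^{\otimes n}$ agrees with the sign $(-1)^{j(n-1)}$ carried by $t^j$ from \eqref{cop} (it does, since $j\equiv j^{2} \bmod 2$); then $\bigcap_{j}\ker(1-t^{j})=\ker(1-t)$ because the $j=1$ condition already implies the others. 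Second, the semisimplicity of $k[\Z/n\Z]$, which is precisely what makes the image of $N$ coincide with the full invariant subspace; this is where the standing hypothesis $\mathrm{char}\,k=0$ enters, and the "image is precisely $\Ba(A)^{\natural}$" clause genuinely fails without it. One small inaccuracy: the identity $b'\,N=N\,b$ does not "come from" $(1-t)\,b'=b\,(1-t)$ — they are two independent consequences of the simplicial identities relating the face maps to the cyclic operator — but both are standard, and in any case the paper asserts compatibility with the differentials before the theorem, so only injectivity and the computation of the image are at stake.
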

We will use Theorem~\ref{TQ} to identify $ \Ba(A)^\natural = \CC(A)[1]$. With this identification, the
trace map of Lemma~\ref{ltr} becomes
$$
\tau^\natural: \ \HOM^\bullet(\Ba(A),\,R) \to \HOM^\bullet(\CC(A)[1],\,R_\natural)\ ,\quad f \mapsto \tau\,f\,N\ .
$$
Explicitly, this map takes a cochain $\, f: \Ba(A) \to R \,$ with components
$\, f_n: A^{\otimes n} \to R \,$, $ n \geq 1\,$, to the cyclic cochain
$ \tau^\natural(f):\, \CC(A)[1] \to R_\natural $ with components $\,\tau^\natural(f)_n:\,\CC_{n-1}(A) \to R_\natural \,$
given by
\begin{eqnarray}
\la{cch}
\tau^\natural(f)_n(a_1, a_2, \ldots, a_n) &=& \sum^{n-1}_{k=0} (-1)^{k(n-1)}\tau[f_n(a_{n-k+1}, \ldots, a_n, a_1, \ldots, a_{n-k})] \\
                 &=& \sum_{k \in \Z_n} (-1)^{k(n-1)} f_n(a_{1+k}, a_{2+k}, \ldots, a_{n+k})\quad \mbox{mod}\,[R,R]\ ,  \nonumber
\end{eqnarray}
where the last sum is taken over the elements of the cyclic group $ \Z_n $.
In particular, by Lemma~\ref{ltr}$(b)$, a twisting cochain $ \theta: \Ba(A) \to R $ with components
$ f_n: A^{\otimes n} \to R_{n-1} $, $\,n\ge 1 \,$, produces a map of complexes
\begin{equation}
\la{cch1}
s\tau^\natural(\theta):\, \CC(A) \to R_\natural\ ,
\end{equation}
which in degree $ (n-1) $ is given by formula \eqref{cch}.

\subsubsection{Trace formulas}
\la{aff}
Let $ \pi: R \sonto A $ be an almost free resolution of an algebra $A$ in $ \DGA_k^+$.
By functoriality of the bar construction, the map $ \pi $ extends to a surjective quasi-isomorphism
of DG coalgebras $ \Ba(R) \sonto \Ba(A)$, which we still denote by $ \pi $. This quasi-isomorphism has a
section $ f: \, \Ba(A) \into \Ba(R) $ in the category of DG coalgebras, which is uniquely determined
by the twisting cochain $ \theta_\pi := \hat{\theta}\,f:\, \Ba(A) \to R $.
The components $\,f_n:\,A^{\otimes n} \to R_{n-1}\,$, $\,n \ge 1\,$, of $ \theta_\pi $ satisfy the equations
\begin{eqnarray}
&& \pi\, f_1 = \id_A \la{rr1} \\
&& d_R f_2 =  f_1 m_A - m_R(f_1 \otimes f_1) \la{rr2} \\
%&& d_R f_3 =  f_2(m_A \otimes \id_A - \id_A \otimes m_A) + m_R(f_2 \otimes f_1 - f_1 \otimes f_2) \la{rr3} \\
&& d_R f_n =  \sum_{i=1}^{n-1} (-1)^{i-1} f_{n-1}(\id_A^{\otimes(i-1)} \otimes m_A \otimes \id_A^{\otimes (n-1-i)})
+ \sum_{i=1}^{n-1} (-1)^{i} m_R(f_i \otimes f_{n-i})\ , \quad n\ge 2\ . \la{rrn}
\end{eqnarray}
Giving the maps $ \,f_n:\,A^{\otimes n} \to R_{n-1}\,$ is equivalent to
giving a quasi-isomorphism of $A_\infty$-algebras $ f: A \to R $, which induces the inverse of $ \pi $ on
the level of homology. The existence of such a quasi-isomorphism is a well-known result in the theory of
$A_\infty$-algebras (see \cite{K}, Theorem~3.3). Since $ \pi: R \to A $ is a homomorphism of {\it unital}
algebras, we may assume that $ f $  is a (strictly) unital homomorphism of  $A_\infty$-algebras: this means
that, in addition to  \eqref{rr1}-\eqref{rrn}, we have the relations ({\it cf.} \cite{K}, Sect.~3.3)
\begin{equation}
\la{rru}
f_1(1) = 1 \quad ,\qquad f_n(a_1, a_2, \ldots, a_n) = 0 \ , \quad n\ge 2\ ,
\end{equation}
whenever one of the $ a_i$'s equals $1$.

\bprop
\la{Tqsm}
The map \eqref{cch1} defined by the twisting cochain $ \theta_\pi $ induces a quasi-isomorphism
\begin{equation}
\la{cch2}
s\tau^\natural(\theta_\pi):\, \rCC(A) \to \bar{R}_\natural \ .
\end{equation}
The corresponding isomorphisms of homology
$\, \rHC_\bullet(A) \stackrel{\sim}{\to} \H_\bullet(\bar{R}_\natural)
\,$
depend only on $ \pi $.
\eprop
\bproof
First, note that the induced map \eqref{cch2} is well defined. Indeed, in terms of components of $ \theta $
the trace map $ s\tau^\natural(\theta) $ is given by formula \eqref{cch}.  If the components of $ \theta $
satisfy \eqref{rru}, this formula shows that the subcomplex $ \CC(k) \subseteq \CC(A) $ is mapped by
$ s\tau^\natural(\theta) $ to $ k \subseteq R $ and hence vanishes in $ \bar{R}_\natural $.

Now, since $f$ is a unital morphism, there is a commutative diagram
\begin{equation}\la{ddd}
\begin{diagram}[small, tight]
\Ba(\bar{A})     & \rTo^{f} & \Ba(\bar{R})    & \rTo^{\hat{\theta}} & \bar{R} \\
 \uInto^{\natural} &          & \uInto^{\natural} &                     &\dOnto_{s\tau} \\
\Ba(\bar{A})^\natural & \rTo^{f^\natural} & \Ba(\bar{R})^\natural & \rTo^{\ s \tau^\natural(\hat{\theta})\ } & \bar{R}_\natural[1]
\end{diagram}
\end{equation}
where $ \Ba(\bar{A}) := \Ba(A)/\Ba(k) $, $ \Ba(\bar{R}) := \Ba(R)/\Ba(k) $, etc.
The composition of the two horizontal arrows on top of \eqref{ddd} is $ \theta_\pi $, so if we identify
$ \Ba(\bar{A})^\natural = \rCC(A)[1] $ as in Theorem~\ref{TQ}, the map $ s\tau^\natural(\theta_\pi) $ is equal
to the composition of the two horizontal arrows on the bottom \eqref{ddd}. Each of these arrows is a
quasi-isomorphism: $ f^\natural $ is just the restriction of $ f $, which is a quasi-isomorphism
of DG coalgebras by construction, while $ s \tau^\natural(\hat{\theta})$ is a quasi-isomorphism
by Lemma~\ref{lqsm}. It follows that \eqref{cch2} is a quasi-isomorphism. Also, the factorization
$\, s\tau^\natural(\theta_\pi) = s \tau^\natural(\hat{\theta}) \circ f^\natural \,$ implies
$$
\H_\bullet(\tau^\natural{\theta_\pi}) =
\H_\bullet(s \tau^\natural \hat{\theta}) \circ \H_\bullet(f^\natural) = \H_\bullet(s \tau^\natural \hat{\theta})
\circ \H_\bullet(\pi^\natural)^{-1}\ ,
$$
where $\,\pi^\natural:\, \Ba(\bar{R})^\natural \to \Ba(\bar{A})^\natural\,$ is a quasi-isomorphism defined by $ \pi $.
This shows that the isomorphisms induced by \eqref{cch2} on homology depend only on the algebra map $ \pi: R \to A $.
\eproof

We can now state the main result of this section.
Fix a $k$-vector space $ V $ of (finite) dimension $d$ and, for the given almost free resolution
$\, R \to A \,$, consider the DG algebra $ R_V = (\!\sqrt[V]{R})_\nn $. As explained in Remark following
Proposition~\ref{S2P1}, the elements of $ R_V $ can be written in the `matrix' form as the images of
$\, a_{ij} \in \sqrt[V]{R} \, $, see \eqref{S2E6}, under the abelianization map
$ \sqrt[V]{R} \onto R_V $.  With this notation, we have

\bthm
\la{mcor}
The trace maps \eqref{char2} are induced by the morphism
of complexes $ T:\,\CC(A) \to R_V $, whose $n$-th graded component $\,
T_n:\,A^{\otimes (n+1)}/\im(1-t_n) \to (R_V)_n \,$ is given by
\begin{equation}
\la{trf}
T_{n}(a_1, a_2, \ldots, a_{n+1}) = \sum_{i = 1}^{d}\, \sum_{k \in \Z_{n+1}}
(-1)^{nk}  f_{n+1}(a_{1+k}, a_{2+k}, \ldots, a_{n+1+k})_{ii}\ ,
\end{equation}
where $ (f_1,\, f_2,\, \ldots) $ are defined by the relations \eqref{rr1}--\eqref{rrn} and \eqref{rru}.
\ethm
\bproof
This is a direct consequence of Proposition~\ref{Tqsm} and calculations of Section~\ref{ccnm}, see \eqref{cch}.
\eproof
For $n=0$, it is easy to see that \eqref{trf} induces
$$
\Tr_V(A)_0:\,A \to \H_0(A, V) = A_V\ ,\quad a \mapsto \sum_{i = 1}^{d} \, a_{ii}\ ,
$$
which is the usual trace map on $ \Rep_V(A) $.

We can also write an explicit formula for the first trace  $\,\Tr_V(A)_1: \HC_1(A) \to \H_1(A,V) $. For this, we fix a
section $\,f_1: A \to R_0 \,$ satisfying \eqref{rr1}, and let $ \omega:\,A \otimes A \to R_0 $ denote its `curvature':
$$
\omega(a,b) := f_1(a b) - f_1(a) f_1(b)\ ,\quad a,b \in A \ .
$$
Notice that, by \eqref{rr1}, $\,\im\,\omega \subseteq \Ker\,\pi\,$. On the other hand,
$\,\Ker\,\pi = dR_1 \cong R_1/dR_2\,$, since $ R $ is acyclic in positive degrees. Thus,
identifying $\,\Ker\,\pi = R_1/dR_2\,$ via the differential on $R$, we get a map
$\, \tilde{\omega}:\,A \otimes A \to R_1/dR_2 \,$ such that
$\,d\tilde{\omega} = \omega \,$. Using this  map, we
define\footnote{The notation `$ \ch_2 $' for \eqref{chern} is justified by
the fact that this map coincides with the second Chern character in Quillen's Chern-Weil theory
of algebra cochains (see \cite{Q3}). It would be interesting to see whether the higher traces
$ \Tr_V(A)_n $ can be expressed in terms of higher Quillen-Chern characters.}
\begin{equation}
\la{chern}
\ch_2:\, \CC_1(A) \to R_1/dR_2\ ,\quad
(a,b) \mapsto [\tilde{\omega}(a,b) - \tilde{\omega}(b,a)]\ \mbox{mod}\, dR_2\ .
\end{equation}
Since $\,\tilde{\omega} \equiv f_2\,(\mbox{mod}\, dR_2)\,$, {\it cf.} \eqref{rr2},
it follows from \eqref{trf} that $\,\Tr_V(A)_1 \,$ is induced by the map
\begin{equation}
\la{chern1}
 \Tr_V(A)_1:\, (a,b) \mapsto \sum_{i=1}^d\, \ch_2(a,b)_{ii} \ .
\end{equation}

\begin{example}
\la{Ex4.1}
For illustration, consider the polynomial algebra $ A = k[x,y] $.
Since $A$ is commutative, $ \HC_0(A) = A $ and $ \HC_1(A) $ is a quotient of
$ A \otimes A $. More precisely, by \cite{L}, Prop.~2.1.14, we have the isomorphism
$\, \rHC_1(A)  \cong \Omega_{\rm com}^1(A)/dA\,$, $\, [(a, b)] \leftrightarrow [a\,db]\,$,
where $\, \Omega_{\rm com}^1(A) \,$ is the space of $1$-forms (K\"ahler differentials) on $A$ and
$d A \subset \Omega_{\rm com}^1(A) $ is the subspace of exact forms.

On the other side, as a resolution of $A$, we take the 2-Calabi-Yau algebra $ R $ described
in Example~\ref{Ex2D}: then, for $ V = k^d $, the commutative DG algebra representing
$ \DRep_V(A) $ is given by $ R_V = k[x_{ij},\,y_{ij};\,t_{ij}] $ with differential defined
by the equations \eqref{cdpol}. Since $ R_0 = k \langle x, \,y \rangle $, with $ \pi:\,R_0 \onto A $
being the canonical projection,
we can define $ f_1: A \to R_0 $ by $\, x^k y^m \mapsto x^k y^m $. Then
$\, \omega(y,x) - \omega(x,y) = [x,y] = dt \,$ in $ R_0 $. So $\,\ch_2(y,x) = t\,(\mbox{mod}\, dR_2)\,$ and,
by \eqref{chern1}, $\, \Tr_V(A)_1: \Omega_{\rm com}^1(A)/dA \to \H_1(A,V) \,$ takes the class of $\,y\, dx \in \Omega_{\rm com}^1(A) \,$
to the class of  $\, \Tr(T) =  \sum_{i=1}^d t_{ii} \,$ and the class of $\, x\, dy \,$ to the class
of $ - \Tr(T) $. In general, using the matrix notation of Example~\ref{Ex2D}, it is easy to compute
$$
\Tr_V(A)_1:\ x^k y^m dx \mapsto \sum_{i=0}^{m-1}\,\Tr(Y^{m-1-i} X^k Y^i T)\ , \quad
x^k y^m dy \mapsto - \,\sum_{j=0}^{k-1}\,\Tr(X^{k-1-j} Y^m X^j T)\ .
$$
Since $ \Omega_{\rm com}^1(A) $ is spanned by  $ x^k y^m dx $ and $ x^k y^m dy $, the above formulas
determine $ \Tr_V(A)_1 $ uniquely.

Note that the higher traces $ \Tr_V(A)_n $ are zero in this example, since,
by construction, they factor through reduced cyclic homology, while $\, \rHC_n(A) = 0 \,$ for all
$ n \ge 2 $ (see \cite{L}, Theorem~3.2.5).
\end{example}

\subsection{Relation to Lie algebra homology}
\la{S4.5}
As mentioned in the Introduction, there is a close analogy between representation homology and
homology of matrix Lie algebras. We recall that a well-known theorem of Tsygan \cite{T} and Loday-Quillen
\cite{LQ} says that, for all $ n \ge 0 $, there are natural maps
\begin{equation}
\la{liehom}
\H_{n+1}(\gl_V(A); k) \to \HC_{n}(A) \ ,
\end{equation}
which, in the limit $\,\gl_V(A) \to \gl_\infty(A)\,$, induce an isomorphism of graded Hopf algebras
\begin{equation}
\la{liehom5}
\H_\bullet(\gl_{\infty}(A); k) \stackrel{\sim}{\to} \bL(\HC(A)[1])\ .
\end{equation}
Here, $A$ is a fixed unital $k$-algebra, $ V $ a finite-dimensional
$k$-vector space, and $ \H_\bullet(\gl_V(A); k) $ is the homology of the Lie algebra
$ \gl_V(A) := \mbox{Lie}(\End\,V \otimes A) $ with trivial coefficients.
Our aim now is to make the relation between representation homology of $A$ and the Lie algebra
homology of $ \gl_V(A) $ precise.

Let $ \DGL^+_k $ be the category of non-negatively graded (chain) DG Lie algebras over $k$.
This category has a natural model structure which is compatible with the model structure
on $ \DGA_k^+ $ via the forgetful functor $ \mbox{Lie}:\,\DGA^+_k \to \DGL^+_k $
(see \cite{Q2}, Part II, Sect. 5). Let $ \, L \sonto \gl_V(A) $ be a cofibrant resolution
of $\gl_V(A)$ in $\DGL^+_k$. The forgetful functor has left adjoint $\, U: \DGL^+_k \to \DGA^+_k $
which assigns to a DG Lie algebra $L$ its universal enveloping algebra $ U(L) $.
Hence, the Lie algebra map $ L \to \gl_V(A) $ yields a map of associative DG algebras\footnote{Note that, even though
$ L \stackrel{\sim}{\to} \gl_V(A) $ is a quasi-isomorphism, \eqref{ul} is {\it not}: in fact, since taking homology commutes
with $U$, $\, \H[U(L)] = U(\H(L)) = U(\gl_V(A)) \ne \gl_V(A)$.}
\begin{equation}
\la{ul}
U(L) \to \End\,V \otimes A\ .
\end{equation}
Next, we choose an almost free resolution $ R \sonto A $ of $A$ in $\DGA^+_k$. Tensoring
with $ \End\,V $ gives a surjective quasi-isomorphism of algebras
\begin{equation}
\la{ul1}
\End\,V \otimes R \sonto \End\,V \otimes A \ .
\end{equation}
Hence, we can lift through \eqref{ul1} the composition of
\eqref{ul} with the canonical projection $ T(L) \onto U(L) $,
where $T(L)$ is the tensor algebra of $L$. This yields a map of DG algebras:
$ T(L) \to \End\,V \otimes R $, which, in turn, induces a map of complexes
\begin{equation}
\la{ul2}
L/[L,L] \into T(L)_\n \to (\End\, V \otimes R)_\n \to R_\n \to R_V\ ,
\end{equation}
where the last two maps are induced by the natural traces.

Now, for any cofibrant resolution $ L \sonto \gl_V(A) $ in $ \DGL_k^+ $,
we have isomorphisms (see \cite{FT1}, p. 101)
$$
\H_n(L/[L,L]) \cong \H_{n+1}(\gl_V(A); k) \ ,\quad n \ge 0\ .
$$
Hence, on the level of homology the map of complexes $ L/[L,L] \to R_V $ induces
\begin{equation}
\la{liem}
\H_{n+1}(\gl_V(A); k) \to \H_n(A,V) \  ,\quad  \forall\,n \geq 0\ .
\end{equation}
For $n=0$, the map \eqref{liem} is simply the composition of the obvious traces
\begin{equation}
\la{liem0}
\H_{1}(\gl_V(A); k) = \gl_V(A)/[\gl_V(A),\, \gl_V(A)] = (\End\,V \otimes A)_\n \stackrel{\sim}{\to} A_\n \to A_V\ ,
\end{equation}
while, in general, we have the following
\bthm
\la{LQTT}
For each $ n \ge 0 $, the map \eqref{liem} factors through the Loday-Quillen-Tsygan map \eqref{liehom}. The induced maps are
precisely the trace maps \eqref{char2}.
\ethm
\bproof
Let $ (\Lambda^\bullet\,\gl_V(A), d) $ be the standard Chevalley-Eilenberg complex computing the
homology of $ \gl_V(A) $ with trivial coefficients. The Tsygan-Loday-Quillen map \eqref{liehom}
is induced by (see \cite{L}, 10.2.3)
\begin{equation}
\la{liem1}
\Lambda^{\bullet + 1}\,\gl_V(A) \xrightarrow{\vartheta_\bullet} \CC_\bullet(\End\,V \otimes A)
\xrightarrow{\tr_\bullet} \CC_\bullet(A)\ ,
\end{equation}
where $\,\vartheta_n:\,\Lambda^{n + 1}\,\gl_V(A) \to \CC_n(\End\,V \otimes A) \,$ is given by
$$
\vartheta_n(\xi_0 \wedge \xi_1 \wedge \ldots \wedge \xi_n) = \sum_{\sigma \in S_n} \,
\mbox{\rm sgn}(\sigma)\,(\xi_0,\,\xi_{\sigma(1)}, \ldots , \xi_{\sigma(n)})\ ,
$$
and $ \tr_\bullet $ is induced by the generalized trace maps
$\,\tr_n:\, (\End\,V \otimes A)^{\otimes (n+1)} \to A^{\otimes (n+1)}
\,$.

Now, for $ n = 0 $, we see that \eqref{liem0} is induced by \eqref{liem1}, so Theorem~\ref{LQTT} obviously holds
in this case. In general, for $ n \ge 1 $, the result follows from the diagram
\begin{equation}
\la{D48}
\begin{diagram}[tight, small]
L/[L,L]                         &  \rTo^{}  & \bar{R}_{\n}  & \rTo^{\Tr_V}    & \bar{R}_V  \\
\uTo^{\simeq}                   &           & \uTo^{\simeq}       & \ruTo_{T_\bullet}&              \\
\Lambda^{\bullet + 1}\,\gl_V(A) &  \rTo^{}  & \rCC(A) &                 &
\end{diagram}
\end{equation}
which commutes in the derived category $ \D(k) $. The top row in \eqref{D48}
is induced by \eqref{ul2}, the bottom is the map \eqref{liem1} composed
with the canonical projection $ \CC(A) \onto \rCC(A) $, the second vertical map is
the quasi-isomorphism given by \eqref{cch2}, and the first is also a quasi-isomorphism
which is constructed similarly to \eqref{cch2}. On the level of homology, the diagram
\eqref{D48} yields the required factorization.
\eproof

\remark\ For any Lie algebra $ \g $, there are canonical maps
$\, \H_n(\g,\,\g) \to \H_{n + 1}(\g,\,k) \,$, relating the homology of $ \g $
with coefficients in the adjoint representation to that with trivial coefficients.
At the level of standard complexes, this map is induced by $ \xi_0 \otimes \xi_1 \wedge \ldots \wedge \xi_n \mapsto
(-1)^n \xi_0 \wedge \xi_1 \wedge \ldots \wedge \xi_n $. In our case ($ \g = \gl_V(A) $),
composing with \eqref{liem} yields
\begin{equation}
\la{liemad}
\H_{n}(\gl_V(A),\,\gl_V(A)) \to \H_n(A,V) \ ,\quad \forall\, n \ge 0 \ .
\end{equation}
\subsection{Trace maps on Hochschild homology}
\la{S4.7}
We close this section by constructing a `noncommutative' analogue of the trace maps \eqref{char} defined
on the relative Hochschild homology ({\it cf.} Sect.~\ref{S3.4}). As in the cyclic (commutative) case,
we fix a finite-dimensional representation $ \varrho: S \to \END\,V $ of a DG algebra $ S \in \DGA_k $, and
for a given $S$-algebra $ A \in \DGA_S $, consider $\,\Pi_V:\,A \to \END\,V \otimes \rtv{S \bs A} \,$,
the universal {\it associative} representation of $ S\bs A$ corresponding to the identity map
in Proposition~\ref{S2P1}$(a)$. Combining this representation with the canonical trace
$ \Tr_V: \END\,V \to k $,  we define
\begin{equation}
\la{ntr}
\NTr_V:\, A \xrightarrow{\Pi_V} \END\,V \otimes \rtv{S \bs A} \xrightarrow{\Tr_V \otimes \id } \rtv{S \bs A}\ .
\end{equation}
Note that, even though the algebra $ \rtv{S \bs A} $ is noncommutative, the map \eqref{ntr} obviously factors through
$ A/[S,S] $. Slightly less obvious (but easy to check) is that \eqref{ntr} actually factors through the
`bigger' quotient $ A_{\n, S} := A/[A,S] $, and its induced map fits in the commutative diagram
\begin{equation}
\la{D1SS4}
\begin{diagram}[tight, small]
  S_\n                  &  \rTo^{}  & A_{\n,S}  \\
\dTo^{\Tr_V\circ \varrho} &           & \dTo_{\NTr_V \circ \Pi_V} \\
  k                     &  \rTo^{}  & \rtv{S \bs A}
\end{diagram}
\end{equation}
This yields a morphism of complexes
\begin{equation}
\la{E2SS4}
\Hoch(S\bs A) \to \overline{\rtv{S \bs A}}\ ,
\end{equation}
and hence the trace maps on relative Hochschild homology
\begin{equation}
\la{nchar}
\NTr_V(S\bs A)_n:\, \HH_{n-1}(S \bs A) \to \H_n(\L\rtv{S\bs A})\ ,\quad n \ge 1\ .
\end{equation}
Assembled together the linear maps \eqref{nchar} define a homomorphism of graded algebras
$$
T_k(\HH(S \bs A)[1]) \to \H_\bullet(\L\rtv{S\bs A})\ ,
$$
which is a noncommutative analogue of the homomorphism \eqref{lchar1}.

\vspace{2ex}

\noindent
{\bf Question.}\ Is there a relation (natural map) between the
noncommutative representation homology $ \H_\bullet(\L\rtv{S\bs A}) $ and the
noncommutative Lie algebra (or Leibniz) homology $ {\rm HL}_\bullet(\gl_V(A), k) \,$?
It is known that $\, {\rm HL}_\bullet (\gl_V(A), k) \,$ is related
to $\, \HH_{\bullet-1}(A) \,$  (see \cite{L}, Sect.~10.6), so one might expect that there
is an analogue of Theorem~\ref{LQTT} for Hochschild homology. However, the construction of
Section \ref{S4.5} does not seem to extend to this case, since the relative
Hochschild homology $ \HH_n(k \bs A) $ vanishes for all $ n \ge 0 \,$, {\it cf.} \eqref{hoch3}.

\section{The Derived Van Den Bergh Functor}
\la{S5}
In this section, we assume that $S = k$ and $ V $ is concentrated in degree $0$. For an algebra $ A \in \Alg_k $, we
let $\,\pi_V: A \to \End\,V \otimes A_V \,$ denote the universal representation of $A$ in $V$. Restricting
scalars via $\pi_V$, we can regard $\,\End\,V \otimes A_V \,$ as a
bimodule over $ A $, or equivalently, as a left module over the enveloping algebra $\,\eA := A \otimes A^{\rm opp} \,$.
Since $ A_V $ is commutative, the image of $ A_V $ under the natural inclusion $\,A_V \into \End\,V \otimes A_V \,$
lies in the center of this bimodule. Hence, we can regard $\,\End\,V \otimes A_V \,$ as $\,\eA$-$A_V$-bimodule.
Following Van den Bergh (see \cite{vdB}, Sect.~3.3), we now define the  functor
\begin{equation}
\la{E1}
(\mbox{---})_V:\ \Bimod(A) \to \Mod(A_V)\ , \quad M \mapsto M \otimes_{\eA} (\End\,V \otimes A_V)\ .
\end{equation}
As mentioned in the Introduction, this functor plays a key role in noncommutative geometry of smooth algebras, transforming noncommutative objects on $A$ to classical geometric objects on $ \Rep_V(A) $. Our aim is to construct the higher derived
functors of \eqref{E1}, which should replace \eqref{E1} when $A$ is not smooth.

\subsection{Representations of DG bimodules}
\la{S5.1}
We begin by extending the Van den Bergh functor to the DG setting.
Fix $\, R \in \DGA_k \,$ and let $\,\pi_V:\,R \to \End\,V \otimes \rtv{R} \,$ denote the universal DG algebra homomorphism,
see Proposition~\ref{S2P1}$(a)$. The complex
$\,\rtv{R} \otimes V \,$ is naturally a left DG module over $ \End\,V \otimes \rtv{R}  $ and right DG module
over $ \rtv{R} \,$, so restricting the left action via $ \pi $ we can regard  $\,\rtv{R} \otimes V \,$ as a DG
bimodule over $R$ and $\rtv{R}$.
Similarly, we can make $\,V^* \otimes \rtv{R}$ a $\,\rtv{R}$-$R$-bimodule.
Using these bimodules, we define the functor
\begin{equation}
\la{rtvm}
\rtv{-}:\ \DGBimod(R) \to \DGBimod(\rtv{R}) \ , \quad
M \mapsto (V^* \otimes \sqrt[V]{R}) \otimes_R M \otimes_R (\sqrt[V]{R} \otimes V) \ .
\end{equation}
Now, recall that $\,R_V := (\rtv{R})_\nn\,$ is a commutative DGA. Using the natural projection
$\,\rtv{R} \to R_V \,$, we regard $R_V$ as a DG bimodule over $ \rtv{R} $ and define
\begin{equation}
\la{abb}
(\mbox{---})_\nn:\ \DGBimod(\rtv{R}) \to \DGMod(R_V) \ , \quad
M \mapsto M_\nn := M \otimes_{(\rtv{R})^{\rm e}} R_V  \ .
\end{equation}
Combining \eqref{rtvm} and \eqref{abb}, we get the functor
\begin{equation}
\la{E11}
(\mbox{---})_V:\ \DGBimod(R) \to \DGMod(R_V)\ , \quad M \mapsto
M_V := (\rtv{M})_\nn \ .
\end{equation}
As suggested by its notation, \eqref{E11} is a DG extension of \eqref{E1}. In fact, if $ R = A $ is
a DG algebra with a single nonzero component in degree $0$, the category $ \Bimod(A) $ can be viewed as
a full subcategory of $\DGBimod(R)$ consisting of bimodules concentrated in degree $0$.
It is easy to check then that the restriction of \eqref{E11} to this subcategory coincides with \eqref{E1}.

The next lemma is analogous to Proposition~\ref{S2P1} for DG algebras; it holds, however, in greater generality:
for morphism complexes $\, \HOM $ of DG modules. We recall that, if $ R $ is a DG algebra and
$M$,$\,N$ are DG modules over $R$, $\, \HOM_R(M,\,N) \,$ is a complex of vector spaces with
$n$-th graded component consisting of all $R$-linear maps $\,f:\, M \to N\,$ of degree $ n $ and
the $n$-th differential given by $\,d(f) = d_N \circ f - (-1)^n f \circ d_M\,$.

\blemma
\la{L2}
For any $\, M \in \DGBimod(R)$, $\,N \in \DGBimod(\rtv{R}) \,$ and
$\, L \in \DGMod(R_V) \,$, there are canonical isomorphisms of complexes

\vspace{0.8ex}

$(a)$ $\,\HOM_{(\rtv{R})^{\rm e}}(\rtv{M},\,N)
\cong \HOM_{\eR}(M,\,\End\,V \otimes N) \,$,

\vspace{0.8ex}

$(b)$ $\, \HOM_{R_V}(M_V,\,L) \cong \HOM_{\eR}(M,\,\End\,V \otimes L) \,$.

\elemma

\begin{proof}
Regarding $M$ as a right DG module over $ R^{\rm e}$, we have the isomorphism
\begin{eqnarray}
\la{p2e1} (V^* \otimes \sqrt[V]{R}) \otimes_R M \otimes_R (\sqrt[V]{R} \otimes V)
&\cong& M \otimes_{R^{\rm e}} (\sqrt[V]{R} \otimes \End\,V \otimes \sqrt[V]{R})\,,\\
\nonumber    \varphi \otimes x \otimes m \otimes y \otimes v &\mapsto& \pm \,M \otimes x \otimes v\varphi \otimes y\,,
\end{eqnarray}
where the left $R^{\rm e}$-module structure on $\sqrt[V]{R} \otimes \End\,V \otimes \sqrt[V]{R}$ comes from the two canonical embeddings of $ \End\,V \otimes \sqrt[V]{R} \,$ and its right $(\rtv{R})^{\rm e}$-module structure is given by
$$
(x \otimes f \otimes y) \cdot (a \otimes b^{\circ})=\pm xa \otimes f \otimes by \ ,
$$
where $a,b,x,y \in \sqrt[V]{R}$ and $f \in \End\,V$. Here, `$\pm $'  denotes signs obtained by the Koszul sign rule.
Hence
\begin{eqnarray*}
  \HOM_{(\rtv{R})^{\rm e}}(\rtv{M},\,N) \,\,& \cong &  \HOM_{{\rtv{R}}^{\rm e}}( M \otimes_{R^{\rm e}} (\!\sqrt[V]{R} \otimes \End\,V \otimes \sqrt[V]{R})\,,N)\\
    &\cong&  \HOM_{{R}^{\rm e}}(M\,, \HOM_{{\rtv{R}}^{\rm e}}(\sqrt[V]{R} \otimes \End\,V \otimes \sqrt[V]{R}\,,N))\\
    &\cong& \HOM_{R^{\rm e}}(M\,,\End\,V \otimes N)\ .
\end{eqnarray*}
This proves $(a)$. Part $(b)$ is an immediate consequence of $(a)$ and the standard $\otimes$-Hom adjunction.
\end{proof}

\begin{example}
\la{ExO}
For a DG algebra $R$, let $ \Omega^{1} R $ denote the
kernel of the multiplication map $\,R \otimes R \to R\,$.
This is naturally a DG bimodule over $ R $, which, as in the case
of ordinary algebras, represents the complex of $k$-linear graded
derivations $\,\DER (R,\,M) \,$ (see, e.g., \cite{Q1}, Sect.~3).
Thus, for any $\, M \in \DGBimod(R) \,$, there is a canonical isomorphism of
complexes
\begin{equation}
\la{der}
\DER (R,\,M) \cong \HOM_{\eR}(\Omega^{1} R,\,M)\ .
\end{equation}
Lemma~\ref{L2} then implies canonical isomorphisms
\begin{equation}
\la{omvdb}
\rtv{\Omega^1 R} \cong \Omega^1(\rtv{R}) \quad , \quad
(\Omega^1 R)_V \cong  \Omega_{\rm com}^1(R_V)\ .
 \end{equation}
Indeed, for any $ N \in \DGBimod(\rtv{R}) $, we have
\begin{eqnarray*}
\HOM_{(\rtv{R})^{\rm e}}(\rtv{\Omega^1 R},\, N) &\cong& \HOM_{\eR}(\Omega^1 R,\, \End\,V \otimes N) \quad
[\,\mbox{see Lemma~\ref{L2}$(a)$}\,] \\
&\cong& \DER(R,\,\End\,V \otimes N)\\
&\cong& \DER_{\End(V)}(\End\,V \ast R,\ \End\,V \otimes N)\\
&\cong& \DER_{\End(V)}(\End\,V \otimes \rtv{R},\ \End\,V \otimes N)\quad [\,\mbox{see}\, \eqref{S2E8}\,]\\
&\cong& \DER(\rtv{R},\, N) \\
&\cong&\HOM_{(\rtv{R})^{\rm e}}(\Omega^1(\rtv{R}),\, N)\ ,
\end{eqnarray*}
where $\,\DER_{\End(V)}\,$ denotes the space of derivations vanishing on
$\, \End\,V \,$. Thus, the first isomorphism in  \eqref{omvdb} follows
from Yoneda's Lemma. The second isomorphism is proven in a similar fashion.

As a consequence of \eqref{der} and \eqref{omvdb}, for any DG bimodule $M$,
we have a natural map:
\begin{equation}
\la{trns}
\DER(R,M) \cong \HOM_{\eR}(\Omega^{1} R,\,M) \xrightarrow{(\mbox{--})_V}
\HOM_{R_V}(\Omega_{\rm com}^{1}(R_V),\,M_V) \cong \DER(R_V,M_V)\ .
\end{equation}
\end{example}

Next, we recall that if $ R $ is a DG algebra, every
DG module $M$ over $ R $ has a {\it semifree resolution}
$\,F \to M \,$, which is similar to a free (or projective)
resolution for ordinary modules over ordinary algebras
(see \cite{FHT}, Sect.~2). To construct the derived
functors of \eqref{E1} we now follow the standard procedure
in differential homological algebra ({\it cf.} \cite{HMS}, \cite{FHT}).

Given an algebra $ A \in \Alg_k $ and a complex $M$ of bimodules over $A$, we first
choose an almost free resolution $\,f: R \to A\,$ in $\, \DGA_k \,$ and consider $ M $
as a DG bimodule over $ R $ via $f$. Then, we choose a semifree resolution
$\,F(R,M) \to M \,$  in the category $ \DGBimod(R) $ and apply to $F(R,M)$
the functor \eqref{E11}. The result is described by the following theorem,
which is the main result of this section.
\begin{theorem}
\la{T2}
Let $ A \in \Alg_k $, and let $ M $ be a complex of bimodules over $A$.

$(a)$\ The assignment $\,M \mapsto F(R,M)_V\,$ induces a well-defined functor
between the derived categories
$$
\L(\mbox{---})_V:\ \D(\Bimod\,A) \to \D(\DGMod\,R_V)\ ,
$$
which is independent of the choice of the resolutions $ R \to A $ and $ F \to M $ up to auto-equivalence of
$\, \D(\DGMod\,R_V) $ inducing the identity on homology.

$(b)$\ Taking homology $\,M \mapsto \H_\bullet[\L(M)_V]\,$ yields a functor
$$
\H_\bullet(\,\mbox{--}\,,\, V):\ \D(\Bimod\,A) \to \GrMod(\H_\bullet(A,V))\ ,
$$
which depends only on the algebra $A$ and the vector space $ V $. We call $ \H_\bullet(M,V) $
the {\rm representation homology} of $M$ with coefficients in $V$.

$(c)$\ If $ M \in \Bimod(A) $ is viewed as a 0-complex in $\D(\Bimod\,A)$, then
$\,\H_0(M, V) \cong M_V \,$.
\end{theorem}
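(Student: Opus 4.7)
The plan is to mimic the classical construction of derived functors via semifree resolutions, now with a two-stage resolution process. First, choose an almost free resolution $R \sonto A$ in $\DGA_k^+$; then view $M$ as a DG bimodule over $R$ by restriction, and choose a semifree resolution $F = F(R,M) \sonto M$ in $\DGBimod(R)$. The crucial technical input is Lemma~\ref{L2}$(b)$, which gives the adjunction $(\mbox{---})_V \dashv \End\,V \otimes (\mbox{---})$ between $\DGBimod(R)$ and $\DGMod(R_V)$. Since $V$ is finite-dimensional over the field $k$, the right adjoint preserves both surjections and quasi-isomorphisms; equipping the two categories with their standard projective model structures (in which semifree modules are cofibrant), this is a Quillen pair, so $(\mbox{---})_V$ descends to a left derived functor between homotopy categories.

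For part $(a)$, I would argue in two steps. With $R$ fixed, Ken Brown's lemma applied to the above Quillen pair shows that any quasi-isomorphism $F \stackrel{\sim}{\to} F'$ between semifree DG $R$-bimodules induces a quasi-isomorphism $F_V \stackrel{\sim}{\to} F'_V$; hence $M \mapsto F(R,M)_V$ descends to a well-defined functor $\L(\mbox{---})_V^{R} : \D(\DGBimod\,R) \to \D(\DGMod\,R_V)$. To change the algebra resolution, I would pick a quasi-isomorphism $\varphi : R' \stackrel{\sim}{\to} R$ between two almost free resolutions of $A$ (which exists by the lifting property of cofibrations in $\DGA_k^+$). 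Pullback along $\varphi$ sends a semifree resolution of $M$ over $R$ to one over $R'$, and a direct unfolding of the definition \eqref{rtvm} together with \ref{S2.2.2} exhibits a natural quasi-isomorphism comparing $F_V$ and $(\varphi^*F)_V$ after base change along $R_V \stackrel{\sim}{\to} R'_V$. Combining this with the standard equivalence $\D(\DGMod\,R_V) \simeq \D(\DGMod\,R'_V)$ induced by the quasi-isomorphism $R_V \stackrel{\sim}{\to} R'_V$ (which comes from Theorem~\ref{S2T2}) gives the required independence up to an auto-equivalence that is the identity on homology.

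Part $(b)$ is then nearly formal: taking homology sends quasi-isomorphisms to isomorphisms, so $\H_\bullet(\mbox{---},V)$ factors through the derived category. Moreover, since $F(R,M)_V$ is by construction a DG module over $R_V$, its homology is automatically a graded module over $\H_\bullet(R_V) = \H_\bullet(A,V)$, and independence from the choices follows from $(a)$.

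For part $(c)$, let $M \in \Bimod(A)$ be concentrated in degree zero and $F \sonto M$ a semifree resolution in $\DGBimod(R)$. Since $(\mbox{---})_V$ is a left adjoint by Lemma~\ref{L2}$(b)$, it is right exact, and therefore commutes with taking $\H_0$ of a resolution. Using $\H_0(F) = M$ and $\H_0(R_V) = A_V$ (Theorem~\ref{S2T4}), an unwinding of definitions via the isomorphism \eqref{p2e1} gives $\H_0(F_V) \cong M \otimes_{A^{\mathrm e}} (\End\,V \otimes A_V) = M_V$, the original Van den Bergh module \eqref{E1}. The main obstacle throughout is the independence-of-$R$ statement in $(a)$, since the target category $\D(\DGMod\,R_V)$ literally varies with $R$; handling this requires carefully invoking the standard equivalence of derived module categories along quasi-isomorphisms of DG algebras and tracking that $F_V$ and $F'_V$ correspond to each other under it.
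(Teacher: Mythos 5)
Your overall architecture (fix $R$, use the adjunction of Lemma~\ref{L2}$(b)$ to handle changes of the semifree resolution $F$, then compare different algebra resolutions via the derived Morita equivalence along a quasi-isomorphism $R'\to R$) is essentially the paper's, and your treatment of part $(b)$ and the fixed-$R$ half of part $(a)$ is sound. But there is a genuine error in the change-of-$R$ step: you assert that \emph{pullback} (restriction of scalars) along $\varphi\colon R'\stackrel{\sim}{\to}R$ sends a semifree resolution of $M$ over $R$ to one over $R'$. This is false: a semifree $R$-bimodule is built from free $\eR$-modules, and $\eR$ restricted along $\varphi\otimes\varphi$ is not free over $(R')^{\rm e}$. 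Since the whole construction $F\mapsto F_V$ is only homotopy-meaningful on semifree objects, the comparison as you set it up does not get off the ground. The fix — and what the paper actually does — is to go in the opposite direction with \emph{extension} of scalars, $F\mapsto F\otimes_{(R')^{\rm e}}\eR$, which does preserve semifreeness, and then to invoke Keller's theorem that restriction along a quasi-isomorphism of DG algebras is an equivalence of derived categories (so induction, being its left adjoint, is too). One then checks the square relating induction on bimodules, induction on $R'_V$- versus $R_V$-modules, and the two functors $(\mbox{--})_V$, which commutes by associativity of tensor products. Without this reversal your "direct unfolding of \eqref{rtvm}" has no natural map to unfold.

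Two smaller remarks. First, your Quillen-pair/Brown's-lemma packaging of the fixed-$R$ step is a legitimate alternative to the paper's direct use of the resolution-comparison results of \cite{FHT} (Prop.~2.1(ii) and 2.3); the paper itself sketches exactly this model-categorical route in its remark following the theorem. Second, for part $(c)$ your right-exactness argument can be made to work (for non-negatively graded $R$ and $F$, $\H_0$ of the relevant tensor products is the tensor product of the $\H_0$'s), but it needs that grading hypothesis to be stated; the paper instead runs a Yoneda argument through the chain of adjunctions $\Hom_{A_V}(\H_0(M,V),L)\cong\cdots\cong\Hom_{\eA}(M,\End V\otimes L)$, which avoids any exactness bookkeeping.
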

\begin{proof}
The proof of part $(a)$ is standard differential homological
algebra. Given an almost free resolution $R$ of $A$, and two semifree resolutions $F_1 \,\to\, M$ and $F_2 \,\to\, M$ in
$\D(\DGBimod\,R)$, one has a quasi-isomorphism
$F_1 \,\to\, F_2$ such that the composite map $F_1 \,\to\, F_2 \,\to\, M$ is homotopic to the resolution $F_1 \,\to\, M$ by an $R$-linear homotopy (see Proposition 2.1(ii) of \cite{FHT}). By Proposition 2.3(i) of \cite{FHT}, $(F_1)_V$ is quasiisomorphic to $(F_2)_V$. Given a morphism $ M \,\to\, N$ of complexes of $A$-bimodules, and resolutions $F_1 \,\to\, M$ and $F_2 \,\to\, N$ in $\D(\DGBimod\,R)$, one uses Proposition 2.1(ii) of \cite{FHT} to obtain a map $F_1 \,\to\,F_2$ (unique up to $R$-linear homotopy) such that the composite $F_1 \,\to\, F_2 \,\to\, N$ is homotopic to the composite $F_1 \,\to M \,\to N$ by an $R$-linear homotopy. Applying $(\mbox{---})_V$ to $F_1 \,\to\, F_2$ yields a well defined map $(F_1)_V \,\to\, (F_2)_V$ in $\D(\DGMod\,R_V)$ (which is a quasi-isomorphism if $M \,\to\, N$ is a quasi-isomorphism by Proposition 2.3(ii) of \cite{FHT} and which is clearly compatible with compositions of morphisms of complexes of $A$-bimodules). More generally, our arguments show that the assignment $M \mapsto\, F(R,M))_V$ yields a functor from $\D(\DGBimod\,R)$ to $\D(\DGMod\,R_V)$. We however, focus on the composition of this functor with the (fully faithful) embedding $\D(\DGBimod \,A) \,\to\, \D(\DGBimod\,R)$.

It remains to check that $ \L(\mbox{---})_V $ is independent of
the choice of almost free resolution of $A$. Let $R_1$ and $R_2$ be two almost free resolutions of $A$. By Proposition 3.2 of \cite{FHT}, there is a morphism $f: R_1 \,\to\, R_2$ in $\DGA_k$ such that $\H_0(f)=\text{id}_A$. Note that if $M$ is a DG $R_1$-bimodule, $M \otimes_{{R_1}^{\rm e}} R_2^{\rm e}$ is a DG $R_2$-bimodule, which is semifree if $M$ is semifree. Here, $R_2^{\rm e}$ gets a left $R_1^{\rm e}$-module structure via $f \otimes f$. It follows that $\, \F: \, M \mapsto F(R_1,M) \otimes_{{R_1}^{\rm e}}R_2^{\rm e}$ yields a well defined functor
$\,\D(\DGBimod\,R_1) \,\to\, \D(\DGBimod\,R_2)\,$. Similarly, $\, \F_V:\,N \mapsto F((R_1)_V,N) \otimes_{(R_1)_V} (R_2)_V\,$ is
a well defined functor $\D(\DGMod\,(R_1)_V) \,\to\, \D(\DGMod\,(R_2)_V)$. The following diagram commutes:
$$
\begin{diagram}[small]
\D(\DGBimod\,R_1)   &  \rTo^{\L(\,\mbox{--}\,)_V } & \D(\DGMod\,(R_1)_V)  \\
\dTo^{\F}           &                           & \dTo_{\F_V} \\
\D(\DGBimod\, R_2)   &  \rTo^{\L(\,\mbox{--}\,)_V}  & \D(\DGMod\,(R_2)_V)
\end{diagram}
$$
We claim that the vertical arrows in the above diagram are equivalences of categories. Indeed, by the standard $\otimes-\Hom$ adjunction and the fact that semifree $R_1$-modules are cofibrant in the sense of Section 8.3 of \cite{K1}, the functor induced by $M \mapsto F(R_1,M) \otimes_{{R_1}^{\rm e}}R_2^{\rm e}$ is the left adjoint of the restriction functor $\D(\DGBimod\,R_2) \,\to\, \D(\DGBimod\,R_1) $. That the latter functor is an equivalence of categories follows from Proposition 8.4 of \cite{K1}. Hence, $M \mapsto F(R_1,M) \otimes_{{R_1}^{\rm e}}R_2^{\rm e}$ induces an equivalence of categories $\D(\DGBimod\,R_1) \,\to\, \D(\DGBimod\,R_2)$. Similarly, one shows that the right vertical arrow in the above diagram is an equivalence of categories. To complete the proof of $(a)$, we note that the left vertical arrow commutes with the natural embeddings $\D(\DGBimod\,A) \,\to\, \D(\DGBimod\,R_1)$ and $\D(\DGBimod\,A) \,\to\, \D(\DGBimod\,R_2) $, respectively. Part $(b)$ is immediate from $(a)$.

To prove part $(c)$ we use Lemma 2$(b)$. For any $A_V$-module $L$  concentrated in degree $0$ (and
thought of as a DG $R_V$-module via $ R \onto A $), we have
\begin{eqnarray*}
\Hom_{A_V}(\H_0(M, V),\,L) & \cong & \Hom_{R_V}(F(R,M)_V,\,L)\\
 &\cong & \H_0(\HOM_{R_V}(F(R,M)_V,\,L))\\
 &\cong & \H_0(\HOM_{R^{\rm e}}(F(R,M),\ \End\,V \otimes L))\\
&\cong& \Hom_{R^{\rm e}}(F(R,M),\ \End\,V \otimes L)\\
&\cong& \Hom_{A^{\rm e}}(M, \ \End\,V \otimes L) \ .
\end{eqnarray*}
The result now follows from Yoneda's Lemma.
\end{proof}

\subsubsection{A model-categorical approach} There is an alternative way to
derive the Van den Bergh functor using model categories. This approach is
parallel to the one described in Section~\ref{S2.2}, so we only briefly
sketch it here.

Let $ \DGBA_k $ denote the category of pairs $ (A,M) $ with
$ A \in \DGA_k $ and  $ M \in \DGBimod(A) $; the morphisms in $ \DGBA_k $ are
given by $\, (f,\varphi): (A,M) \to (B,N) \,$, where $ f: A \to B $ is a morphism
of DG algebras and $\,\varphi: M \to N \,$ is a morphism of DG $A$-bimodules
(with $N$ viewed as a bimodule over $A$ via $f$). Similarly, we
define the category $ \cDGMA_k $, whose objects are pairs of commutative DG algebras and
DG modules over such algebras. Both these categories are fibred:
$ \DGBA_k $ is fibred over $ \DGA_k $ and $ \cDGMA_k $ is fibred over
$ \cDGA_k $, and they have natural model structures compatible
with the standard model structures on $\DGA_k$ and $ \cDGA_k $
(see \cite{R}). Now, the functors \eqref{S2E9} and \eqref{E11}
can be combined together to define
\begin{equation}
\la{fibr}
(\,\mbox{--}\,)_V :\ \DGBA_k \to \cDGMA_k\ ,\quad (A,M) \mapsto (A_V, M_V)\ .
\end{equation}
By Proposition~\ref{S2P1} and Lemma~\ref{L2}, this functor has the right adjoint
$\,\End\,V \otimes\, \mbox{--} \,$ preserving fibrations and acyclic fibrations.
Thus, we have a Quillen pair $\,(\,\mbox{--}\,)_V :\,\DGBA_k \rightleftarrows \cDGMA_k\,:
\End\,V \otimes\, {\mbox{--}}\,$.
By Quillen's Adjunction Theorem, \eqref{fibr} then has a total left derived functor
$\, \L(\,\mbox{--}\,)_V:\,\Ho(\DGBA_k) \to \Ho(\cDGMA_k) \,$, which is adjoint to
$\,\End\,V \otimes \mbox{--} \,$. One can show that this derived functor agrees with
the `semi-abelian' derived functor constructed in Theorem~\ref{T2}.

\subsection{Derived tangent spaces}
\la{S5.2}
We will use the above construction to compute the derived tangent spaces $
\T_\varrho \DRep_{V}(A)_\bullet $ at $\,\varrho \in \Rep_{V}(A)\,$.
It is instructive to compare our computation with the ones in \cite[Section~3.3]{vdB}.
Recall that if $X$ is an ordinary affine $k$-scheme and
$ x \in X $ is a closed $k$-point, the tangent space
$ \T_x X $ at $ x $ is defined by
$\,\T_x X := \Der(k[X],\,k_x)\,$.  Similarly ({\it cf.} \cite{CK}, (2.5.6)), if $ \X $ is a DG scheme, and $ x $ is a closed $k$-point of the underlying scheme $ \X^0 $,
the tangent DG space to $ \X $ at $x$ is defined by
$$
\T_x\,\X := \DER(k[\X],\,k_x)\ .
$$
Now, if $ X_\bullet := \boldsymbol{\Spec}\,\H_\bullet(k[\X]) $ is
the underlying derived scheme of $ \X $, then, by definition, the {\it derived tangent space}
$ (\T_x X)_\bullet $ to $ X_\bullet $ is given by
\begin{equation}
\la{tspace}
(\T_x X)_\bullet := \H_\bullet[\DER(k[\X],\,k_x)]\ .
\end{equation}

Let $\,\varrho:\,A \to \End\,V\,$ be a fixed representation of $A$. Choose a cofibrant resolution $ R \sonto A $,
and let $\,\varrho_V:\,R_V \to k \,$ be the DG algebra homomorphism corresponding to the representation
$\,\varrho:\,R \to A \to \End\,V$. Then, for $ M = \End\,V $, the canonical map \eqref{trns}
is an isomorphism of complexes: indeed,
\begin{eqnarray*}
\DER(R_V, \, k) &\cong& \HOM_{R_V}(\Omega_{\rm com}^1(R_V),\,k) \\
                 &\cong& \HOM_{R_V}((\Omega^1 R)_V,\,k)\qquad [\,\mbox{see}\,\eqref{omvdb}\,] \\
                 &\cong& \HOM_{\eR} (\Omega^1 R ,\,\End\,V) \qquad [\,\mbox{see\,Lemma~\ref{L2}$(b)$}\,]\\
                 &\cong& \DER(R,\,\End\,V)\ .
\end{eqnarray*}
This implies that $\, \T_\varrho\DRep_{V}(A)_\bullet := \H_\bullet[\DER(R_V, \, k)] \cong \H_\bullet[\DER(R,\,\End\,V)]\,$.
The following proposition is now a direct consequence of \cite{BP}, Lemma~4.2.1 and Lemma~4.3.2.
\bprop
\la{P2}
There are canonical isomorphisms
$$
\T_\varrho\DRep_{V}(A)_n \cong \left\{
\begin{array}{lll}
\Der(A,\,\End\,V)\ & \mbox{\rm if} &\ n = 0\\*[1ex]
\HH^{n+1}(A,\,\End\,V)\ & \mbox{\rm if} &\ n \ge 1
\end{array}
\right.
$$
where $ \HH^{\bullet}(A,\,\End\,V) $ denotes the Hochschild cohomology of the representation $ \varrho: A \to \End\,V $.
\eprop

\begin{remark}
As explained in Section~\ref{S2.3.5}, in case when $V$ is a single vector space and $ R \in \DGA_k $ is almost free,
one can show that $ \Rep_V(R) $ is isomorphic the DG scheme $ \RAct(R,\,V) $ constructed in \cite{CK}.
This implies that $\,\T_\varrho\DRep_{V}(A)_\bullet\,$ should be isomorphic to $\,\T_\varrho \RAct(R,\,V)_\bullet\,$,
which is indeed the case, as one can easily see by comparing our Proposition~\ref{P2} to \cite{CK},
Proposition~3.5.4$(b)$.
\end{remark}

\subsection{Traces on bimodules}
\la{S5.3}
If $M$ is a bimodule over a DG algebra $A$, a {\it trace} on $M$
is a map of complexes $ \tau: M \to N $ vanishing on the commutator subspace
$ [A,M] \subseteq M$.
Every trace on $M$ factors through the canonical projection $ M \onto M_\n := M/[A,M] $,
which is thus the universal trace.

Now, given a finite-dimensional vector space $V$, let $ \pi_V(M) $ denote the canonical map
corresponding to $ \id_{M_V} $ under the isomorphism of Lemma~\ref{L2}. The map of complexes
\begin{equation}
\la{S5E1}
\Tr_V(M)\,:\ M \xrightarrow{\pi_V(M)} \End\,V \otimes M_V \xrightarrow{\Tr_V \otimes \id} M_V\ ,
\end{equation}
is then obviously a trace, which is functorial in $M$.
Thus \eqref{S5E1} defines a morphism of functors
\begin{equation}
\la{S5E2}
\Tr_V:\ (\,\mbox{--}\,)_\n \to (\,\mbox{--}\,)_V\ .
\end{equation}
As in the case of DG algebras, \eqref{S5E2} induces a morphism of functors
$\, \D(\DGBimod\,A) \to \D(k)\,$:
\begin{equation}
\la{S5E3}
\Tr_V:\ \L(\,\mbox{--}\,)_\n \to \L(\,\mbox{--}\,)_V\ ,
\end{equation}
where $ \L(\,\mbox{--}\,)_V $ is the derived representation functor introduced in Theorem~\ref{T2}.
To describe \eqref{S5E3} on $ M \in \DGBimod(A) $ explicitly we choose an
almost free resolution $ p:\,R \sonto A $, regard $M$ as a bimodule over $R$ via $p$
and choose a semifree resolution of $ F(R,M) \sonto M $ in $ \DGBimod(R)$. Then
\eqref{S5E3} is induced by the map \eqref{S5E1} with
$ M $ replaced by $ F(R, M) $:
\begin{equation}
\la{S5E4}
\Tr_V(M):\ F(R, M)_\n \to F(R,M)_V\ .
\end{equation}
Note that, if $ A \in \Alg_k $ and $ M \in \Bimod(A) $, then
$\,\H_n[F(R, M)_\n] \cong \HH_n(A,M) \,$ for all $ n \ge 0 $,
so \eqref{S5E4} induces the trace maps on Hochschild homology:
\begin{equation}
\la{S5E5}
\Tr_V(M)_n:\ \HH_n(A,M) \to \H_n(M, V)\ ,\quad \forall\,n \ge 0\ ,
\end{equation}
where $ \H_n(M, V) := \H_n[\L(M)_V] $ is the representation homology of $M$,
see Theorem~\ref{T2}$(b)$.

\blemma
\la{S5L3}
Let $M$ be a DG bimodule over a DG algebra $R$. Then,
for any $ \partial \in \DER(R,M)$, the following diagram commutes:
\begin{equation}\la{S5D0}
\begin{diagram}[small, tight]
R &  \rTo^{\partial} & M \\
\dTo^{\Tr_V(R)}           &                           & \dTo_{\Tr_V(M)} \\
R_V&  \rTo^{\partial_V}  & M_V
\end{diagram}
\end{equation}
where $ \partial_V $ is the image of $ \partial $ under \eqref{trns}.
\elemma
\bproof
Let $ R \ltimes M  \in \DGA_k $ denote the semi-direct product of $R$ and $M$.
It is easy to check that $ R \ltimes M $ fits in the commutative diagram
\begin{equation}\la{S5D1}
\begin{diagram}[small]
R \ltimes M &       & \rTo^{(\Tr_V(R),\,\Tr_V(M))}  &        & R_V \ltimes M_V\\
  &\rdTo_{\Tr_V(R \ltimes M)}  &           &  \ruOnto_{p} &  \\
  &       &     (R \ltimes M)_V     &        &
\end{diagram}
\end{equation}
where $p$ is the canonical projection. Now, any derivation
$ \partial: R \to M $ gives an algebra map $ 1 + \partial:\,R \to R \ltimes M $.
Since $ \Tr_V(R) $ is functorial in $R$, we have a commutative diagram
\begin{equation}\la{S5D2}
\begin{diagram}[small, tight]
R              &  \rTo^{1+\partial}  & R \ltimes M \\
\dTo^{\Tr_V(R)}&                     & \dTo_{\Tr_V(R \ltimes M)} \\
R_V            &  \rTo^{(1+\partial)_V}& (R\ltimes M)_V
\end{diagram}
\end{equation}
Combining \eqref{S5D1} and \eqref{S5D2} yields \eqref{S5D0}.
\eproof

\subsection{Periodicity and the Connes differential}
\la{S5.33}
One of the fundamental properties of cyclic homology
is Connes' periodicity exact sequence ({\it cf.} \cite{L}, 2.2.13):
\begin{equation}
\la{ISB}
\ldots \to \rHH_n(A) \xrightarrow{I} \rHC_n(A) \xrightarrow{S}
\rHC_{n-2}(A) \xrightarrow{B} \rHH_{n-1}(A) \to \ldots
\end{equation}
This sequence involves two important operations on cyclic homology:
the periodicity operator $S$ and the Connes differential $B$.
Our aim is to study the effect of these operations
on representation homology. Since \eqref{ISB} arises naturally from
a cyclic bicomplex, we begin by constructing such a bicomplex from
an arbitrary resolution.

For $ A \in \Alg_k $, fix an almost free resolution
$\,p: R \sonto A \,$ in $ \DGA_k^+ $ and consider the
periodic complex
\begin{equation}
\la{Qper}
X(R) := [\,\ldots \xleftarrow{\bpar} \bar{R} \xleftarrow{\beta} \Omega^1(R)_\n
\xleftarrow{\bpar} \bar{R} \xleftarrow{\beta}  \ldots\,]\ .
\end{equation}
Recall that the differential $ \bpar $ in \eqref{Qper} is induced by the universal derivation
$\,\partial:\,R \to \Omega^1(R) \,$ (see Example~\ref{ExO}) and $ \beta
$ is defined by $\,\beta(x\,dy) = [x, y]\,$ for $ x, y \in R $.
Since $R$ is a free algebra, the complex \eqref{Qper} is exact
(see \cite{Q3}, Example~3.10). Truncating it at $ \bar{R}$-term in degree $0$ and changing
the sign of differentials on $ \Omega^1(R)_\n $, we define the first quadrant bicomplex
\begin{equation}
\la{Qper+}
X^+(R) := [\,0 \leftarrow \bar{R} \xleftarrow{\beta} \Omega^1(R)_\n
\xleftarrow{\bpar} \bar{R} \xleftarrow{\beta}  \ldots\,]\ .
\end{equation}
The sub-bicomplex of $ X^+(R) $ obtained by replacing the first (nonzero) column of $ X^{+}(R) $ by $ \im(\beta) $
has exact rows, hence its total complex is acyclic. It follows that the quotient map
\begin{equation}
\la{Totp}
\Tot\,X^+(R) \sonto \bar{R}_\n
\end{equation}
defined by the canonical projection $ \tau: \bar{R} \onto \bar{R}_\n\,$ from the first column
is a quasi-isomorphism. By Proposition~\ref{S4P1}, we  conclude
\begin{equation}
\la{tothc}
\H_n[\Tot\,X^+(R)] \cong \H_n[\bar{R}_\n] \cong \rHC_n(A)\ ,\quad \forall\,n\ge 0 \,.
\end{equation}
Next, consider the sub-bicomplex $ X_{2}^{+}(R) $ consisting of the first two columns of $ X^{+}(R) $;
its total complex is the c\^{o}ne of $ \beta $. The algebra map
$\, p: R \to A $ induces a morphism of complexes
\begin{equation}
\la{th1}
\Tot\,X_{2}^{+}(R)  = \cn[\bar{R} \xleftarrow{\beta} \Omega^1(R)_\n] \sonto
\cn[\bar{A} \xleftarrow{p \beta} \Omega^1(R)_\n]\ ,
\end{equation}
which is a quasi-isomorphism since so is $ p $. The complex
$$
\cn(p \beta) = [\,0 \leftarrow \bar{A} \xleftarrow{p\,\beta_0}
\Omega^1_\n(R)_0 \leftarrow \Omega^1_\n(R)_1 \leftarrow \ldots \,]
$$
computes the reduced Hochschild homology of $A$ ({\it cf.} \cite{FT}); hence, \eqref{th1} induces
\begin{equation}
\la{th2}
\H_n[\Tot\,X_{2}^{+}(R) ] \cong \rHH_n(A)\ ,\quad \forall\,n\ge 0 \,.
\end{equation}
Now, the natural exact sequence of bicomplexes
$$
0 \to X_{2}^{+}(R)  \to X^+(R) \to X^+(R)[2,0] \to 0
$$
gives the short exact sequence of total complexes
$$
0 \to \Tot\, X_{2}^{+}(R)  \to \Tot\,X^+(R) \xrightarrow{S} \Tot\,X^+(R)[2] \to 0\ ,
$$
which, with identifications \eqref{tothc} and \eqref{th2}, induces
Connes' long exact sequence \eqref{ISB}.

The periodicity map $S$ in \eqref{ISB} can be described as follows.
Let $ \bar{r}_n \in \bar{R}_\n $ be a cycle representing a homology class
in $ \rHC_{n}(A) $. Since \eqref{Totp} is a quasi-isomorphism,
there is a cycle in $ \Tot\, X^+(R) \,$, say
\begin{equation}
\la{cyeq}
(r_n,\, \omega_{n-1},\, r_{n-2},\, \ldots \,) \,\in\, \Tot[X^+(R)]_n = \bar{R}_n \oplus \Omega_\n^1(R)_{n-1} \oplus \bar{R}_{n-2} \oplus \ldots \ ,
\end{equation}
that projects onto $ \bar{r}_n $ under \eqref{Totp}. At the level of total complexes, the map $S$ is defined by
$$
S(r_n,\, \omega_{n-1},\, r_{n-2},\, \ldots \,) = (r_{n-2},\, \ldots \,) \ .
$$
Now, the condition $\, d_{\Tot}(r_n,\, \omega_{n-1},\, r_{n-2},\, \ldots \,) = 0 \,$ implies the equations
\begin{equation}
\la{cyeq1}
\tau(r_n) = \bar{r}_n\, ,\quad \beta(\omega_{n-1}) = - dr_n\ ,\quad \bpar(r_{n-2}) = d \omega_{n-1}\ ,\ \ldots
\end{equation}
which can be solved successively using the exact sequence
\begin{equation}
\la{Qper1}
0 \leftarrow \bar{R}_\n \xleftarrow{\tau} \bar{R} \xleftarrow{\beta} \Omega^1(R)_\n \xleftarrow{\bpar}
\bar{R}_\n \leftarrow 0\ .
\end{equation}
Note that the exactness of \eqref{Qper1} at the right implies that $ \bar{r}_{n-2} :=
\tau(r_{n-2}) $ is a cycle in $ \bar{R}_\n $, and with identification \eqref{tothc},
the map $\, S: \rHC_{n}(A) \to \rHC_{n-2}(A) $ is thus given by
$\, S[\bar{r}_n] = [\bar{r}_{n-2}]\,$.

To construct the analogue of a cyclic bicomplex for representation homology we now apply to \eqref{Qper}
our trace maps \eqref{E1S4} and \eqref{S5E1}. By Lemma~\ref{S5L3}, we have
the commutative diagram
\begin{equation}
\la{S5D7}
\begin{diagram}[small, tight]
\ldots &  \lTo^{\bpar} & \bar{R}         & \lTo^{\beta}  & \Omega^1(R)_\n &  \lTo^{\bpar} & \bar{R} & \lTo^{\beta} & \ldots \\
       &               & \dTo^{\Tr_V} &                  & \dTo^{\Tr_V}  & & \dTo^{\Tr_V} &  & \\
\ldots &  \lTo^{\bpar_V}     & \bar{R}_V       & \lTo^{0}& \Omega_{\rm com}^1(R_V) &  \lTo^{\bpar_V} & \bar{R}_V & \lTo^{0} & \ldots
\end{diagram}
\end{equation}
where $ \bpar_V $ is induced by the de Rham differential $ \partial_V: R_V \to \Omega_{\rm com}^1(R_V)\,$. The diagram \eqref{S5D7}
suggests that
\begin{equation}
\la{Qperv}
X(R)_V := [\,\ldots \xleftarrow{\bpar_V} \bar{R}_V \xleftarrow{0} \Omega_{\rm com}^1(R_V) \xleftarrow{\bpar_V}
\bar{R}_V \xleftarrow{0}  \ldots\,]
\end{equation}
should be viewed as a periodic bicomplex for representation homology. In analogy with cyclic theory, we define then
a non-negative bicomplex $ X^+(R)_V $ by truncating \eqref{Qperv} at  $ \bar{R}_V $ term in degree $0$  ({\it cf.}
\eqref{Qper+}). The total complex of $  X^+(R)_V $ is the direct sum of complexes
\begin{equation}
\la{Totv}
\Tot\,X^+(R)_V = \bar{R}_V\,\oplus\,\cn(\bpar_V)[1]\,\oplus\,\cn(\bpar_V)[3]\,
\oplus\,\ldots \ ,
\end{equation}
its $n$-th term is given by
$$
[\Tot\,X^+(R)_V]_n = (\bar{R}_V)_n \oplus \Omega_{\rm com}^1(R_V)_{n-1} \oplus (\bar{R}_V)_{n-2} \oplus \Omega_{\rm com}^1(R_V)_{n-3} \oplus \ldots
$$

Now, by Theorem~\ref{comp}, $ R_V $ is isomorphic to a (graded) polynomial algebra. Hence the reduced de Rham differential $ \bpar_V:\,\bar{R}_V \to \Omega_{\rm com}^1(R_V) $ is injective, and the canonical map $\,\cn(\bpar_V) \to \Omega_{\rm com}^1(R_V)/\partial R_V \,$ is a quasi-isomorphism. With identification \eqref{Totv}, we thus have a quasi-isomorphism
\begin{equation}
\la{Totv1}
\Tot\,X^+(R)_V \sonto \bar{R}_V\,\oplus\,
(\Omega_{\rm com}^1(R_V)/\partial R_V)[1] \,\oplus\, (\Omega_{\rm com}^1(R_V)/\partial R_V)[3] \,\oplus\,\ldots\
\end{equation}
mapping $\,
(r_n, \omega_{n-1}, r_{n-2}, \omega_{n-3}, \ldots) \mapsto
(r_n, \bar{\omega}_{n-1}, \bar{\omega}_{n-3}, \ldots) \,$,
where $\,\bar{\omega} = \omega\,(\mbox{mod}\,\partial R_V)\,$.
It follows that
\begin{equation*}
\la{Totv1e}
\H_n[\,\Tot\,X^+(R)_V] \cong \bar{\H}_n(A,V)\,\oplus\,
\H_{n-1}[\Omega_{\rm com}^1(R_V)/\partial R_V] \,\oplus\, \H_{n-3}[\Omega_{\rm com}^1(R_V)/\partial R_V]\,\oplus\,\ldots
\end{equation*}
We denote this last homology by $ \tH_n(A,V) $ and refer to it as the {\it cyclic representation homology} of $A$.
Note that $ \bar{\H}_n(A,V) $ appears as a direct summand of $ \tH_n(A,V) $, and the trace maps \eqref{char1} can be naturally extended to $ \tH_n(A,V) \,$. Precisely,
\begin{equation}
\la{trex}
\tTr_V(A)_n:\, \rHC_n(A) \to \tH_n(A,V)\ ,\quad [\bar{r}_n] \mapsto
(\Tr_V(r_n),\, \Tr_V(\bar{\omega}_{n-1}),\,\Tr_V(\bar{\omega}_{n-3}), \,\ldots\,)\ ,
\end{equation}
where, for $n$-cycle $ \bar{r}_n \in \bar{R}_\n $, the elements $ (r_n,\,\omega_{n-1}, r_{n-2},
\omega_{n-3}, \ldots) $ are defined as in \eqref{cyeq} by equations \eqref{cyeq1}.

%The relation between $ \H_n(A,V) $ and $ \tH_n(A,V) $
%somewhat resembles the relation between cyclic homology and the de Rham cohomology of a commutative algebra $A$
%(see \cite{L}, 3.4.12).

Now, let $ X_{2}^+(R)_V $ be the sub-bicomplex of $ X^+(R)_V $ consisting of the first two columns. Its total
complex is  $\, \Tot\,X_{2}^+(R)_V = \bar{R}_V \oplus \Omega_{\rm com}^1(R_V)[1]\,$. Notice that, since $R$ is a free
algebra, $\,\Omega^1(R) \sonto \Omega^1(A)\,$ is a semifree resolution of $ \Omega^1(A)$ in the category of
$R$-bimodules. Hence, by  Theorem~\ref{T2},
$$
\H_\bullet[\Omega_{\rm com}^1(R_V)] \cong \H_\bullet[\Omega^1(R)_V] \cong \H_\bullet(\Omega^1 A, V)
$$
is the representation homology of the $A$-bimodule $ \Omega^1 A $, which is independent
of the resolution $R$. Thus
\begin{equation}
\la{Tot2}
\H_n[\,\Tot\,X_{2}^+(R)_V] \cong \bar{\H}_n(A,V)\,\oplus\, \H_{n-1}(\Omega^1 A, V) \ , \quad \forall\,n \ge 0 \ .
\end{equation}
The relation between $ \H_\bullet(A,V) $ and $ \H_\bullet(\Omega^1 A,V) $ is clarified by the following
\blemma
\la{LLL}
For any $ A \in \Alg_k $, there is a spectral sequence\footnote{To simplify the notation, for
$ M \in \Bimod(A) $, we denote by $ \HH_{p+1}(A,M) $ the usual Hochschild homology of $M$
if $ p \ge 1 $ and $\, (\Omega^1 A) \otimes_{\eA} M \,$ if $ p = 0 $.}
\begin{equation}
\la{sps1}
E^2_{p,q} = \HH_{p+1}(A,\,\End\,V \otimes \H_q(A,V))\ \Rightarrow \ \H_n(\Omega^1 A, V)\ .
\end{equation}
In particular, if $A$ is a smooth algebra, then $\,\H_n(\Omega^1 A, V) \cong
\HH_{n+1}(A,\,\End\,V \otimes A_V) = 0 \,$ for $\, n > 0 $.
\elemma
\bproof
By \eqref{omvdb}, $\, \Omega_{\rm com}^1(R_V) \cong \Omega^1(R)_V = \Omega^1(R) \otimes_{\eR} (\End\,V \otimes R_V)\,$.
Hence there is a standard Eilenberg-Moore spectral sequence of the form (see \cite{McC}, Theorem~7.6)
\begin{equation}
\la{elsp}
E^2_{p, \ast} = \Tor_{p}^{\H_\bullet(\eR)}(\H_\bullet(\Omega^1 R),\,\H_\bullet(\End\,V \otimes R_V))\
\Rightarrow \ \H_n[\Omega_{\rm com}^1(R_V)]\ .
\end{equation}
As mentioned above, the algebra map $\,p:\, R \sonto A \,$ induces quasi-isomorphisms
$\,\eR \sonto \eA \,$ and $\,\Omega^1(R) \sonto \Omega^1(A)\,$, and since
$\,\Tor^{\eA}_p(\Omega^1 A,\,\mbox{--}\,) \cong \HH_{p+1}(A,\,\mbox{--}\,)\,$, the spectral
sequence \eqref{elsp} can identified with \eqref{sps1}. The last claim follows from the fact
that $ \H_q(A,V)  = 0 $ for $ q > 0 $ if $A$ is a smooth algebra (see
\cite[Theorem~21]{BFR}).
\eproof

The short exact sequence of complexes
$$
0 \to \Tot\, X_{2}^{+}(R)_V  \to \Tot\,X^+(R)_V
\xrightarrow{S_V} \Tot\,X^+(R)_V [2] \to 0\ ,
$$
induces the long exact sequence
\begin{equation}
\la{chom}
\ldots \to \bar{\H}_n(A,V) \oplus \H_{n-1}(\Omega^1 A, V) \xrightarrow{I_V} \tH_n(A,V) \xrightarrow{S_V} \tH_{n-2}(A,V)
\xrightarrow{B_V} \bar{\H}_{n-1}(A,V) \oplus \H_{n-2}(\Omega^1 A, V) \to \ldots
\end{equation}
which is the analogue of the Connes periodicity sequence \eqref{ISB}. The exact sequence 
\eqref{chom} justifies our terminology and notation for $ \tH_{\bullet}(A,V) $.

The periodicity map $ S_V $ in \eqref{chom} can be described as follows. Let
$$
\bar{\alpha} = (r_n,\,\bar{\omega}_{n-1},\,\bar{\omega}_{n-3},\,\ldots\,)
\,\in\, \bar{R}_V \,\oplus\, [\Omega_{\rm com}^1(R_V)/\partial R_V]_{n-1}
\,\oplus\, [\Omega_{\rm com}^1(R_V)/\partial R_V]_{n-3} \,\oplus\, \ldots
$$
be an $n$-cycle representing a homology class in $ \tH_n(A,V) $. Then there is
a cycle $\, \alpha = (r_n, \,\omega_{n-1}, \,r_{n-2}, \,\omega_{n-3},\, \ldots) $ in
$ (\Tot\,X^+(R)_V)_n $ that maps onto $ \bar{\alpha} $ under \eqref{Totv1}, and
at the level of total complexes, the map $ S_V $ is  defined by
$$
S_V(r_n, \,\omega_{n-1}, \,r_{n-2}, \,\omega_{n-3},\, \ldots) = (r_{n-2}, \,\omega_{n-3},\, \ldots)\ .
$$
Now, to construct $ \alpha $ from $ \bar{\alpha} $ we choose representatives
$\, \omega_{n-1}, \,\omega_{n-3},\, \ldots \in \Omega_{\rm com}^1(R_V) $ for the corresponding
components of $ \bar{\alpha} $ and define $\, r_{n-2},\, r_{n-4},\,\ldots \in \bar{R}_V $
by solving the equations
\begin{equation}
\la{cyeq3}
\bpar_V(r_{n-2}) = d \omega_{n-1}\ ,\quad \bpar_V(r_{n-4}) = d \omega_{n-3}\ ,\quad \ldots
\end{equation}
Note that the above equations are solvable since $\, d \bar{\omega}_{n-2k-1} = 0 \,$ in
$\,\Omega_{\rm com}^1(R_V)/\partial R_V \,$ for all $\, k =1,\,2,\,\ldots $ Also note that
the elements $\, r_{n-2},\, r_{n-4},\, \ldots $ satisfying \eqref{cyeq3}
are necessarily cycles in $ \bar{R}_V $ since $ \Ker(\bpar_V) = 0 $.
The periodicity map $ S_V $ is thus given by
\begin{equation}
\la{cyeq4}
S_V :\, \tH_n(A,V) \to \tH_{n-2}(A,V) \ ,\quad
(r_n,\,\bar{\omega}_{n-1},\,\bar{\omega}_{n-3},\,\ldots\,) \mapsto (r_{n-2},\,\bar{\omega}_{n-3},\,\ldots\,)\ .
\end{equation}
Finally, observe that the map $\, I_V:\, \bar{\H}_n(A,V) \oplus \H_{n-1}(\Omega^1 A, V) \to \tH_n(A,V) \,$
takes the class of $ (r_n,\,\omega_{n-1}) \in
\bar{R}_V \oplus \Omega_{\rm com}^1(R_V) \,$ to the class of $\, (r_n, \, \bar{\omega}_{n-1}) $,
in particular, it induces the identity map on $ \bar{\H}_n(A,V) $. By exactness of \eqref{chom}, the map
$ B_V $ then takes its values in $ \H_\bullet(\Omega^1 A, V) $.

We summarize our calculations in the following theorem.
\bthm
\la{Tconnes}
For all $ n \ge 0 $, there are commutative diagrams
\begin{equation*}
\la{S5D3}
\begin{diagram}[small, tight]
\rHC_n(A)   &  \rTo^{S} & \rHC_{n-2}(A) \\
\dTo^{\tTr_V}&                 & \dTo_{\tTr_V} \\
\tH_n(A,\,V) &  \rTo^{S_V}     & \tH_{n-2}(A,V)
\end{diagram}
\qquad \qquad \quad
\begin{diagram}[small, tight]
\rHC_n(A)   &  \rTo^{B} & \overline{\HH}_{n+1}(A) \\
\dTo^{\Tr_V}&                 & \dTo_{\Tr_V} \\
\bar{\H}_n(A,\,V) &  \rTo^{B_V}     & \H_{n}(\Omega^1 A, V)
\end{diagram}
\end{equation*}
where the map $S_V$ is given by \eqref{cyeq4}, and $ B_V $ is induced by the derivation
$ \partial_V: R_V \to \Omega_{\rm com}^1(R_V) $.
\ethm

In the spirit of the Kontsevich-Rosenberg principle, the last
diagram in Theorem~\ref{Tconnes} can be interpreted by saying that
the NC derived cotangent complex of $ A $ is represented by
Hochschild homology, so that the Connes differential
$ B:\,\HC_\bullet(A) \to \HH_{\bullet+1}(A) $ induces
(through trace maps) the de Rham differential
$ \partial_V: R_V \to \Omega_{\rm com}^1(R_V) $ at the level of homology.

\subsection{Derived polyvector fields}
\la{S5.4}
Recall that if $\phi:A \to  B $ is a morphism in $\DGA_k$, a $\phi$-derivation from $A$ to $B$ of
degree $r$ is a map $s:A \rar B$ of degree $r$ such that
$$
\phi+\varepsilon.s:A \rar B[\varepsilon]\ ,\quad
a \mapsto \phi(a)+\varepsilon .s(a)\ ,
$$
is a morphism in $\dga_k$, where $\varepsilon$ has degree $-r$, $\,\varepsilon^2 = 0\,$ and $d\varepsilon=0$.

There is a natural generalization of this definition. Fix a $k$-tuple $\,(d_1,\ldots , d_k)\,$ of integers, and
let $ \{\varepsilon_1, \ldots, \varepsilon_k \} $ be variables such that $\, |\varepsilon_i| = - d_i \,$,
$ \varepsilon_i^2 = 0 $ and $\,d \varepsilon_i = 0 \, $ for all $ i = 1, \ldots, k $.
For any order subset $ S=\{i_1 < \ldots < i_p\}$ of $\{1, \ldots ,k\}$, put $\, \varepsilon_S :=
\varepsilon_{i_1} \ldots \varepsilon_{i_p}\,$. Then, a
{\it polyderivation of (multi)degree} $(d_1,..,d_k)$ with respect to a DG algebra map $\phi:\, A \rar B$ is
defined to be a collection of maps $\phi_S:A \rar B$, one for each $S \subset \{1,..,k\}$, such that
$\, \phi_{\varnothing} = \phi\,$ and
$$
\sum_{S \subset \{1,\ldots,k\}} \varepsilon_S \cdot \phi_S:\, A \rar B[\varepsilon_1,..,\varepsilon_k]\quad
\mbox{is a morphism in}\, \DGA_k\ .
$$

\noindent Equivalently, the polyderivations can be characterized by the following Leibniz rule
\begin{equation}
\la{leib}
\phi_k(a \cdot b)=\sum_{S \subset \{1,\ldots ,k\}} \pm \,\phi_S(a) \cdot \phi_{\{1,\ldots,k\}\setminus S}(b) \ ,
\end{equation}
where the signs are determined by the Koszul sign convention. More precisely,
for $\,S=\{i_1 < \ldots < i_p\}\,$, the sign involved in \eqref{leib} is given by
$$
(-1)^{(\sum_{j \in \{1,\ldots ,k\} \setminus S} d_j)(\sum_{j \in S} d_j+|a|)} \cdot \chi_S \ ,
$$
where $\,\chi_S\,$ is the sign of the $(p,k-p)$-shuffle mapping $\{1,\ldots ,p\}$ to $S$ and acting by block
permutations on blocks of size $\{d_1,\ldots ,d_k\}$.

\begin{lemma}
\la{pT.1.0}
A polyderivation $\{\phi_S\}$ of multidegree $(d_1,..,d_k)$ with respect
to $ \phi:A \rar B$ induces a polyderivation $\{(\phi_V)_S\}$ of
multidegree $(d_1,..,d_k)$ with respect to $\,\phi_V:A_V \to B_V\,$, such that
$$
\Tr_V \circ \phi_S=(\phi_V)_S \circ \Tr_V \quad, \quad \forall \,S \subset \{1, \ldots, k\}\ .
$$
\end{lemma}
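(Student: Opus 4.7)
The plan is to encode the polyderivation as a single DG algebra map in a larger algebra and then apply the representation functor, which by functoriality will produce the desired polyderivation on $A_V, B_V$. The trace compatibility will then be automatic from the naturality of $\Tr_V$.

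\textbf{Step 1.} Package $\{\phi_S\}$ as a single DG algebra morphism
$$
\Phi \,:=\, \sum_{S \subset \{1,\ldots,k\}} \varepsilon_S \cdot \phi_S \ :\ A \to B[\varepsilon_1,\ldots,\varepsilon_k]\ .
$$
Here $B[\varepsilon] := B \otimes_k k[\varepsilon_1,\ldots,\varepsilon_k]$ is the DG algebra in which the $\varepsilon_i$ are central, of degree $-d_i$, with $\varepsilon_i^2 = 0 = d\varepsilon_i$. By the defining Leibniz rule of a polyderivation, $\Phi$ is a morphism in $\DGA_k$.

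\textbf{Step 2.} Identify $(B[\varepsilon])_V \cong B_V[\varepsilon]$ canonically. The universal representation $\pi_V : B \to \End\,V \otimes B_V$ extends to a DG algebra map $B[\varepsilon] \to \End\,V \otimes B_V[\varepsilon]$ by sending $\varepsilon_i \mapsto 1 \otimes \varepsilon_i$; by Proposition~\ref{S2P1}$(b)$ this produces a map $(B[\varepsilon])_V \to B_V[\varepsilon]$ in $\cDGA_k$. In the reverse direction, applying $\pi_V$ to $B[\varepsilon]$ itself and restricting to $B$ yields $B_V \to (B[\varepsilon])_V$, while the $\varepsilon_i$, being central in $B[\varepsilon]$, must land in the supercenter of $\End\,V \otimes (B[\varepsilon])_V$, namely $1 \otimes (B[\varepsilon])_V$, providing compatible lifts of the $\varepsilon_i$ and extending to a map $B_V[\varepsilon] \to (B[\varepsilon])_V$. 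A direct check on generators shows these two morphisms are mutually inverse.

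\textbf{Step 3.} Apply $(\,\mbox{--}\,)_V$ to $\Phi$ to obtain a DG algebra map $\Phi_V : A_V \to (B[\varepsilon])_V \cong B_V[\varepsilon]$. Since $B_V[\varepsilon] = \bigoplus_S B_V \cdot \varepsilon_S$ as a graded module, the map $\Phi_V$ admits a unique decomposition
$$
\Phi_V \,=\, \sum_{S \subset \{1,\ldots,k\}} \varepsilon_S \cdot (\phi_V)_S\ ,
$$
with $(\phi_V)_S : A_V \to B_V$ of degree $-\sum_{i \in S} d_i$ and $(\phi_V)_\varnothing = \phi_V$. The fact that $\Phi_V$ is a DG algebra morphism is precisely the Leibniz rule \eqref{leib} for the family $\{(\phi_V)_S\}$; hence $\{(\phi_V)_S\}$ is a polyderivation of multidegree $(d_1,\ldots,d_k)$ with respect to $\phi_V$.

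\textbf{Step 4.} Trace compatibility follows from the naturality of the trace $\Tr_V$ applied to $\Phi$. This gives a commutative diagram of complexes
\[
\begin{diagram}[small, tight]
A   &  \rTo^{\Phi}   & B[\varepsilon] \\
\dTo^{\Tr_V} &     & \dTo_{\Tr_V} \\
A_V &  \rTo^{\Phi_V} & B_V[\varepsilon]
\end{diagram}
\]
The right-hand trace, computed via the decomposition in Step~2, acts as $\Tr_V \otimes \id_{k[\varepsilon]}$ since the $\varepsilon_i$ are central with trivial differential. Expanding both sides along $\varepsilon_S$ in the equality $\Tr_V \circ \Phi = \Phi_V \circ \Tr_V$ and comparing coefficients of each $\varepsilon_S$ yields the required identity $\Tr_V \circ \phi_S = (\phi_V)_S \circ \Tr_V$ for every $S \subset \{1,\ldots,k\}$.

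The main technical point is Step~2, the identification $(B[\varepsilon])_V \cong B_V[\varepsilon]$; once this is in hand, everything else is routine naturality. The key input there is that the $\varepsilon_i$ are supercentral in $B[\varepsilon]$ with zero differential, which forces them to pass through the representation functor trivially (the supercenter of $\End\,V \otimes C$ is $C$).
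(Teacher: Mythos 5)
Your overall strategy is the paper's: package the polyderivation as a single DG algebra map $\Phi=\sum_S\varepsilon_S\cdot\phi_S\colon A\to B[\varepsilon]$ and push it through the representation adjunction. Steps 1, 3 and 4 are fine. The problem is Step 2: the claimed isomorphism $(B[\varepsilon])_V\cong B_V[\varepsilon]$ is false, and the argument for it — that the $\varepsilon_i$, being central in $B[\varepsilon]$, "must land in the supercenter of $\End\,V\otimes(B[\varepsilon])_V$" — is not valid. The image of a central element under an algebra map is only required to commute with the image of that map, not with the whole target; since the universal representation is not surjective, nothing forces $\pi_V(\varepsilon_i)$ into $1\otimes(B[\varepsilon])_V$. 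Concretely, take $B=k$, $V=k^2$ and a single $\varepsilon$ of degree $0$: then $\Hom_{\DGA_k}(k[\varepsilon]/(\varepsilon^2),\,M_2(C))$ is the set of square-zero $2\times 2$ matrices over $C$, so $(k[\varepsilon]/(\varepsilon^2))_V$ is a polynomial algebra on four generators $n_{ij}$ modulo the entries of $N^2=0$, which is not $k[\varepsilon]/(\varepsilon^2)=B_V[\varepsilon]$. So without repair, the composite $A_V\to(B[\varepsilon])_V\cong B_V[\varepsilon]$ in Step 3 does not exist as written.

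The repair is easy and brings you exactly to the paper's proof: you never need $(B[\varepsilon])_V$, let alone its identification with $B_V[\varepsilon]$. One direction of your Step 2 is correct and is all that is used: $\pi_V\otimes\id\colon B[\varepsilon]\to\End\,V\otimes B_V[\varepsilon]$ is a DG algebra map with \emph{commutative} target algebra $B_V[\varepsilon]$ in the second factor, so the composite $(\pi_V\otimes\id)\circ\Phi\colon A\to\End\,V\otimes B_V[\varepsilon]$ corresponds under the adjunction of Proposition~\ref{S2P1}$(b)$ directly to a morphism $A_V\to B_V[\varepsilon]$ in $\cDGA_k$. (Equivalently, this is the canonical map $(B[\varepsilon])_V\to B_V[\varepsilon]$ composed with $\Phi_V$, but the adjunction formulation avoids the issue entirely.) From there your Step 3 decomposition and the Leibniz rule go through unchanged, and the trace identity follows either from your naturality square or, as in the paper, from the matrix-coefficient identity $(\phi_S(a))_{ij}=(\phi_V)_S(a_{ij})$ built into the adjunction.
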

\begin{proof}
Let $\pi_V:\,B \rar \End(V) \otimes B_V $ denote the universal representation. Consider the morphism
$$
(\pi_V \otimes \id) \circ \left(\sum_S \varepsilon_S \cdot \phi_S \right):\
A \rar \End(V) \otimes B_V[\varepsilon_1,..,\varepsilon_k]\ .
$$
By adjunction, this gives us a morphism of commutative DG algebras
$$
\sum_S \varepsilon_S\,(\phi_V)_S: A_V \rar B_V[\varepsilon_1, \ldots ,\varepsilon_k]\ .
$$
Hence, the family of operators $\{(\phi_V)_S:A_V \rar B_V\}$ is a polyderivation of multidegree $(d_1,..,d_k)$
with respect to $\phi_V$. By construction, $(\phi_S(a))_{ij}=(\phi_V)_S(a_{ij})$. Hence
\begin{equation}
\la{proppolyder1} \text{Tr}_V \circ \phi_S=(\phi_V)_S \circ \text{Tr}_V \text{,}
\end{equation}
which is the claim of the lemma.
\end{proof}
Note that a polyderivation $\{\phi_{\varnothing}, \phi_{\{1\}}\}$ of mutipdegree $d_1$ is nothing but an DG-algebra homomorphism
$\phi_{\varnothing}:\, A \rar B$ together with a derivation $\phi_{\{1\}}$ of degree $d_1$ with respect
to $\phi_{\varnothing}$. We also remark that
if $A$ and $B$ are in $\cDGA_k$, and if $\{\phi_S:A \rar B\}_{S} $ is a polyderivation of multidegree $(d_1,\ldots,d_k)$,
then for each $S \subset \{1,\ldots,k\}$, $\phi_S$ is a differential operator of order $|S|$ with
respect to $\phi_{\varnothing}$ in the sense of \cite{TT} (see {\it op. cit,}, Definition~2.2).

\begin{example}
\la{T.1.1}
Let $\mathcal R $ be a graded algebra (i.e, an object of $\DGA_k$ with zero differential). Let
$D_1$ and $D_2$ be two graded derivations on $\mathcal A $ of degrees $d_1$ and $d_2$ respectively. Then, setting
$\phi_{\varnothing}:=\id \,:\, \mathcal R \rar \mathcal R $, $\phi_{\{i\}} := D_i\,:\,\mathcal R \rar \mathcal R$ for $i=1,2$ and
$\phi_{\{1,2\}}:=D_2\circ D_1\,:\,\mathcal R \rar \mathcal R$, one obtains a polyderivation of
multidegree $(d_1,d_2)$ from $\mathcal R$ to itself.
\end{example}

The following proposition is the main result in this section. We use the same notation as in Lemma~\ref{pT.1.0}.

\begin{prop} \la{pT.1.1}
Let $ R \,\in\, \DGA_k$. The map $\,\tau_V\,:\, \DER(R) \to \DER(R_V)\,$,$\, D \mapsto D_V \,$,
is a morphism of DG Lie algebras.
\end{prop}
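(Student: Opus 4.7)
The DG Lie structure on $\DER(R)$ and $\DER(R_V)$ is given by the graded commutator $[D_1, D_2] = D_1 D_2 - (-1)^{d_1 d_2} D_2 D_1$ and internal differential $\delta D = [d_R, D]$ (resp.\ $[d_{R_V}, -]$). The map $\tau_V: D \mapsto D_V$ is $k$-linear and degree-preserving by \eqref{trns}, so I need only verify bracket and differential compatibility. My plan is to prove bracket preservation directly on generators of $R_V$, then deduce differential compatibility from the identification $\tau_V(d_R) = d_{R_V}$.

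The key ingredient is Theorem~\ref{comp}: the commutative DG algebra $R_V$ is freely generated by the matrix entries $a_{ij}$ attached to the elements $a$ of an almost-free generating set of $R$. Since both $\tau_V([D_1, D_2])$ and $[(D_1)_V, (D_2)_V]$ are derivations on $R_V$, it suffices to check they agree on these generators. The foundational identity is
\[
D_V(a_{ij}) = (D\,a)_{ij},
\]
obtained by a direct Leibniz computation for the canonical extension $\tilde D:\End V \ast R \to \End V \ast R$ of $D$ (zero on $\End V$, equal to $D$ on $R$) applied to $(a)_{ij} = \sum_k e_{ki} a e_{jk}$ from \eqref{S2E6}; all Koszul signs vanish since $V$ is concentrated in degree $0$ and hence $|e_{ij}| = 0$. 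Iterating this identity gives $(D_1)_V(D_2)_V(a_{ij}) = (D_1 D_2\,a)_{ij}$ and symmetrically $(D_2)_V(D_1)_V(a_{ij}) = (D_2 D_1\,a)_{ij}$, so that on a generator
\[
\tau_V([D_1, D_2])(a_{ij}) = ([D_1, D_2]\,a)_{ij} = (D_1 D_2\,a)_{ij} - (-1)^{d_1 d_2}(D_2 D_1\,a)_{ij} = [(D_1)_V, (D_2)_V](a_{ij}).
\]
A more conceptual version of this argument uses Lemma~\ref{pT.1.0} applied to the polyderivation $\{\id, D_1, D_2, D_2 \circ D_1\}$ from Example~\ref{T.1.1}: the induced polyderivation on $R_V$ has top component $(\phi_V)_{\{1,2\}}$ satisfying the same Leibniz rule as $(D_2)_V(D_1)_V$, and the rigidity of polyderivations (two polyderivations sharing lower-order data differ by a derivation, which here vanishes on generators) forces $(\phi_V)_{\{1,2\}} = (D_2)_V(D_1)_V$.

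For the differential, observe that the DG structure on $R_V$ is by construction obtained by extending $d_R$ trivially on $\End V$ to a derivation on $\End V \ast R$, restricting to $\sqrt[V]{R}$, and abelianizing; this is precisely $\tau_V(d_R) = d_{R_V}$. Combined with bracket preservation, $\tau_V(\delta D) = \tau_V([d_R, D]) = [d_{R_V}, D_V] = \delta(D_V)$, completing the proof. The main obstacle is the implicit identification $(D_1 D_2)_V = (D_1)_V(D_2)_V$: the left-hand side is not literally in the image of $\tau_V$ (since $D_1 D_2$ is not a derivation), so it must first be defined via the matrix-element construction on $\sqrt[V]{R}$, and the comparison with the naive composition on $R_V$ — via the rigidity of the polyderivation data — is the crux of the argument.
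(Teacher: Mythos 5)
Your proof is correct, but it reaches the crux $(D_2\circ D_1)_V=(D_2)_V\circ (D_1)_V$ by a different route than the paper. The paper also reduces to exactly this identity (via Lemma~\ref{pT.1.0} and Example~\ref{T.1.1} applied to the underlying graded algebra $R_\#$), but then proves it with no generator computation at all: the graded algebra map $f_{21}=\id+\varepsilon_1 D_1+\varepsilon_2 D_2+\varepsilon_1\varepsilon_2(D_2\circ D_1)\colon R_\#\to R_\#[\varepsilon_1,\varepsilon_2]$ visibly factors as $(f_2\otimes\id)\circ f_1$, and applying the adjunction of Proposition~\ref{S2P1}$(b)$ to this factorization gives $(f_{21})_V=((f_2)_V\otimes\id)\circ(f_1)_V$; comparing coefficients of $\varepsilon_1\varepsilon_2$ yields the identity for arbitrary $R$, with all Koszul signs absorbed into the adjunction. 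Your argument instead evaluates both sides on the matrix-element generators of $R_V$, using $D_V(b_{ij})=(Db)_{ij}$ — which is precisely the relation $(\phi_S(a))_{ij}=(\phi_V)_S(a_{ij})$ recorded in the proof of Lemma~\ref{pT.1.0}, valid for \emph{every} $b$, so the iteration $(D_1)_V(D_2)_V(a_{ij})=(D_1D_2a)_{ij}$ is legitimate — together with the fact that two derivations of $R_V$ agreeing on algebra generators coincide. This is sound; its only blemish is that by invoking Theorem~\ref{comp} you implicitly assume $R$ almost free, whereas the proposition is stated for arbitrary $R\in\DGA_k$. The restriction is easily removed (for any $R$, the algebra $R_V$ is generated by the elements $a_{ij}$ with $a\in R$, by the Remark following Proposition~\ref{S2P1} and multiplicativity of matrix elements \eqref{S2E6}), but the paper's functorial argument sidesteps the issue entirely. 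Your treatment of the differential, via $\tau_V(d_R)=d_{R_V}$ and bracket compatibility, coincides with the paper's. In short: your route buys concreteness and an explicit formula on generators; the paper's buys presentation-independence and automatic sign bookkeeping.
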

\begin{proof}
Applying Lemma~\ref{pT.1.0} (and Example~\ref{T.1.1}) to $\mathcal R:= R_{\#}$, the DG algebra $R$
with differential forgotten, we see that it suffices to verify that
\begin{equation}\la{eT.1.1} (D_2 \circ D_1)_V \,=\, (D_2)_V \circ (D_1)_V
\end{equation}
for any $ D_1, D_2 \,\in\, \der(\mathcal R)$.
Indeed, since $[D_1,D_2]:=D_1 \circ D_2 -{(-1)}^{|D_1||D_2|} D_2 \circ D_1$ and the bracket on $\DER(R_V)$ is similarly defined, equation~\eqref{eT.1.1} implies that $\tau_V$ is a morphism of
graded Lie algebras. That $\tau_V$ is a morphism of complexes follows from the fact that the differential on
$\,\DER(R)=[d_R,-]\,$ and the fact that $(d_R)_V=d_{R_V}$.

To verify \eqref{eT.1.1} note that the (graded) algebra homomorphism
\begin{equation}
\la{eT.1.2} f_{21}\,:\,\mathcal R \rar \mathcal R[\varepsilon_1,\varepsilon_2], \,\,\,\,\,\,
a \mapsto a +\varepsilon_1.D_1(a)+\varepsilon_2.D_2(a)+\varepsilon_1\varepsilon_2(D_2 \circ D_1)(a)
\end{equation}
coincides with the composite map
\begin{equation}\la{eT.1.3}
\mathcal R \xrightarrow{f_1} {\mathcal R}[\varepsilon_1] \xrightarrow{f_2 \otimes \id}
{\mathcal R}[\varepsilon_1,\varepsilon_2]
\,,\end{equation}
where $f_1(a):=a+\varepsilon_1D_1(a)$ and $f_2(a):=a +\varepsilon_2D_2(a)$. By construction, the coefficient of
$\varepsilon_1\varepsilon_2$ in the morphism $f_{21,V}\,:\,{\mathcal R}_V \rar {\mathcal R}_V[\varepsilon_1,\varepsilon_2]
$ obtained as in the proof of Lemma~\ref{pT.1.0} is $(D_2 \circ D_1)_V$.
Since the morphisms \eqref{eT.1.2} and \eqref{eT.1.3} coincide, the standard adjunction
$$
\Hom_{\DGA_k}(\mathcal R,\,  \End\,V \otimes {\mathcal R}_V[\varepsilon_1,\varepsilon_2])
\,\cong\, \Hom_{\cDGA_k}({\mathcal R}_V,  {\mathcal R}_V[\varepsilon_1,\varepsilon_2])
$$
implies that $\,(f_{21})_V\, = \, ((f_{2})_V \otimes \id) \circ (f_1)_V\,$.
The coefficient of $\varepsilon_1\varepsilon_2$ in the right hand side of this last equation is precisely
$\,(D_2)_V \circ (D_1)_V\,$. This proves the desired proposition.
\end{proof}

Combining Proposition~\ref{pT.1.1} and Lemma~\ref{pT.1.0}, we obtain the following
\begin{cor} \la{pT.1.2}
The following diagram commutes in $ \Com(k) $
\begin{equation*}
 \begin{diagram}
  \DER(R) \otimes \bar{R}_\n & \rTo & \bar{R}_\n\\
   \dTo^{\tau_V \otimes \text{Tr}_V(R)} & & \dTo_{\text{Tr}_V(R)}\\
      \DER(R_V) \otimes \bar{R}_V & \rTo & \bar{R}_V
 \end{diagram}
\end{equation*}
\end{cor}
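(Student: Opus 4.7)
The plan is to deduce the corollary directly from the two results that immediately precede it: Lemma~\ref{S5L3} (taken in the special case $M=R$) and Proposition~\ref{pT.1.1}. The essential content is just the identity
\[
\Tr_V(R) \circ D \;=\; D_V \circ \Tr_V(R) \qquad \text{as maps } R \to R_V,
\]
together with the observation that both sides descend to the quotients $\bar{R}_\natural$ and $\bar{R}_V$.

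First, I would specialize Lemma~\ref{S5L3} to the case where the DG bimodule $M$ equals $R$ itself (viewed as a bimodule over $R$ in the usual way). A derivation $D \in \DER(R)$ can be regarded as an element of $\DER(R,M)$ in this setting, and Proposition~\ref{pT.1.1} guarantees that the induced $D_V = \tau_V(D)$ is a derivation of $R_V$. Lemma~\ref{S5L3} then yields the commutative square with horizontal arrows $D$ and $D_V$ and vertical arrows $\Tr_V(R)$. This is already the content of the corollary at the level of $R$ and $R_V$, before passing to quotients.

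Next, I would check the three compatibilities needed to descend the square to $\bar{R}_\natural$ on top and $\bar{R}_V$ at the bottom. (i) The trace map $\Tr_V(R):R\to R_V$ factors through $R_\natural$ because $R_V$ is commutative, and further through $\bar{R}_\natural = R_\natural/k$ because $\Tr_V$ sends $1_R$ to the scalar $d\cdot 1_{R_V} \in k \subset R_V$, which becomes zero in $\bar{R}_V$. (ii) The derivation $D$ preserves $[R,R]$ (by the graded Leibniz rule, commutators are sent to commutators) and annihilates $1_R$, so it induces a well-defined map $\bar{D}:\bar{R}_\natural\to\bar{R}_\natural$. (iii) By Proposition~\ref{pT.1.1}, the same reasoning applies to $D_V$ on $\bar{R}_V$.

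With these pieces in place, the commutativity of the displayed diagram follows by applying the canonical projections $R \twoheadrightarrow \bar{R}_\natural$ and $R_V \twoheadrightarrow \bar{R}_V$ to each side of the identity $\Tr_V(R) \circ D = D_V \circ \Tr_V(R)$. I do not anticipate any real obstacle here: the only technical point worth writing out carefully is the descent of $\Tr_V(R)$ through the $k$-summand, but this is immediate from the explicit formula $\Tr_V(1_R)=d$ and the definition $\bar{R}_V = R_V/k\cdot 1$. Thus the corollary reduces to a purely formal passage to quotients in the square supplied by Lemma~\ref{S5L3}.
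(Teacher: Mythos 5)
Your overall strategy --- establish the identity $\Tr_V(R)\circ D = \tau_V(D)\circ\Tr_V(R)$ as maps $R\to R_V$ and then pass to the quotients $\bar R_\natural$ and $\bar R_V$ --- is exactly the paper's, and your descent step (points (i)--(iii)) is correct and is what the paper leaves implicit. The problem is the source you cite for the key identity. Lemma~\ref{S5L3} applied to $M=R$ does \emph{not} produce the square you want: its right-hand vertical arrow is the bimodule trace $\Tr_V(M)\colon M\to M_V$ and its bottom-right corner is $M_V$, the Van den Bergh representation object of $M=R$ regarded as an $R$-bimodule. For $M=R$ this object is not $R_V$. Indeed, by Lemma~\ref{L2}$(b)$, $M_V$ represents the functor $L\mapsto \HOM_{\eR}(R,\,\End V\otimes L)$, i.e.\ the graded centralizer of $\pi_V(R)$ in $\End V\otimes L$, which is strictly larger than $L$ in general (already for $R=k\langle x\rangle$ and $\dim V=2$ it is generically of rank $2$ over $R_V$). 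Correspondingly, $\partial_V$ in Lemma~\ref{S5L3} is a derivation $R_V\to M_V$, not the element $\tau_V(D)\in\DER(R_V)$, and $\Tr_V(M)$ is not the algebra trace $\Tr_V(R)$. To salvage this route you would have to introduce the canonical map $c\colon M_V\to R_V$ adjoint to $\pi_V$ and prove both $c\circ\Tr_V(M)=\Tr_V(R)$ and $c\circ\partial_V=\tau_V(D)$; neither is checked in your write-up.

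The intended ingredient is Lemma~\ref{pT.1.0} in the case $k=1$, with $\phi_{\varnothing}=\id_R$ and $\phi_{\{1\}}=D$ (cf.\ Example~\ref{T.1.1}): its proof constructs $D_V=\tau_V(D)$ directly as a derivation of $R_V$ via the adjunction $\Hom_{\DGA_k}(R,\End V\otimes R_V[\varepsilon])\cong\Hom_{\cDGA_k}(R_V,R_V[\varepsilon])$, and the relation $(D(a))_{ij}=D_V(a_{ij})$ gives precisely $\Tr_V\circ D=D_V\circ\Tr_V$ as maps $R\to R_V$. Proposition~\ref{pT.1.1} is then what guarantees that $\tau_V$ is a map of DG Lie algebras, so that the bottom row of the diagram is a genuine Lie action and the square is one of Lie module structures. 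With Lemma~\ref{pT.1.0} substituted for Lemma~\ref{S5L3}, the rest of your argument goes through as written.
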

Now, we apply Corollary~\ref{pT.1.2} to an almost free resolution $ R \in \DGA^+_k $ of an algebra
$ A \in \Alg_k $. We assume that $ R_\# $ is finitely generated as a graded $k$-algebra
(which implies, of course, that $A$ is finitely generated as a $k$-algebra).
First, since  $\Omega^1(R) $ is a projective $R$-bimodule,
the algebra map $ R \sonto A $ induces a quasi-isomorphism
\begin{equation*}
\DER(R) \cong \HOM_{R^e}(\Omega^1 R, R) \sonto  \HOM_{R^e}(\Omega^1 R,\,A) \cong \DER(R,\,A)\ .
\end{equation*}
By \cite{BP}, Lemma~4.2.1, this yields
\begin{equation}
\la{hh+}
\H_\bullet[\DER(R)] \cong \HH^{\bullet+1}(A)\ ,
\end{equation}
where we write $\,\HH^{\bullet+1}(A) := [\oplus_{n\ge 1} \HH^{n+1}(A,A)] \oplus \Der(A) \,$ with
$ \Der(A) $ located in homological degree $0$ and $ \HH^{n+1}(A,A) $ in homological degree
$ -n $. Note that $ \H_n[\DER(R)] $ has a natural structure of graded Lie algebra
with Lie bracket induced by the commutator in $ \DER(R) $. It is easy to check
that under \eqref{hh+} this bracket corresponds to the Gerstenhaber bracket on
$ \HH^{\bullet+1}(A) $, so that \eqref{hh+} is actually an isomorphism of graded Lie algebras.
Next, we identify ({\it cf.} \cite{vdB1}, Prop.~3.3.4)
\begin{eqnarray*}
\DER(R_V) &\cong  & \HOM_{R_V}(\Omega_{\rm com}^1 R_V, R_V)\\
          &\cong  & \HOM_{R^e}(\Omega^1 R, \End\,V \otimes  R_V)\\
          & \cong & \HOM_{R^e}(\Omega^1 R, \eR) \otimes_{\eR} (\End\,V \otimes R_V) \\
          & \cong & \DDER(R) \otimes_{\eR} (\End\,V \otimes R_V) \\
          & \cong &  \DDER(R)_V\ ,
\end{eqnarray*}
where $ \DDER(R) := \DER(R, \eR) $ denotes the complex of {\it double} derivations on $ R $.
Note that the third isomorphism above requires $ \Omega^1 R $ to be finitely generated
projective bimodule, which follows from our assumption on $R$.

Now, let $ D(A) \sonto A $ be the canonical cobar-bar resolution of $A$. By the standard
lifting property, there is a quasi-isomorphism of DG algebras $ D(A) \stackrel{\sim}{\to} R $
factoring this resolution through the given resolution $ R \sonto A $.
This induces a quasi-isomorphism of complexes of right $\eA$-modules
\begin{equation}
\la{hhh+}
\DER(R, \eA) \stackrel{\sim}{\to} \DER(D(A), \eA) \cong C^{\bullet +1}(A, \eA)\ ,
\end{equation}
where $\, C^{\bullet+1}(A, \eA) \,$ is the  Hochschild cochain complex of
$ \eA $ equipped with negative (homological) grading. On the other hand,
$ \eR \sonto \eA $ induces $\,\DDER(R) \stackrel{\sim}{\to} \DER(R, \eA) \,$, which together with
\eqref{hhh+} yields
$$
\DDER(R) \stackrel{\sim}{\to} C^{\bullet +1}(A, \eA)\ .
$$
Since $R$ is almost free,
$ \Omega^1 R $ is actually semifree left $\eR$-module, and its dual $ \DDER(R) $
is then semifree right $\eR$-module. Hence the above quasi-isomorphism is a semifree
resolution of $ C^{\bullet +1}(A, \eA) $ in $ \D(\DGBimod \,R) $. It follows that
\begin{equation}\la{hhd}
\H_\bullet[\DER(R_V)] \cong \H_\bullet[\DDER(R)_V] \cong \H_\bullet(C^{\bullet + 1}(A, \eA), V) \ ,
\end{equation}
where the last group is the representation homology of the complex $ C^{\bullet + 1}(A, \eA) $
viewed as an object in $ \D(\Bimod\,A) \,$ ({\it cf.} Theorem~\ref{T2}$(b)$). The isomorphism \eqref{hhd}
allows us to equip  $ \H_\bullet(C^{\bullet + 1}(A, \eA), V) $
with a graded Lie algebra structure induced from $ \DER(R_V) $. Since $ \bar{R}_V $ is naturally
a DG Lie module over $ \DER(R_V) $, its homology $ \bar{\H}_n(A,V) = \H_n(\bar{R}_V) $ is a graded Lie module
over $\, \H_\bullet(C^{\bullet + 1}(A, \eA), V) \,$; we write $\, \mathtt{act}_V \,$
for the corresponding action map. Corollary~\ref{pT.1.2} now implies
\begin{theorem}
\la{pT.1.3}
The following diagram commutes:
\begin{equation*}
 \begin{diagram}
   \HH^{\bullet+1}(A) \otimes \rHC_{\bullet}(A)& \rTo^{\mathtt{act}\quad} &  \rHC_{\bullet}(A)\\
      \dTo^{\tau_V \otimes \Tr_V(A)_{\bullet}} & & \dTo_{\Tr_V(A)_{\bullet}}\\
\H_\bullet(C^{\bullet + 1}(A, \eA), V) \otimes \bar{\H}_{\bullet}(A,V)&
\rTo^{\mathtt{act}_V} & \bar{\H}_{\bullet}(A,V)
 \end{diagram}
\end{equation*}
where $ \,\mathtt{act} \,$ denotes the natural action
of the Gerstenhaber Lie algebra $  \HH^{\bullet+1}(A) $
on the reduced cyclic homology $\rHC_{\bullet}(A)$ \mbox{\rm (}see, e.g., \cite{Kh} \mbox{\rm)}.
\end{theorem}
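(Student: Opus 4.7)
The plan is to realize both actions in the diagram at the chain level, using a single almost free resolution $R \sonto A$ in $\DGA_k^+$ (assumed to be degreewise finitely generated, as in the construction of $\DDER(R)_V$), and then invoke Corollary~\ref{pT.1.2} to obtain the desired commutativity before passing to homology. The key observation is that the isomorphisms \eqref{hh+} and \eqref{hhd} are not merely Lie-algebra isomorphisms but identify $\DER(R)$ with a chain model of the Gerstenhaber Lie algebra $\HH^{\bullet+1}(A)$, and $\DER(R_V) \cong \DDER(R)_V$ with a chain model of $\H_\bullet(C^{\bullet+1}(A,\eA), V)$.

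\smallskip

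First, I would realize the top action at the chain level. By Proposition~\ref{S4P1}$(b)$, the complex $\bar{R}_\n$ computes $\rHC_\bullet(A)$ (up to a shift already absorbed into our notation), and any $D \in \DER(R)$ preserves $[R,R]$ and the unit, hence descends to an operator $\bar{D}_\n:\bar{R}_\n \to \bar{R}_\n$. The resulting pairing $\DER(R) \otimes \bar{R}_\n \to \bar{R}_\n$ is a chain-level map of DG Lie modules. I would verify that on homology it agrees with the standard Gerstenhaber action $\mathtt{act}$ of $\HH^{\bullet+1}(A)$ on $\rHC_\bullet(A)$; this follows from Stasheff-type compatibility (a cobar-bar comparison of $\DER(R)$ with the Hochschild cochain complex $C^{\bullet+1}(A,A)$, analogous to \eqref{hhh+} but with coefficients in $A$ rather than $\eA$), since both actions are characterized by their Cartan-type formulas against the Connes differential.

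\smallskip

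Next, I would realize the bottom action analogously. Via the identification $\DER(R_V) \cong \DDER(R)_V$ established in Section~\ref{S5.4}, each $\tilde D \in \DER(R_V)$ descends to an operator on $\bar{R}_V$, and the resulting pairing is a chain-level model for the action $\mathtt{act}_V$ on representation homology (by the construction of $\H_\bullet(C^{\bullet+1}(A,\eA),V)$ via Theorem~\ref{T2}). With these models in place, the diagram of the theorem is obtained from homology of the square
$$
\begin{diagram}[small,tight]
\DER(R) \otimes \bar{R}_\n & \rTo & \bar{R}_\n \\
\dTo^{\tau_V \otimes \Tr_V(R)} & & \dTo_{\Tr_V(R)} \\
\DER(R_V) \otimes \bar{R}_V & \rTo & \bar{R}_V
\end{diagram}
$$
which commutes by Corollary~\ref{pT.1.2}. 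Taking $\H_\bullet$ and using \eqref{hh+}, \eqref{hhd}, together with the identification of $\Tr_V(A)_\bullet$ with the map induced by $\Tr_V(R)$ (constructed in Section~\ref{S4.4}), yields the stated diagram.

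\smallskip

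The main obstacle I foresee is step one: checking that the naive action of $\DER(R)$ on $\bar{R}_\n$ by evaluation really descends, under the quasi-isomorphisms \eqref{hh+} and $\bar{R}_\n \simeq \rCC(A)[1]$, to the Gerstenhaber action on $\rHC_\bullet(A)$. One way to handle this is to use a model where $R$ is a cofibrant replacement of $A$ together with a chosen section $f_\bullet$ as in \eqref{rr1}-\eqref{rru}; then the cobar-bar model gives an explicit $L_\infty$ (in fact DG) quasi-isomorphism $\DER(R) \to C^{\bullet+1}(A,A)$ intertwining the natural actions on $\bar{R}_\n$ and on $\rCC(A)$. A cleaner alternative is to invoke the Cartan homotopy formula on both sides: the action is characterized up to homotopy by its interaction with the differential and Connes' $B$, and since $\tau_V$ intertwines these structures (Proposition~\ref{pT.1.1} and Theorem~\ref{Tconnes}), the compatibility follows.
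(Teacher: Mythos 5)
Your proposal is correct and follows essentially the same route as the paper: the proof there consists of taking homology of the commuting square of Corollary~\ref{pT.1.2} applied to an almost free (degreewise finitely generated) resolution $R\sonto A$, combined with the identifications \eqref{hh+} and \eqref{hhd} and the fact that $\mathtt{act}_V$ is by definition the action induced by the $\DER(R_V)$-module structure on $\bar{R}_V$. The one point you flag as the main obstacle — matching the chain-level $\DER(R)$-action on $\bar{R}_\n$ with the standard Gerstenhaber action on $\rHC_\bullet(A)$ — is left implicit in the paper (which only checks that \eqref{hh+} respects brackets and cites the literature for $\mathtt{act}$), so your extra care is reasonable but not a different method.
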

The graded Lie algebra $\, \H_\bullet(C^{\bullet + 1}(A, \eA), V) \cong \H_\bullet[\DER(R_V)] \,$
may be viewed as a Lie algebra of derived vector fields acting on
$ \H_\bullet(A,V) = \H_\bullet(R_V) $, the derived algebra of functions on $ \Rep_V(A)$.
By the Kontsevich-Rosenberg principle,
Theorem~\ref{pT.1.3} should then be interpreted by saying that the Gerstenhaber algebra
$ \HH^{\bullet +1}(A) $ is the Lie algebra of derived vector fields acting $ \rHC_\bullet(A) $,
the derived space of functions on the noncommutative `$ \Spec $' of $A$.

\section{Stabilization Theorem}
\la{S4.6}
A fundamental theorem of Procesi \cite{P} implies that the algebra map
\begin{equation}
\la{proc}
\Sym[\Tr_V(A)_0]:\ \Sym[\HC_0(A)] \to A_V^{\GL(V)}
\end{equation}
is surjective for all $ V $.  A natural question is whether this result extends to higher traces:
namely, is the {\it full} trace map
\begin{equation}
\la{E19S4}
\bL \Tr_V(A)_\bullet :\ \bL[\HC(A)] \to \H_{\bullet}(A, V)^{\GL(V)}
\end{equation}
surjective? By analogy with homology of matrix Lie algebras (see \cite{T, LQ}),
we approach this question in two steps. First, we `stabilize' the family of maps
\eqref{E19S4} passing to an infinite-dimensional limit $ V \to V_\infty $ and prove
that \eqref{E19S4} becomes an isomorphism in that limit. Then, for a finite-dimensional $V$,
we construct obstructions to $ \H_{\bullet}(A, V)^{\GL(V)} $ attaining its `stable limit'.
These obstructions arise as homology of a complex that measures the failure of \eqref{E19S4}
being surjective. Thus, we answer the above question in the negative.
This section is a preliminary report on our results in this direction. The proofs are based on
invariant theory of inductive algebraic groups acting on inverse limits of algebras and modules
(see \cite{T-TT}); they will appear in our subsequent paper \cite{BR}.

We will work with unital DG algebras $A$ which are {\it augmented} over $k$. We recall that the
category of such DG algebras is naturally equivalent to the category of non-unital DG algebras,
with $ A $ corresponding to its augmentation ideal $ \bar{A} $. We identify these two categories
and denote them by $ \DGA_{k/k} $. Further, to simplify the notation we take $ V = k^d $ and
identify $\, \End\,V = M_d(k) \,$, $\, \GL(V) = \GL_k(d) \,$; in addition, for $ V = k^d $,
we will write  $ A_V $ as $ A_d $.
Bordering a matrix in $ M_d(k) $ by $0$'s on the right and on the bottom gives an embedding
$\,M_d(k) \into M_{d+1}(k) \,$ of non-unital algebras. As a result, for each $ B \in \cDGA_k $,
we get a map of sets
\begin{equation}
\la{isoun0}
\Hom_{\DGA_{k/k}}(\bar{A},\,M_d(B)) \to \Hom_{\DGA_{k/k}}(\bar{A},\,M_{d+1}(B))
\end{equation}
defining a natural transformation of functors from $ \cDGA_k $ to $ \Sets $.
Since $B$'s are unital and $ A $ is augmented, the restriction maps
\begin{equation}
\la{isoun1}
\Hom_{\DGA_k}(A,\,M_d(B)) \stackrel{\sim}{\to} \Hom_{\DGA_{k/k}}(\bar{A},\,M_d(B)) \ ,\quad
\varphi \mapsto \varphi|_{\bar{A}}
\end{equation}
are isomorphisms for all $ d \in \N $. Combining \eqref{isoun0} and \eqref{isoun1},
we thus have natural transformations
\begin{equation}
\la{isoun}
\Hom_{\DGA_k}(A,\,M_d(\,\mbox{--}\,)) \to \Hom_{\DGA_k}(A,\,M_{d+1}(\,\mbox{--}\,))\ .
\end{equation}
By standard adjunction (see Proposition~\ref{S2P1}), \eqref{isoun} yield an inverse system of
morphisms $\,\{ \mu_{d+1, d}: A_{d+1} \to A_d \} \,$ in $ \cDGA_k $. Taking the limit of this
system, we define
$$
A_{{\infty}} := \varprojlim_{d\,\in\,\mathbb N} A_d \ .
$$
Next, we recall that the group $ \GL(d) $ acts naturally on $ A_d $ (see Section~\ref{S2.3.4}),
and it is easy to check that $\,\mu_{d+1, d}: A_{d+1} \to A_d\,$ maps the subalgebra
$ A_{d+1}^{\GL} $ of $\GL $-invariants in $ A_{d+1} $ to the subalgebra $ A_d^{\GL}$ of
$\GL $-invariants in $A_d$. Defining $ \GL(\infty) := \varinjlim\, \GL(d) $ through
the standard inclusions $ \GL(d) \into \GL(d+1) $, we extend the actions of $ \GL(d) $
on $ A_d $ to an action of $ \GL(\infty) $ on $ A_{\infty} $ and let $ A^{\GL(\infty)}_{{\infty}} $
denote the corresponding invariant subalgebra. Then ({\it cf.} \cite{T-TT}, Theorem~3.4)
\begin{equation}
\la{isolim}
A^{\GL(\infty)}_{{\infty}} \cong \varprojlim_{d\,\in\,\mathbb N} A^{\GL(d)}_d \ .
\end{equation}
This isomorphism  allows us to equip $ A^{\GL(\infty)}_{{\infty}} $ with a natural topology:
namely, we put first the discrete topology on each $ A^{\GL(d)}_d $ and equip
$\,\prod_{d \in \N} A^{\GL(d)}_d \,$ with the product topology; then, identifying
$ A^{\GL(\infty)}_{{\infty}} $ with a subspace in
$\,\prod_{d \in \N} A^{\GL(d)}_d \,$ via \eqref{isolim}, we put on
$ A^{\GL(\infty)}_{{\infty}} $ the induced topology. The
corresponding topological DG algebra will be denoted $ A^{\GL}_{{\infty}} $.

Now, for each $ d \in \N $, we have the commutative diagram
\[
\begin{diagram}[small, tight]
  &       &     \FT(A)     &        & \\
  &\ldTo^{\Tr_{d+1}(A)_\bullet}  &           &  \rdTo^{\Tr_{d}(A)_\bullet} &  \\
A_{d+1}^{\GL} &       & \rTo^{\mu_{d+1, d}}  &        &  A_d^{\GL}
\end{diagram}
\]
where $\, \FT(A) = \FT(k \bs A) \,$ is the cyclic functor \eqref{cycd} restricted to $ \DGA_{k/k} $.
Hence, by the universal property of inverse limits, there is a morphism of complexes $\, \Tr_{\infty}(A)_\bullet :\, \FT(A) \to A^{\GL}_{{\infty}}\,$ that factors $ \Tr_{d}(A)_\bullet $ for each $ d \in \N $. We extend these trace maps to homomorphisms of commutative DG algebras:
\begin{equation}
\la{trd}
\bL \Tr_{d}(A)_\bullet :\ \bL[\FT(A)] \to A^{\GL}_{d}\ ,
\end{equation}
\begin{equation}
\la{trinf}
\bL \Tr_{\infty}(A)_\bullet :\ \bL[\FT(A)] \to A^{\GL}_{{\infty}}\ .
\end{equation}

The following theorem is one of the key technical results of \cite{BR}; Part $(a)$ can be viewed
as an extension of Procesi's Theorem \cite{P} to the realm of differential graded algebras.
\bthm
\la{dense}
$(a)$  The maps \eqref{trd} are surjective for all $ d \in \N $.

$(b)$  The map \eqref{trinf} is {\rm topologically} surjective: i.e.,
its image is dense in $ A^{\GL}_{{\infty}} $.
\ethm

\vspace{1ex}

\noindent
Letting $ A_\infty^{\Tr}  $ denote the image of \eqref{trinf}, we
define the functor
\begin{equation}
\la{trfun}
(\,\mbox{--}\,)^{\Tr}_{\infty}\,:\ \DGA_{k/k} \to \cDGA_k\ ,\quad A \mapsto A_\infty^{\Tr}\ .
\end{equation}
The algebra maps \eqref{trinf} then give a morphism of functors
\begin{equation}
\la{morfun}
\bL \Tr_{\infty}(\,\mbox{--}\,)_\bullet :\ \bL[\FT(\,\mbox{--}\,)] \to (\,\mbox{--}\,)^{\Tr}_{\infty}\ .
\end{equation}

Now, to state the main result of \cite{BR} we recall that
the category of augmented DG algebras $ \DGA_{k/k} $ has a natural model structure induced from
$ \DGA_k $ ({\it cf.} \ref{2.1.2}). We also recall the derived cyclic functor
$\, \L\FT(\,\mbox{--}\,):\, \Ho(\DGA_{k/k}) \to \Ho(\cDGA_k)\, $ provided by Theorem~\ref{FTT}.
\bthm
\la{eqfun}
$(a)$ The functor \eqref{trfun} has a total left derived functor
$\,\L(\,\mbox{--}\,)^{\Tr}_{\infty}\,:\ \Ho(\DGA_{k/k}) \to \Ho(\cDGA_k)\,$.

$(b)$ The morphism \eqref{morfun} induces an isomorphism of functors
$$
\bL \Tr_{\infty}(\,\mbox{--}\,)_\bullet :\  \bL[\L\FT(\,\mbox{--}\,)] \stackrel{\sim}{\to}
\L (\,\mbox{--}\,)^{\Tr}_{\infty}\ .
$$
\ethm
\noindent
By definition, $\, \L(\,\mbox{--}\,)^{\Tr}_{\infty} $ is given by
$\,
\L(\gamma\,A)^{\Tr}_{\infty} = (QA)^{\Tr}_{\infty}\,$,
where $ QA $ is a cofibrant resolution of $A$ in $ \DGA_{k/k}\,$. For an ordinary augmented $k$-algebra $A \in \Alg_{k/k} $, we set
$$
\DRep_\infty(A)^\Tr :=  (QA)^{\Tr}_{\infty} \ .
$$
By part $(a)$ of Theorem~\ref{eqfun}, $ \DRep_\infty(A)^\Tr $ is well defined. On the other hand, part $(b)$ implies
\begin{corollary}\la{corf1}
For any $ A \in \Alg_{k/k} $, $\, \bL \Tr_{\infty}(A)_\bullet $ induces an isomorphism of graded commutative algebras
\begin{equation}
\la{funhc}
\bL[\rHC(A)] \cong \H_\bullet[\DRep_\infty(A)^\Tr]\ ,
\end{equation}
where $ \rHC(A) $ is the reduced cyclic homology of $A$.
\end{corollary}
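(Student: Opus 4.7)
The plan is to deduce the corollary as a direct consequence of Theorem~\ref{eqfun}(b) by applying the homology functor and invoking two standard identifications.

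First, I would specialize Theorem~\ref{eqfun}(b) to the object $A \in \Alg_{k/k} \subset \DGA_{k/k}$. This yields an isomorphism in $\Ho(\cDGA_k)$:
\[
\bL\Tr_\infty(A)_\bullet :\ \bL[\L\FT(A)] \xrightarrow{\sim} \L(A)^\Tr_\infty \,=\, \DRep_\infty(A)^\Tr.
\]
Since a map in $\Ho(\cDGA_k)$ is an isomorphism iff any of its representatives is a quasi-isomorphism of commutative DG algebras, passing to homology on both sides induces an isomorphism of graded commutative algebras
\[
\H_\bullet\bigl(\bL[\L\FT(A)]\bigr) \,\xrightarrow{\sim}\, \H_\bullet[\DRep_\infty(A)^\Tr].
\]

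Next, I would identify the left-hand side. The key input here is the standard characteristic-zero fact that the graded symmetric algebra functor $\bL:\,\Com(k) \to \cDGA_k$ preserves quasi-isomorphisms and satisfies a K\"unneth-type identity $\H_\bullet[\bL(C)] \cong \bL[\H_\bullet(C)]$ for any complex $C$; this holds because over a field of characteristic zero, the symmetric power $\Sym^n$ is exact (as $S_n$-coinvariants agree with $S_n$-invariants). Applying this to $C = \L\FT(A)$ gives
\[
\H_\bullet\bigl(\bL[\L\FT(A)]\bigr) \,\cong\, \bL\bigl[\H_\bullet(\L\FT(A))\bigr].
\]
Finally, by Theorem~\ref{FTT} and Proposition~\ref{S4P1}$(b)$, the graded vector space $\H_\bullet(\L\FT(A)) = \H_\bullet(\L\FT(k\bs A))$ is precisely the reduced cyclic homology $\rHC(A)$ of the augmented algebra $A$. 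Combining these identifications yields the required isomorphism $\bL[\rHC(A)] \cong \H_\bullet[\DRep_\infty(A)^\Tr]$, and by construction it is induced by $\bL\Tr_\infty(A)_\bullet$.

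The only non-trivial step is the K\"unneth identification $\H_\bullet[\bL(C)] \cong \bL[\H_\bullet(C)]$, but this is a textbook fact in characteristic zero (it follows, for instance, from the fact that a quasi-isomorphism of complexes induces a quasi-isomorphism on each tensor power, and hence on $S_n$-coinvariants). Thus the main work has already been done: it is contained in Theorem~\ref{eqfun}, whose proof (given in \cite{BR}) rests on Theorem~\ref{dense} together with invariant-theoretic arguments for inductive algebraic groups. Once that theorem is in hand, the corollary reduces to the essentially formal manipulation sketched above.
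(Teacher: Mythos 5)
Your proposal is correct and follows exactly the route the paper intends: the corollary is presented there as an immediate consequence of Theorem~\ref{eqfun}$(b)$, and your argument simply fills in the routine steps (passing to homology, the characteristic-zero identity $\H_\bullet[\bL(C)]\cong\bL[\H_\bullet(C)]$, and the identification $\H_\bullet[\L\FT(k\bs A)]\cong\rHC(A)$ via Definition~\ref{S3D1} and Proposition~\ref{S4P1}$(b)$). No gaps.
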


In fact, one can show that $ \H_\bullet[\DRep_\infty(A)^\Tr] $
has a natural structure of a graded Hopf algebra,
and the isomorphism of Corollary~\ref{corf1} is actually an
isomorphism of Hopf algebras. This isomorphism should be
compared to the Tsygan-Loday-Quillen isomorphism \eqref{liehom5}.
Heuristically, it implies that the cyclic homology of an augmented
algebra is determined by its representation homology. Note that
the augmentation requirement is essential: for example, the
first Weyl algebra $ A_1(k) $ has trivial representation homology
(see Example~\ref{S2Ex1}) but its cyclic homology is nontrivial.

Next, we fix $ d \in \N $ and look at the trace homomorphism \eqref{trd}.
We let $\, K_\bullet(A,d) := \Ker\,[\bL \Tr_d(A)_\bullet]\, $ denote
its kernel: by part $(a)$ of Theorem~\ref{dense}, we then
have the short exact sequence of complexes
\begin{equation}
\la{obs}
0 \to K_\bullet(A,\,d) \to \bL[\FT(A)] \to A^{\GL}_{d} \to 0\ ,
\end{equation}
which is functorial in $A$. Replacing $A$ in \eqref{obs}
by its cofibrant resolution $QA$ and taking homology yields the long exact sequence
\begin{equation}
\la{obs1}
\ldots \to \HK_n(A,\,d) \to \bL[\rHC(A)]_n \to \H_n(A,\,d)^{\GL(d)}
\xrightarrow{\delta_n} \HK_{n-1}(A,\,d) \to \ldots
\end{equation}
where $\, \HK_n(A,\,d) := \H_n[K_\bullet(QA,\,d)]\,$. From \eqref{obs1}, we see that the obstructions to surjectivity
of $ \bL \Tr_d(A)_\bullet $ in degree $\, n\,$ `live' in the homology group $ \HK_{n-1}(A,\,d)$: these are the classes in the image of
the connecting homomorphisms $\delta_n $. In all negative degrees, the complex $\, K_{\bullet}(A,\,d) \,$ is acyclic,
which means, in particular, that the map $ \bL[\rHC(A)]_0 \to \H_0(A,\,d)^{\GL} $ is surjective. Since $ \bL[\rHC(A)]_0 =
\Sym[\rHC_0(A)] $ and $ \H_0(A,\,d)^{\GL} = A_d^\GL $, we recover the classical Procesi theorem, {\it cf.} \eqref{proc}.
On the other hand, in positive degrees ($ n > 0 $), the groups $ \HK_n(A,\,d) $ are nonzero, and there are examples
showing that $ \delta_n \not= 0 $. For further details we refer the reader to \cite{BR}.

\appendix
\section{Model categories}
\la{2}
A model category is a category enriched with a certain structure that allows one to do `homotopical algebra'
(see  \cite{Q1,Q2}). The prototypical example is the category of topological spaces; however, the theory
also applies to algebraic categories, including chain complexes, differential graded algebras and
differential graded modules. In this Appendix, we recall the definition of model categories and the basic
facts we need in the body of the paper.
These facts are mostly well known; apart from the original works of Quillen, good references are \cite{DS}, \cite{Hir}
and \cite{Ho}.

\subsection{}\la{2.1}\textbf{Definition.}
A {\it (closed) model category} is a category
$\C$ equipped with three distinguished classes of morphisms: {\it weak equivalences}
$ (\,\stackrel{\sim}{\to}\,) $, {\it fibrations} $ (\onto) $ and
{\it cofibrations} $ (\into) $. Each of these classes is closed under
composition and contains all identity maps. Morphisms that are both fibrations
and weak equivalences are called {\it acyclic fibrations} and denoted $\,\stackrel{\sim}{\onto}\,$.
Morphisms that are both cofibrations and weak equivalences are called {\it acyclic cofibrations}
and denoted $\,\stackrel{\sim}{\into}\,$.  The following five axioms are required.

\vspace{1ex}

\begin{lyxlist}{00.00.0000}
\item [{$\mathrm{MC}1$}]
$\C$ has all finite limits and colimits. In particular, $ \C $ has initial and terminal objects, which we denote
`$e$' and `$ * $', respectively. \smallskip{}

\item [{$\mathrm{MC}2$}] {\it Two-out-of-three axiom}:
If $f: X \to Y $ and $ g: Y \to Z $ are maps in $ \C $ and any
two of the three maps $f,$ $g,$ and $gf$ are weak equivalences, then so is the third. \smallskip{}

\item [{$\mathrm{MC}3$}]{\it Retract axiom}:
Each of the three distinguished classes of maps
is closed under taking retracts; by definition, $ f $ is a {\it retract} of $g$
if there is a commutative diagram
\begin{equation*}
\begin{diagram}[small, tight]
X               & \rTo  &  X'      &  \rTo  & X\\
\dTo^{f}        &       & \dTo^{g} &        & \dTo^{f} \\
Y               & \rTo  &  Y'      &  \rTo  & Y\\
\end{diagram}
\end{equation*}
such that the composition of the top and bottom rows is the identity.

\item [{$\mathrm{MC}4$}] {\it Lifting axiom}:
Suppose that
\begin{equation*}
\begin{diagram}[small, tight]
                   A &  \rTo^{} & X\\
\dInto^{}        &  \ruDotsto                      & \dOnto_{} \\
B                &  \rTo^{}                   & Y
\end{diagram}
\end{equation*}
is a square in which $ A \to B $ is a cofibration and $ X \to Y $ is a fibration.
Then, if either of the two vertical maps is a weak equivalence, there is a lifting $\,B \to X \,$ making the
diagram commute. We say that $ A \to B $ has the {\it left-lifting property} with respect to
$ X \to Y $, and $ X \to Y $ has a {\it right-lifting property} with respect to $ A \to B $.
\smallskip{}

\item [{$\mathrm{MC}5$}] {\it Factorization axiom}:
Any map $ A \to X $ in $\C$ may be factored in two ways:
$$
\mbox{\rm (i)}\ A \stackrel{\sim}{\into} B \onto X \ ,\qquad
\mbox{\rm (ii)}\ A \into Y \stackrel{\sim}{\onto} X \ .
$$
\end{lyxlist}

An object $ A \in\mathrm{Ob}(\C)$ is called {\it fibrant} if the unique morphism $\,A \to * \,$
is a fibration in $ \C$. Similarly, $\, A \in \mathrm{Ob}(\C)\,$ is {\it cofibrant} if the unique morphism $ e \to A $
is a cofibration in $ \C $. A model category $ \C $ is called {\it fibrant} (resp., {\it cofibrant})
if all objects of $ \C $ are fibrant (resp., {\it cofibrant}).

\subsection{} \la{2.2ex}
\textbf{Example.}
\la{Example1}
Let $A$ be an algebra, and let $ \Com^+(A) $ denote the category of complexes of $A$-modules that have zero terms in negative degrees\footnote{We will call such complexes non-negatively graded.}. This category has a standard model structure, where the weak equivalences are the quasi-isomorphisms, the fibrations are the maps that are surjective in all positive degrees and the
cofibrations are the monomorphisms whose cokernels are complexes with projective components (see \cite{DS},
Sect.~7).

The category $ \Com(A) $ of all (unbounded) complexes of $A$-modules also has a projective model structure
with quasi-isomorphisms being the weak equivalences and the epimorphisms being the fibrations. The cofibrations
in $ \Com(A) $ are monomorphisms with degreewise projective cokernels, however not all such
monomorphisms are cofibrations (see \cite{Ho}, Sect.~2.3).
In  $ \Com^+(A) $ and $ \Com(A) $ the initial and the terminal objects are the same, namely the zero complex. Thus
$ \Com^+(A) $ and $ \Com(A) $ are fibrant model categories.

\subsection{}
\la{newcon}
There are natural ways to construct a new model category from a given one:

\subsubsection{}
\la{2.1.1} The axioms of a model category are self-dual: if $ \C $ is a model category, then so is its opposite
$ \C^{\rm opp}$. The classes of weak equivalences in $ \C $ and $ \C^{\rm opp}$ are the same, while
the classes of fibrations and cofibrations are interchanged (see \cite{DS}, Sect.~3.9).

\subsubsection{}
\la{2.1.2}
If $ S \in \mathrm{Ob}(\C) $ is a fixed object in a model category $ \C $, then the category $ \C_S $ of arrows
$ \{S \to A \} $ starting at $S$ has a natural model structure, with a morphism $ f: A \rightarrow B $ being in a
distinguished class in
$ \C_S $  if and only if $f$ is in the corresponding class in $ \C $ (see \cite{DS}, Sect.~3.10). Dually, there
is a similar model structure on the category of arrows $ \{A \to S\} $ with target at $S$.

\subsubsection{}
\la{2.1.2m}
The category $ \Mor(\C) $ of all morphisms in a model category $ \C $ has a natural model structure, in which a morphism
$\,(\alpha, \beta):\, f \to f' \,$ given by the commutative diagram
\begin{equation*}
\begin{diagram}[small, tight]
A                 & \rTo^{\alpha}    &  A'         \\
\dTo^{f}          &                  & \dTo^{f'}   \\
B                 & \rTo^{\beta}     &  B'
\end{diagram}
\end{equation*}
is a weak equivalence (resp., a fibration) iff $ \alpha $ and $ \beta $ are weak equivalences (resp., fibrations)
in $ \C $. The morphism $ (\alpha, \beta) $ is a cofibration in $ \Mor(\C) $ iff $\, \alpha \,$ is a cofibration
and also the induced morphism $\,B \coprod_A A' \to B' \,$ is cofibration in $ \C $ ({\it cf.} \cite{R}, Corollary~7.3).

\subsubsection{}
\la{2.1.3}
Let $ \D := \{a \leftarrow b \rightarrow c\} $ be the category with three objects $ \{a,b,c\} $ and the two
indicated non-identity morphisms. Given a category $ \C $, let $ \C^\D $ denotes the category of
functors $\, \D \to \C\,$. An object in $ \C^\D $ is pushout data in $ \C $:
$$
X(a) \leftarrow X(b) \rightarrow X(c)\ ,
$$
and a morphism $ \varphi: X \to Y $ is a commutative diagram
\begin{equation*}
\la{pushd}
\begin{diagram}[small, tight]
X(a)                 & \lTo    & X(b)             & \rTo             & X(c) \\
\dTo^{\varphi_a}     &         & \dTo^{\varphi_b} &                  &\dTo^{\varphi_c}  \\
Y(a)                 & \lTo    & Y(b)             & \rTo             & Y(c)
\end{diagram}
\end{equation*}
If $ \C $ is a model category, then there is a (unique) model structure on  $ \C^\D $,
where  $ \varphi $ is a weak equivalence (resp., fibration) iff
$ \varphi_a $, $\, \varphi_b $, $\, \varphi_c $ are weak equivalences (resp., fibrations)
in $ \C $. The cofibrations in $ \C^\D $ are described as the morphisms $ \varphi = (\varphi_a,\,
\varphi_b,\, \varphi_c) $, with $ \varphi_b $ being a cofibration and also the two induced maps
$\, X(a) \coprod_{X(b)} Y(b) \to Y(a) \,$, $\, X(c) \coprod_{X(b)} Y(b) \to Y(c) \,$ being cofibrations
in $ \C $.
Dually, there is a (unique) model structure on the category of pullback data
$ \C^\D $, where $ \D := \{a \rightarrow b \leftarrow c\} $ (see \cite{DS}, Sects. 10.4 and 10.8)

\subsection{Homotopy category}
\la{2.2}
In an arbitrary model category, there are two different ways of defining the homotopy equivalence
relation. If $ \C $ is a fibrant model category, we can use only one definition (namely, the
so-called `left' homotopy), which is based on cylinder objects in $\C$.

From now on, we assume that $\C$ is a fibrant model category.

\subsubsection{}
\la{2.2.1}
Let $ A \in \mathrm{Ob}(\C) $. A {\it cylinder} on $ A $ is an object $\Cyl(A) \in \Ob(\C) $
given together with a diagram
$$
A\amalg A \stackrel{i}{\into} \Cyl(A)\overset{\sim}{\onto}A \ ,
$$
factoring the natural map $\,(\mathrm{id},\, \mathrm{id}):\,A\amalg A\rightarrow A $.
By MC5(ii), such an object exists for all $A$ and comes together with two morphisms
$\, i_0:\, A \to \Cyl(A) \,$ and $\, i_1:\, A \to \Cyl(A) \,$, which are the restrictions of
$ i $ to the two canonical copies of $A$ in $ A \amalg A$. In
the category of topological spaces, for each object $ A $, there is the natural cylinder
$ \Cyl(A) = A \times [0,\,1] $, which is functorial in $A$; however, in an arbitrary model category, the cylinder objects are neither unique nor functorial in $A$.

Dually, if $X\in \mathrm{Ob}(\C)$, a {\it path object} on $X$ is an object $\Path(X)$ together
with a diagram
$$
X \stackrel{\sim}{\into} \Path(X) \stackrel{p}{\onto} X \times X\,
$$
factoring the natural map $\,(\mathrm{id},\, \mathrm{id}):\,X \rightarrow X \times X $.
\subsubsection{}
\la{2.2.2}
If $\,f,g: A \rightarrow X\,$ are two morphisms in $\C$, a {\it homotopy}
from $f$ to $g$ is a map $\,H:\,\Cyl(A) \to X $ from a cylinder object on $A$ to $X$
such that the diagram
\begin{equation*}
\begin{diagram}[small, tight]
A & \rInto^{i_0} & \Cyl(A) & \lInto^{i_1} & A \\
  & \rdTo_{f} & \dDotsto^{H} & \ldTo_{g} & \\
  &           &   X
\end{diagram}
\end{equation*}
commutes. If such a map exists, we say that $ f $ is {\it homotopic} to $g$ and write $\,f \sim g \,$.

If $A$ is cofibrant, the homotopy relation between morphisms $\,f,g: A \rightarrow X\,$ can be
described in terms of path objects: precisely, $\,f \sim g\,$ iff there exists a map
$\,H:\,A \to \Path(X)\,$ for some path object on $X$ such that
\begin{equation*}
\begin{diagram}[small, tight]
  &           &   A\\
  & \ldTo^{f} & \dDotsto^{H} & \rdTo^{g} & \\
  X & \lOnto^{p_0} & \Path(X) & \rOnto^{p_1} & X
\end{diagram}
\end{equation*}
commutes. Also, if $A$ is cofibrant and $\,f \sim g\,$, then for any path object on $X$, there is a map
$\,H:A \to \Path(X)$ such that the above diagram commutes.

\subsubsection{}
\la{2.2.3}
Applying $\mathrm{MC}5(\mathrm{ii})$ to the canonical morphism
$ e\rightarrow A,$ we obtain a cofibrant object $QA$ with
an acyclic fibration $ QA\overset{\sim}{\twoheadrightarrow}A.$ This
is called a {\it cofibrant resolution} of $A.$ As usual, a cofibrant resolution
is not unique, but it is unique up to homotopy equivalence: for any pair of cofibrant
resolutions $QA,$ $Q'A,$ there exist morphisms
\[ QA\underset{g}{\overset{f}{\rightleftarrows}}Q'A\]
 such that $fg \sim \id $ and $gf \sim \id $.

\subsubsection{}
\la{2.2.4}
By $\mathrm{MC}4$, for any morphism $f: A \rightarrow X $ and any cofibrant resolutions
$QA \sonto A$ and $ QX \sonto A $ there is a map $\tilde{f}: QA \to QX $ making the
following diagram commute:
\begin{equation*}
\begin{diagram}[small, tight]
QA &  \rDotsto^{\tilde{f}} & QX\\
\dOnto       &                         & \dOnto \\
A              &  \rTo^{f}             & X
\end{diagram}
\end{equation*}
We call this map a cofibrant lifting of $\,f\,$; it is uniquely determined by $f$ up to homotopy.

\subsubsection{}
\la{2.2.5}
When $A$ and $X$ are both cofibrant objects in $ \C$, homotopy defines
an equivalence relation on $ \Hom_{\C}(A, X)$. In this case, we write
$$
\pi(A, X) :=  \Hom_{\C}(A, X)/\sim
$$
for the set of homotopy equivalence classes of maps from $A$ to $X$.
If $A'$ is another cofibrant object in $ \C $, then
\begin{equation}\la{comhom}
f \sim h:\,A \to A'\ ,\quad g \sim k:\,A'\to X\quad
\Rightarrow\quad gf \sim hk:\,A \to X\ .
\end{equation}
This last property allows one to give the following definition.

\subsubsection{}
\la{2.2.6e}
The {\it homotopy category} of $ \C $ is a category $ \Ho(\C) $ with $ \Ob(\Ho(\C)) = \Ob(\C) $
and
$$
\Hom_{\Ho(\C)}(A,\,X) := \pi(QA,\,QX)\ ,
$$
where $QA$ and $QX$ are cofibrant resolutions of $A$ and $X$.
Note that \eqref{comhom} ensures that composition in $ \Ho(\C) $
is well defined.

\subsubsection{}
\la{2.2.6}
There is a canonical functor $\gamma:\,\C \rightarrow \Ho(\C)$ acting as
the identity on objects while sending each morphism $ f \in \C $ to the homotopy
class of its lifting $ \tilde{f} \in \Ho(\C)$ (see \ref{2.2.4}).

\begin{theorem}\la{Tloc}
Let $\C $ be a model category, and $ \D $ any category.
Given a functor $ F: \C \rightarrow \D $ sending weak equivalences to isomorphisms,
there is a unique  functor $ \bar{F}:\Ho(\C)\rightarrow \D $ such that $ \bar{F} \circ \gamma =F \,$.
\end{theorem}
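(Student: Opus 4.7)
The plan is to construct $\bar{F}$ explicitly using cofibrant resolutions, verify its well-definedness on homotopy classes, and then check functoriality and uniqueness. On objects, since $ \gamma $ acts as the identity, we are forced to set $\bar{F}(A) := F(A)$. On morphisms, a class $[g] \in \pi(QA, QX) = \Hom_{\Ho(\C)}(A, X)$ is represented by some $g: QA \to QX$; for fixed cofibrant resolutions $p_A: QA \sonto A$ and $p_X: QX \sonto X$, define
$$\bar{F}([g]) := F(p_X) \circ F(g) \circ F(p_A)^{-1}\,,$$
which makes sense because $F$ inverts weak equivalences, so that $F(p_A)$ and $F(p_X)$ are invertible in $\D$.

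The crucial step is showing that $\bar{F}([g])$ depends only on the homotopy class. Suppose $g \sim g'$ via a homotopy $H:\Cyl(QA) \to QX$ with $H\,i_0 = g$ and $H\, i_1 = g'$. By definition, the cylinder factorization $QA \amalg QA \into \Cyl(QA) \xrightarrow{p} QA$ has $p$ a weak equivalence, and $p \circ i_0 = p \circ i_1 = \id_{QA}$. By axiom MC2 (two-out-of-three), both $i_0$ and $i_1$ are then weak equivalences, so $F(p), F(i_0), F(i_1)$ are isomorphisms in $\D$. The equalities $F(p) F(i_0) = F(p) F(i_1) = \id$ force $F(i_0) = F(i_1)$, whence $F(g) = F(H) F(i_0) = F(H) F(i_1) = F(g')$. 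The same argument (applied at various places) shows that $\bar{F}$ is independent of the choice of resolution: any two cofibrant resolutions of $A$ are linked by mutually inverse homotopy equivalences (see \ref{2.2.3}), which $F$ therefore sends to mutually inverse isomorphisms.

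Functoriality of $\bar{F}$ is straightforward: $\bar{F}([\id_{QA}]) = F(p_A) F(p_A)^{-1} = \id_{F(A)}$, and composition in $\Ho(\C)$ is represented by strict composition of lifts, so the middle factors telescope. To verify $\bar{F} \circ \gamma = F$, take any $f: A \to X$ in $\C$ and a cofibrant lift $\tilde{f}: QA \to QX$ satisfying $p_X \tilde{f} = f\, p_A$; then $F(p_X) F(\tilde{f}) = F(f) F(p_A)$, so
$$\bar{F}(\gamma f) = F(p_X)\, F(\tilde f)\, F(p_A)^{-1} = F(f)\,.$$
For uniqueness, suppose $G:\Ho(\C)\to \D$ satisfies $G\circ \gamma = F$. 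Then $G$ agrees with $\bar F$ on objects, $G$ must invert all $\gamma(p_A)$ (which are isomorphisms in $\Ho(\C)$), and any morphism $[g]:A\to X$ factors as $\gamma(p_X)\circ[g]_{QA,QX}\circ\gamma(p_A)^{-1}$, where $[g]_{QA,QX}$ is the image under $\gamma$ of $g:QA\to QX$; applying $G$ to this factorization yields exactly the formula defining $\bar F([g])$, so $G=\bar F$. The main obstacle is the well-definedness argument of the previous paragraph, where cofibrancy of the source must intervene (through the cylinder construction) to force $F$ to identify homotopic maps; everything else is bookkeeping.
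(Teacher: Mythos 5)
Your argument is correct, and it is the standard proof of this localization statement. Note that the paper itself does not prove Theorem~\ref{Tloc}: it is stated in Appendix~A as background, quoted from the standard references on model categories, so there is no in-paper proof to compare against; your construction $\bar{F}([g]) := F(p_X)\,F(g)\,F(p_A)^{-1}$, with well-definedness via the cylinder factorization and MC2, is exactly the expected route.

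Two minor remarks, neither of which is a gap. First, your closing sentence slightly misplaces where cofibrancy enters: the argument that $F(i_0)=F(i_1)$, hence $F(g)=F(g')$ for left-homotopic $g,g'$, uses only MC2 and the factorization $QA\amalg QA \into \Cyl(QA)\sonto QA$, and works for an arbitrary source. Cofibrancy of $QA$ (together with fibrancy of $\C$) is what makes left homotopy an equivalence relation compatible with composition, i.e.\ what makes $\Ho(\C)$ a category at all --- but that is presupposed here. Second, in the uniqueness step the factorization $[g]=\gamma(p_X)\circ\gamma(g)\circ\gamma(p_A)^{-1}$ deserves one line: with the paper's conventions (\ref{2.2.4}, \ref{2.2.6}) one checks that $\gamma(p_X)$ is represented by the structure map $Q(QX)\sonto QX$, whence $[g]\circ\gamma(p_A)=\gamma(p_X)\circ\gamma(g)$ in $\pi(Q(QA),QX)$; applying $G$ and using that $F(p_A)$ is invertible in $\D$ then forces $G([g])=F(p_X)F(g)F(p_A)^{-1}=\bar{F}([g])$, without even needing to invoke invertibility of $\gamma(p_A)$ in $\Ho(\C)$.
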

Theorem~\ref{Tloc} shows that the category $ \Ho(\C) $ is the (universal) localization of $ \C $
at the class $ W $ of all weak equivalences, and thus $ \Ho(\C) $ depends only on $ \C $ and $ W $.
However, the extra structure (the choice of fibrations and cofibrations) in $\C$ is needed to
describe the morphisms in $ \Ho(\C)$.

\subsubsection{}
\la{2.4.1}
The next proposition is an abstract version of the classical Whitehead Theorem, which asserts that a weak homotopy equivalence between CW-complexes is actually a homotopy equivalence.
\begin{proposition}
\la{WhP}
Let $ f: A \to B $ be a morphism between two cofibrant objects in a (fibrant) model category $\C$.
Then $ f $ is a weak equivalence in $ \C $ if and only if $ f $ has a homotopy inverse, i.e.
there exists a morphism $ g: B \to A $ such that $ gf $ and $ fg $ are homotopic to
$ \id_A $ and $ \id_B $, respectively.
\end{proposition}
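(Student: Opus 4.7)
The plan is to establish the two implications separately, taking advantage of the assumption that $\C$ is fibrant, so that every object (in particular $A$ and $B$) is both cofibrant and fibrant; by \ref{2.2.5}, homotopy is then an equivalence relation on $\Hom_\C(A,B)$ compatible with composition on both sides. For the only-if direction I would factor $f$ via $\mathrm{MC}5(\mathrm{i})$ as $A \stackrel{i}{\sinto} W \stackrel{p}{\onto} B$ with $i$ an acyclic cofibration and $p$ a fibration; since $f$ is a weak equivalence, MC2 forces $p$ to be acyclic. The object $W$ is cofibrant (composition of the cofibrations $e \into A \into W$) and fibrant (since $\C$ is). Using MC4 I would lift $\id_B$ against $p$ to obtain a genuine section $s:B \to W$ with $ps=\id_B$ (the left cofibration being $e\into B$, available since $B$ is cofibrant), and dually lift $\id_A$ along the fibration $A\onto *$ against the acyclic cofibration $i$ to produce a retraction $r:W\to A$ with $ri=\id_A$.

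To promote $s$ and $r$ to two-sided homotopy inverses I would argue as follows. Choose a cylinder object $W\amalg W\into \Cyl(W)\stackrel{\pi}{\onto} W$ in which the first map is a cofibration. The maps $\id_W$ and $sp$ agree after post-composition with $p$ (both equal $p$, using $ps=\id_B$), so the pair $(\id_W,sp):W\amalg W\to W$ together with $p\pi:\Cyl(W)\to B$ fits into a square whose left arrow is the cylinder cofibration and whose right arrow is the acyclic fibration $p$; MC4 supplies a lift $H:\Cyl(W)\to W$ witnessing $sp\sim\id_W$. Dually, using a path object $W\sinto\Path(W)\stackrel{(e_0,e_1)}{\onto}W\times W$, the pair $(ir,\id_W):W\to W\times W$ agrees with $(i,i)$ after pre-composition with $i$, so MC4 applied to the resulting square (acyclic cofibration $i$ on the left, fibration $(e_0,e_1)$ on the right) yields a right homotopy $ir\sim\id_W$, which coincides with a left homotopy since $W$ is cofibrant-fibrant. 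Composition-compatibility of $\sim$ then gives $g:=rs$ with $fg=pirs\sim ps=\id_B$ and $gf=rspi\sim ri=\id_A$.

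For the if direction, choose homotopies $H:\Cyl(A)\to A$ from $\id_A$ to $gf$ and $H':\Cyl(B)\to B$ from $\id_B$ to $fg$. The projection $\Cyl(A)\sonto A$ is an acyclic fibration, so its sections $i_0,i_1:A\to\Cyl(A)$ are weak equivalences by MC2. Since $Hi_0=\id_A$ is a weak equivalence and $i_0$ is a weak equivalence, MC2 forces $H$ itself to be a weak equivalence; hence $gf=Hi_1$ is a composition of weak equivalences, and similarly $fg$ is a weak equivalence. By \ref{2.2.6e} the homotopies descend to equalities $\gamma(g)\gamma(f)=\id_{\gamma A}$ and $\gamma(f)\gamma(g)=\id_{\gamma B}$, so $\gamma(f)$ is an isomorphism in $\Ho(\C)$; from this I would conclude that $f$ is a weak equivalence via the standard model-categorical fact that the localization $\gamma$ reflects weak equivalences, which is a consequence of the universal property of Theorem~\ref{Tloc}.

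The main obstacle is this last step of the if direction: passing from $\gamma(f)$ being an isomorphism (equivalently, from $gf$ and $fg$ being weak equivalences) to $f$ itself being a weak equivalence. The two-out-of-three axiom MC2 alone does not suffice, and one must invoke either the two-out-of-six property of weak equivalences or the saturation theorem for the localization, both of which are nontrivial consequences of the axioms MC1--MC5 rather than formal ones. A cleaner self-contained route avoiding this appeal is to apply the only-if construction above to the acyclic cofibration part of a factorization $f=p'i'$, transfer the given homotopy inverse of $f$ through $i'$ into a homotopy section of $p'$, and then exploit the covering homotopy property of the fibration $p'$ (again via MC4) to upgrade this to a genuine section witnessing $p'$ as an acyclic fibration, whence $f=p'i'$ is a weak equivalence.
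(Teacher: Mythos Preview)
Your proposal is correct and is essentially the standard argument (as in \cite{DS}, Sect.~4.24). Note, however, that the paper does not actually give a proof of this proposition: it explicitly refers the reader to \cite{DS} and only spells out the easy special case where $f$ is an acyclic \emph{cofibration}, obtaining a strict retraction $g$ with $gf=\id_A$ via MC4 and deducing $fg\sim\id_B$ from the fact that $\gamma(f)$ and $\gamma(g)$ are mutually inverse in $\Ho(\C)$.

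Your full argument goes well beyond this sketch. The only-if direction via the factorization $f=pi$ and the two MC4 liftings producing the homotopies $sp\sim\id_W$ and $ir\sim\id_W$ is exactly the standard proof. For the if direction you correctly isolate the genuine difficulty: from $gf$ and $fg$ being weak equivalences one cannot conclude that $f$ is a weak equivalence by MC2 alone, and one needs either two-out-of-six or saturation of $\gamma$. Your proposed self-contained route---factor $f=p'i'$ with $i'$ an acyclic cofibration, use the covering homotopy property of the fibration $p'$ to upgrade a homotopy section to a strict section $s$, and then observe that $p'$ is a retract of $sp'\sim\id$ (hence a weak equivalence by MC3)---is precisely how \cite{DS} handles it, so your plan would go through once the retract diagram is written down.
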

For the proof of Proposition~\ref{WhP} we refer to \cite{DS}, Sect.~4.24.
We will often use the following special case which is easy to prove directly. Suppose that
$ f: A \stackrel{\sim}{\into} B $ be an acyclic {\it cofibration} between two cofibrant objects in $\C$. Applying MC4 to the diagram
\begin{equation*}
\begin{diagram}[small, tight]
A &  \rTo^{\id}                          & A \\
\dInto^{f}       &  \ruDotsto^{g}        & \dTo \\
B                &  \rTo                   & \ast
\end{diagram}
\end{equation*}
(and using the fact that $\C$ is fibrant), we see that there exists $ g: B \rar A $ such that $\,
gf = \id_A \,$.
Since $f$ and $g$ are weak equivalences, $\gamma(f)$ and $\gamma(g)$ are mutually inverse isomorphisms in $\Ho(\C)$. Hence $fg$ is homotopic to $\id_B$.

\subsubsection{Remark}
\la{2.2.7}
A model category is called {\it pointed} if its initial and terminal objects coincide: $\, e = \ast \,$.
Despite its modest appearance this condition has a profound effect on the homotopy category: if $ \C $ is
a pointed model category, then $ \Ho(\C) $ has a suspension functor $ \Sigma $, which (in case when it
is an equivalence) makes $ \Ho(\C) $ a triangulated category. This happens for the category $ \C = \Com(A) $
of unbounded complexes of $A$-modules equipped with the projective model structure (see Example~\ref{Example1}).
In this case, the homotopy category $ \Ho(\C) $ is isomorphic (as a triangulated category) to
the derived category $ \D(A) $ of $ \Com(A) $ (see \cite{Ho}, Sect.~7.1).

\subsection{Derived functors}
\la{2.3}
Let $ F:\C \to \D$ be a functor between model categories.
A {\it (total) left derived functor} of $F$ is a
functor $\,\L F: \,\Ho(\C) \to \Ho(\D) \,$ given together with a natural
transformation
\[
\L F: \,\Ho(\C) \to \Ho(\D)\ ,\qquad t:\,\L F\circ\gamma_{\C} \to \gamma_{\D} \circ F
\]
which are universal with respect to the following property: for any pair
\[
G:\Ho(\C) \rightarrow \Ho(\D),\qquad s: \, G\circ\gamma_{\C} \rightarrow \gamma_{\D} \circ F
\]
there is a unique natural transformation $\, s':\, G \rightarrow \L F\,$ such that
\[
\begin{diagram}[small, tight]
G\circ\gamma_{\C} &                        &  \rTo^{s}      &                &  \gamma_{\D} \circ F  \\
             & \rdDotsto_{s' \gamma}  &                &  \ruTo_{t} &  \\
             &                      & \L F \circ \gamma_{\C} &               &
\end{diagram}
\]

There is a dual notion of a {\it right derived functor} $\R F$ obtained by
reversing the arrows in the above definition ({\it cf.} \ref{2.1.1}).
If they exist, the functors $ \L F $ and $ \R F $ are unique up to
canonical natural equivalence. If $ F $ sends weak equivalences to weak
equivalence, then both $ \L F $ and $ \R F $ exist and, by Theorem~\ref{Tloc},
$$
\L F = \gamma\,\bar{F} = \R F\ ,
$$
where $ \bar{F}: \Ho(\C) \to \D $ is the extension of $F$ to $ \Ho(\C)$.
In general, the functor $F$ does not extend to $ \Ho(\C) $, and
$ \L F $ and $ \R F $ should be viewed as the best possible approximations to
such an extension `from the left' and `from the right', respectively.

\subsubsection{Remark}
\la{L1S4}
Let $\, F, G: \C \to \D \,$ be two functors between model categories. Assume that the
total derived functors $\, \L F, \L G : \Ho(\C) \to \Ho(\D) \,$ exist. Then, it follows
from the universal properties of $ \L F $ and $ \L G $ that any natural transformation
$\, \varphi: F \to G\, $ induces a (unique) natural transformation $\, \varphi:
\L F \to \L G \,$.  Abusing notation, we denote these two natural transformations
by the same symbol.

\subsection{Quillen's Theorem}
\la{2.4}
One of the main results in the theory of model categories is Quillen's Adjunction Theorem.
This theorem establishes the existence of derived functors for a pair of adjoint functors satisfying
certain natural conditions. We state these conditions in the following lemma, which is a simple
consequence of the basic axioms ({\it cf.} \cite{DS}, Remark~9.8).

\blemma
\la{Qpair}
Let $ \C $ and $ \D $ be model categories. Let
\begin{equation*}
F:\, \C\rightleftarrows\D \,: G
\end{equation*}
be a pair of adjoint functors. Then the following conditions are equivalent:

$(a)$ $F$ preserves cofibrations and acyclic cofibrations,

$(b)$ $G$ preserves fibrations and acyclic fibrations,

$(c)$ $F$ preserves cofibrations and $G$ preserves fibrations.
\elemma

\noindent
A pair of functors $ (F,\,G) $ satisfying the conditions of Lemma~\ref{Qpair}
is called a {\it Quillen pair}.

\begin{theorem}[Quillen]
\la{Qthm}
Let  $\,F:\,\C\rightleftarrows\D :G\,$ be a Quillen pair.
Then the total derived functors $\L F$ and $\R G$ exist and form an adjoint pair
$$
\L F:\Ho(\C)\rightleftarrows \Ho(\D):\R G\, .
$$
The functor $\L F$ is defined by
\begin{equation}
\la{derf}
\L F(A) = \gamma\,F(QA)\ ,\quad \L F(f) = \gamma\,F(\tilde{f})\ ,
\end{equation}
where $ QA \sonto A $ is a cofibrant
resolution in $ \C $ and $ \tilde{f} $ is a lifting of $ f $ (see \ref{2.2.4}).
\end{theorem}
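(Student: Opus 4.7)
The plan is to prove the theorem in three stages: first establish that $F$ and $G$ admit total derived functors by checking Ken Brown's Lemma, then construct the derived adjunction, and finally verify the universal property.

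First, I would prove \emph{Ken Brown's Lemma}: any functor from a model category to a model category that sends acyclic cofibrations between cofibrant objects to weak equivalences in fact sends \emph{all} weak equivalences between cofibrant objects to weak equivalences. The standard trick is to factor a weak equivalence $f: A \xrightarrow{\sim} B$ between cofibrant objects through the pushout $A \amalg B$ and then use $\mathrm{MC}5$ to write $(f, \id_B): A \amalg B \to B$ as an acyclic cofibration followed by a fibration; the retract axiom and the two-out-of-three axiom then identify both $A \to A \amalg B \sonto C$ and $B \to A \amalg B \sonto C$ as acyclic cofibrations between cofibrant objects, from which the claim follows. Applied to $F$ (which preserves acyclic cofibrations by hypothesis), this shows that $F$ preserves weak equivalences between cofibrant objects; dually for $G$ between fibrant objects.

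Next, I would define $\L F$ on objects by $\L F(A) := \gamma_{\D}\,F(QA)$ for a choice of cofibrant resolution $QA \sonto A$, and on morphisms by $\L F(\gamma f) := \gamma_{\D}\,F(\tilde{f})$, where $\tilde{f}: QA \to QB$ is a cofibrant lifting of $f$, as in \ref{2.2.4}. By Ken Brown's Lemma applied to $F$, the image $F(\tilde f)$ is well-defined up to homotopy (since any two liftings are homotopic, and $F$ preserves the relevant homotopies via cylinder objects: if $\Cyl(QA)$ is a cylinder, $F(\Cyl(QA))$ is a cylinder on $F(QA)$ because $F$ preserves pushouts as a left adjoint and sends the structure maps to acyclic cofibrations/weak equivalences between cofibrants). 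The natural transformation $t: \L F \circ \gamma_{\C} \to \gamma_{\D} \circ F$ is induced by the resolution maps $QA \sonto A$.

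Then, I would verify the universal property. Given any functor $G': \Ho(\C) \to \Ho(\D)$ and a natural transformation $s: G' \circ \gamma_{\C} \to \gamma_{\D}\circ F$, define $s': G' \to \L F$ on an object $A$ as the composition $G'(A) = G'(\gamma A) \xrightarrow{G'(\gamma p)^{-1}} G'(\gamma QA) \xrightarrow{s_{QA}} \gamma F(QA) = \L F(A)$, where $p: QA \sonto A$ is a cofibrant resolution (note $G'(\gamma p)$ is invertible since $p$ is a weak equivalence). Uniqueness follows from naturality and the definition of $t$. This establishes that $\L F$ is indeed the total left derived functor; the dual argument (swapping cofibrations/fibrations and weak equivalences between cofibrants/fibrants) gives $\R G$.

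The main obstacle will be the last step: constructing the adjunction $\Hom_{\Ho(\D)}(\L F(A), Y) \cong \Hom_{\Ho(\C)}(A, \R G(Y))$. The key observation is that the original adjunction $\Hom_{\D}(F(X), Y) \cong \Hom_{\C}(X, G(Y))$ descends to homotopy classes when $X$ is cofibrant and $Y$ is fibrant: indeed, if $X$ is cofibrant then $F(X)$ is cofibrant (since $F(e_{\C}) = e_{\D}$ and $F$ preserves cofibrations), so a cylinder $\Cyl(F(X))$ can be built by factoring $F(X \amalg X) = F(X) \amalg F(X) \to F(X)$; the image of a cylinder $\Cyl(X)$ under $F$ gives such an object (here one uses that $F$ preserves coproducts as a left adjoint and weak equivalences between cofibrants). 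Homotopies on the $\D$-side thus correspond, via adjunction, to maps into a path object on the $\C$-side, giving a bijection $\pi(F(X), Y) \cong \pi(X, G(Y))$. Taking $X = QA$ and $Y = RY'$ (fibrant replacement) yields the desired adjunction on $\Ho$, and naturality is inherited from the unenriched adjunction.
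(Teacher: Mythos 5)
The paper gives no proof of this theorem: it simply refers to Dwyer--Spalinski, Sect.~9, noting that the argument rests on Brown's Lemma (Lemma~\ref{BrL}), and your proposal reconstructs precisely that standard proof (Brown's Lemma, well-definedness of $\L F$ via preservation of cylinders on cofibrant objects, the universal property, and the adjunction via $\pi(F(X),Y)\cong\pi(X,G(Y))$ for $X$ cofibrant and $Y$ fibrant), so the approach is the same. One correction: in your sketch of Brown's Lemma the factorization of $(f,\id_B)\colon A\amalg B\to B$ must be a cofibration followed by an \emph{acyclic fibration} (MC5(ii)), not an acyclic cofibration followed by a fibration as you wrote; only with the weak equivalence sitting on the second factor does two-out-of-three force the composites $A\to A\amalg B\to C$ and $B\to A\amalg B\to C$ to be acyclic cofibrations between cofibrant objects, which is exactly what the argument needs (the retract axiom plays no role here).
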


\noindent
For the proof of Theorem~\ref{Qthm} we refer to \cite{DS}, Sect.~9; here, we only mention one
useful result on which this proof is based ({\it cf.} \cite{DS}, Lemma~9.9).
\blemma[Brown]
\la{BrL}
If $ F:\,\C \to \D $ carries acyclic cofibrations between
cofibrant objects in $ \C $ to weak equivalences in $\D$,
then $\L F $ exists and is given by formula \eqref{derf}.
\elemma

\subsubsection*{Remark}
\la{Qrem}
In the situation of Theorem~\ref{Qthm}, if $\D$ is a fibrant category,
then $ \R G = G $. This follows from the fact that the derived functor
$ \R G $ is defined by applying $ G $ to a fibrant resolution similar
to \eqref{derf}.

\subsection{}
\la{colim}
\textbf{Example.}
\la{Example2}
Let $ \C^\D $ be the category of pushout data in a model category $ \C $
(see \ref{2.1.3}). The colimit construction gives a functor
$\,\colim:\,\C^\D \to \C \,$ which is left adjoint to the
diagonal functor
$$
\Delta:\,\C \to \C^\D \ , \quad A \mapsto \{A \xleftarrow{\id} A \xrightarrow{\id} A\}\ .
$$
Theorem~\ref{Qthm} applies in this situation giving the adjoint pair
(see \cite{DS}, Prop.~10.7)
$$
\L \colim:\Ho(\C^\D)\rightleftarrows \Ho(\C):\R \Delta\, .
$$
The functor $ \L \colim $ is called the {\it homotopy pushout functor}.

\subsection{Proper model categories}
\la{2.5} The following lemma is an easy consequence of basic axioms (see \cite{DS}, Sect.~3.14).
\blemma
\la{popb}
Let $ \C $ be a model category. Then

$(a)$ the class of cofibrations (resp., acyclic cofibrations) is stable under pushouts in $ \C $,

$(b)$ the class of fibrations (resp., acyclic fibrations) is stable under pullbacks in $ \C $.
\elemma

If we replace the cofibrations (or fibrations) in the above lemma by the weak equivalences,
neither $(a)$ nor $(b)$ will longer be true in general. This motivates the following definition.

\subsubsection{}
\la{2.5.1}
A model category $\C$ is called {\it left proper} if the pushout of any weak equivalence along a cofibration is
a weak equivalence. Dually, a model category $\C$ is called {\it right proper} if the pullback of any weak equivalence along a fibration is a weak equivalence. A model category $\C$ is called {\it proper} if it is both left proper and right proper. It is easy to see that any cofibrant model category $\C$ is left proper, and dually, any fibrant model category $\C$ is right proper.
For basic properties and examples of proper model categories we refer to \cite{Hir}, Chapter 13. Here we only prove
one technical result which we use in the present paper.
\bprop
\la{prmcat}
Let $ \C $ be a (left) proper model category. Let $ f:A \into B $ be a cofibration in $ \C $, with $ A $ being a cofibrant object. Then, for any morphism $\, g: A \rar C \,$,
the homotopy pushout of $ g $ along $f$ and the ordinary pushout of $g$ along $f$ are isomorphic
in $\Ho(\C)$.
\eprop
\begin{proof}
Choose a factorization $g=pi$ where $i:A \rar D$ is a cofibration and $p:D \rar C$ is an acyclic fibration.
Consider the following diagram where the two smaller squares are cocartesian in $ \C \,$:
$$
\begin{diagram}[small, tight]
A         & \rInto^{i} & D             & \rOnto^{p} & C \\
\dInto^{f}  &          & \dInto^{\alpha} &          & \dTo \\
B         &  \rTo    & E             &  \rTo^{h} & F
\end{diagram}
$$
Clearly, the outer square in the above diagram is then also cocartesian. Hence, $F$ is the pushout of $g=pi$ along $f$. Further, since $A$ is cofibrant and $i,f$ are cofibrations and $p$ an acyclic fibration, $E$ is the homotopy pushout of $g$ along $f$. It therefore, suffices to check that $h$ is a weak equivalence. Since $\alpha$ is the pushout of a cofibration, $\alpha$ is a cofibration. Since $h$ is the pushout of the weak equivalence $p$ along the cofibration $\alpha$, it is a weak equivalence since
$\C$ is left proper.
\end{proof}
\section{Differential graded algebras}
\la{3}
We now recall basic facts about model categories of DG algebras. While most of these are well known, there are some results for which we could not find a reference: these include part of
Proposition~\ref{lhom} relating different notions of homotopies between DG algebras and Proposition~\ref{dgproper} on properness of model categories of DG algebras. For the reader's convenience, we provide proofs.

\subsection{}\la{3.1}
Recall that by a {\it DG algebra} we mean a $\Z$-graded associative $k$-algebra equipped with a
differential of degree $-1$. We write $ \DGA_k $ for the
category of all such algebras and denote by $ \cDGA_k $ the full subcategory
of $ \DGA_k $ consisting of commutative DG algebras. On these categories,
there are standard model structures which we describe in the next theorem.

\begin{theorem}
\la{modax}
The categories $ \DGA_k $ and $ \cDGA_k$ have model structures in which

(i)\ the weak equivalences are the quasi-isomorphisms,

(ii)\ the fibrations are the maps which are surjective in all degrees,

(iii)\ the cofibrations are the morphisms having the left-lifting property
with respect to acyclic fibrations ({\it cf.} MC4).

\noindent
Both categories $ \DGA_k $ and $ \cDGA_k $ are fibrant, with the initial object $ k $ and
the terminal $0$.
\end{theorem}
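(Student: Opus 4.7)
The plan is to verify the five model category axioms for $\DGA_k$ (resp.\ $\cDGA_k$) with the three classes as defined. Since quasi-isomorphisms obviously satisfy the two-out-of-three axiom MC2, and all three classes are manifestly closed under retracts (MC3), the substantial work will concentrate on MC1, MC4, and MC5. The fact that $k$ is initial and $0$ is terminal, and that these categories are fibrant, is essentially tautological once the fibrations are described as surjections.

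For MC1, I would note that all small limits in $\DGA_k$ are computed at the level of underlying complexes and inherit a DG algebra structure in the evident way. Colimits are more delicate: filtered colimits and coequalizers are computed in $\Com(k)$, while coproducts in $\DGA_k$ are given by free products $A \ast_k B$, and in $\cDGA_k$ by the tensor product $A \otimes_k B$. Pushouts can then be constructed from coproducts and coequalizers. The same argument works verbatim in $\cDGA_k$ using commutative free products.

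The heart of the proof is MC5, which I would establish via Quillen's small object argument. Let $D^n$ denote the acyclic complex $k \xrightarrow{\id} k$ concentrated in degrees $n, n-1$, and let $S^{n-1}$ denote $k$ in degree $n-1$. Take as generating cofibrations the set $I = \{T(S^{n-1}) \hookrightarrow T(D^n) : n \in \Z\}$ and as generating acyclic cofibrations $J = \{k \hookrightarrow T(D^n) : n \in \Z\}$, where $T = T_k$ is the tensor algebra functor (replaced by the symmetric algebra $\Sym_k$ in the commutative case). The small object argument then factors any map $A \to B$ as an $I$-cell complex followed by a map with the right lifting property (RLP) with respect to $I$, and similarly with $J$. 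A standard check shows that a map has the RLP with respect to $I$ iff it is an acyclic fibration, and with respect to $J$ iff it is a fibration; this yields both factorizations in MC5, and MC4 for the $(\text{cofibration}, \text{acyclic fibration})$ pair.

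The main obstacle, and the step where the characteristic-zero hypothesis enters decisively, is showing that the maps in $J$ are acyclic cofibrations in $\cDGA_k$ and that pushouts preserve this property. Concretely, one must show that if $A \into B$ is a cofibration produced from $J$ by transfinite composition and pushouts, then $A \to B$ is a quasi-isomorphism; in $\cDGA_k$ this reduces to showing that $\Sym_k(D^n)$ is acyclic and, more importantly, that base change along cofibrations preserves quasi-isomorphisms of commutative DG algebras — this uses that $\Sym_k$ of an acyclic complex is acyclic over a field of characteristic zero (failure of this in positive characteristic is the reason the theorem is restricted to $\mathrm{char}(k) = 0$). Once this is in hand, a retract argument gives the second half of MC4: any acyclic cofibration is a retract of a $J$-cell, hence has the LLP with respect to all fibrations. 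The analogous small object argument for $\DGA_k$ is simpler since $T_k$ preserves quasi-isomorphisms in any characteristic, so that case proceeds without the characteristic restriction.
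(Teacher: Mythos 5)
Your proposal is essentially correct, but it is worth noting that the paper does not prove Theorem~\ref{modax} at all: it simply observes that the statement is a special case of Hinich's theorem on model structures for algebras over an operad (\cite{H}, Theorem~4.1.1) and, alternatively, of the Schwede--Shipley result on monoids in a monoidal model category (\cite{SS}, Sect.~5), using that the model structure is transferred from the projective structure on $\Com_k$. What you have written is, in effect, the proof contained in those references specialized to the associative and commutative operads: transfer along the free-algebra adjunction $T_k$ (resp.\ $\Sym_k$) via the small object argument with generating (acyclic) cofibrations built from the disks $D^n$ and spheres $S^{n-1}$, the only genuinely delicate point being that pushouts of the generating acyclic cofibrations are quasi-isomorphisms. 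You correctly locate where $\mathrm{char}(k)=0$ enters, namely that $\overline{\Sym}_k(D^n)$ is acyclic because $\Sigma_n$-coinvariants are exact in characteristic zero (this is exactly why the paper's Theorem~\ref{modax} restricts $\cDGA_k$ to characteristic zero while the associative case holds over any field); the filtration argument needed to show $A\to A\ast_k T(D^n)$ is a quasi-isomorphism is essentially the double-complex computation the paper carries out later in the proof of Proposition~\ref{dgproper}. Two minor inaccuracies: coequalizers in $\DGA_k$ are \emph{not} computed in $\Com_k$ (the coequalizer of $f,g\colon A\rightrightarrows B$ is $B$ modulo the two-sided DG ideal generated by the elements $f(a)-g(a)$, not the cokernel of $f-g$; only reflexive coequalizers and filtered colimits are created by the forgetful functor), and $\Sym_k(D^n)$ is not acyclic but rather quasi-isomorphic to $k$ -- it is its augmentation ideal that is acyclic. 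Neither affects the validity of the argument.
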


\ni
Theorem~\ref{modax} is a special case of a general result of Hinich on model structure on
categories of algebras over an operad (\cite{H}, Theorem~4.1.1 and Remark 4.2). Note that the
model structure on $ \DGA_k $ is compatible with the projective model structure on the category
$ \Com_k $ of complexes (see Example~\ref{Example1}).
Since a DG algebra is just an algebra object (monoid) in $ \Com_k $, Theorem~\ref{modax}
follows also from \cite{SS} (see {\it op. cit.}, Sect.~5).

\subsection{}\la{3.2}
It is often convenient to work with non-negatively graded DG algebras.
We denote the full subcategory of such DG algebras by $ \DGA_k^+ $ and the corresponding
subcategory of commutative DG algebras by $ \cDGA_k^+ $.

We recall that a DG algebra $\, R \in \DGA_k^+ \,$ is called {\it almost free} if
its underlying graded algebra $ R_\# $ is free (i.e. $ R_\# $ is
isomorphic to the tensor algebra $ T_k V $  of a graded vector space $V$).
More generally, we say that a DG algebra map $ f: A \to B $ in $ \DGA_k^+ $
is an (almost) {\it free extension} if there is an isomorphism $\,B_\# \cong A_\# \ast_k T_k V\,$,
of underlying graded algebras such that the
composition of $ f_\# $ with this isomorphism is the canonical map $\,A_\# \into A_\# \ast_k T_k V \,$.
Here, $ A_\# \ast_k T_k V $ is the free product (coproduct) of $ A_\#  $ with a free graded algebra.

Similarly, a commutative DG algebra $ S \in \cDGA_k^+ $ is called {\it almost free}
if $\, S_\# \cong \bL_k V \,$ for some graded vector space $V$. A morphism
$ f: A \to B $ in $ \cDGA_k^+ $ is an (almost) {\it free extension} if $ f_\# $ is
isomorphic to the canonical map $\,A_\# \into A_\# \otimes \bL_k V  \,$.

\begin{theorem}
\la{modax1}
The categories $ \DGA_k^+ $ and $ \cDGA_k^+ $ have model structures in which

(i)\ the weak equivalences are the quasi-isomorphisms,

(ii)\ the fibrations are the maps which are surjective in all {\rm positive} degrees,

(iii)\ the cofibrations are the retracts of almost free algebras ({\it cf.}\,MC3).

\noindent
Both categories $ \DGA_k^+ $ and $ \cDGA_k^+ $ are fibrant, with the initial object $ k $ and
the terminal $0$.
\end{theorem}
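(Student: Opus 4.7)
The plan is to establish the model structure by transferring the standard projective model structure from the category $\Com_k^+$ of non-negatively graded chain complexes along the free algebra adjunction $T: \Com_k^+ \rightleftarrows \DGA_k^+ : U$, where $T$ is the tensor algebra functor and $U$ is the forgetful functor; for the commutative case $\cDGA_k^+$, one replaces $T$ by the graded symmetric algebra functor $\bL$ (this is where characteristic zero enters). In $\Com_k^+$ the generating cofibrations are the inclusions $S^{n-1} \into D^n$ for $n \geq 1$, and the generating acyclic cofibrations are $0 \into D^n$ for $n \geq 1$, where $D^n$ is the acyclic complex $k \cdot x \oplus k \cdot dx$ concentrated in degrees $n$ and $n-1$, and $S^n$ is $k$ concentrated in degree $n$. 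Applying $T$ (resp. $\bL$) produces the candidate generating (acyclic) cofibrations, and clauses (i) and (ii) of the theorem hold tautologically: fibrations and weak equivalences in $\DGA_k^+$ are detected by $U$, which respects surjections in positive degrees and quasi-isomorphisms.

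The key steps are then to verify the two Kan/Crans transfer criteria. First, $\DGA_k^+$ admits all small limits (computed in $\Com_k^+$) and filtered colimits (computed degreewise, since the tensor product of complexes commutes with filtered colimits), so the small object argument applies with respect to the generating (acyclic) cofibrations, whose sources $T(S^{n-1})$ and $T(0) = k$ are finite-dimensional (hence small). Second, and more substantively, one must show that every pushout of a generating acyclic cofibration $k \to T(D^n)$ along an arbitrary morphism $k \to A$ in $\DGA_k^+$ is a quasi-isomorphism, and that transfinite compositions of such pushouts remain quasi-isomorphisms. The former follows because the pushout $A \to A \ast_k T(D^n)$ is an almost free extension by a contractible factor, which by a filtration argument on word length in the coproduct is a quasi-isomorphism; the latter follows from the fact that a filtered colimit of quasi-isomorphisms in $\Com_k^+$ is a quasi-isomorphism. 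Together these yield the factorization axiom MC5; MC1 is by the limit/colimit discussion, MC2 is automatic for quasi-isomorphisms, MC3 for cofibrations is automatic from their LLP characterization and for the other two classes is routine, and MC4 reduces to the definition of cofibrations together with the retract argument applied to the factorization of an acyclic cofibration as (acyclic cofibration of cell type)\,$\circ$\,(fibration).

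The main obstacle, and the content of (iii), is to identify the cofibrations concretely as retracts of almost free extensions. One direction is straightforward: by the small object argument, pushouts and transfinite compositions of the generating cofibrations $T(S^{n-1} \into D^n)$ are precisely almost free extensions (the attached generators $x$ of degree $n$ carry $dx$ of degree $n-1$ lying in the previous stage), so these and their retracts have the LLP against acyclic fibrations. Conversely, given any cofibration $f: A \to B$, factor it via MC5 as $A \hookrightarrow B' \stackrel{\sim}{\onto} B$ with the first map an almost free extension; the lifting axiom MC4 then provides $B \to B'$ splitting the acyclic fibration, exhibiting $f$ as a retract of an almost free extension. The commutative case is handled identically, using that $\bL$ of a complex with a contractible free factor is quasi-isomorphic to $\bL$ of the quotient (where the K\"unneth formula and characteristic zero ensure symmetric powers of contractible complexes are contractible). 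Finally, the assertions that $\DGA_k^+$ and $\cDGA_k^+$ are fibrant with initial object $k$ and terminal object $0$ are immediate, since every map to the zero algebra is trivially surjective in positive degrees.
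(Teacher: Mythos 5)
Your proposal is correct in its overall strategy, and it is worth noting that the paper itself gives no proof of this theorem: it simply cites Bousfield--Gugenheim and Gelfand--Manin for $\cDGA_k^+$ and Munkholm for $\DGA_k^+$. Transferring the projective model structure on $\Com^+(k)$ along the free--forgetful adjunction (tensor algebra in the associative case, $\bL$ in the commutative case) is exactly the standard route taken in those references, and your verification of the transfer criteria --- smallness of the sources, acyclicity of pushouts $A \to A \ast_k T(D^n)$ via the word-length decomposition, acyclicity of $A \otimes \bL(D^n)$ via the characteristic-zero Künneth argument, and the retract argument identifying cofibrations with retracts of almost free extensions --- is the right content. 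You also correctly localize the dependence on $\mathrm{char}\,k = 0$ to the commutative case.

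There is, however, one concrete slip that would propagate into a failure of part (iii): your set of generating cofibrations for $\Com^+(k)$ is incomplete. The maps $S^{n-1} \into D^n$ with $n \ge 1$ only ever attach generators in degrees $\ge 1$, so the saturation of $\{T(S^{n-1} \into D^n)\}_{n\ge 1}$ consists of almost free extensions that add no new degree-zero generators; in particular $k \to k\langle x\rangle$ with $|x|=0$, $dx=0$ would fail to be a cofibration, contradicting (iii). You must add the generating cofibration $0 \into S^0$ (whose image under $T$, resp.\ $\bL$, is $k \to k\langle x\rangle$, resp.\ $k \to k[x]$, with $x$ in degree $0$). With that correction, your identification of cell complexes with almost free extensions goes through; it is worth making explicit that in the non-negatively graded setting every almost free extension really is a cell complex, because the differential of a degree-$n$ generator lands in degree $n-1$ and hence involves only generators of strictly smaller degree, so the generators can be attached in order of degree. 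This last point is precisely what fails in the unbounded categories of Theorem~\ref{modax}, where (iii) must be replaced by the lifting-property characterization.
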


The model structure on $ \cDGA_k^+ $ described by Theorem~\ref{modax1} is a `chain' version
of a well-known model structure on the category of commutative {\it cochain} DG algebras. This
last structure plays a prominent role in rational homotopy theory and the verification of
axioms for $ \cDGA_k^+ $ can be found in many places (see, e.g., \cite{BG} or \cite{GM}, Chap.~V).
The model structure on $ \DGA_k^+ $ is also well known: a detailed proof of Theorem~\ref{modax1} for
$ \DGA_k^+ $ can be found in \cite{M}. The assumption that $ k $ has characteristic $0$ is essential
for the existence of the above model structures on $ \cDGA_k $ and $ \cDGA_k^+ $; on the other hand,
the model structures on $\DGA_k $ and $ \DGA_k^+ $ exist for an arbitrary field $k$.

\subsection{}\la{3.3}
Let $ S $ be a fixed DG algebra in $ \DGA_k $. Then, by \ref{2.1.2}, the category $ \DGA_S $ of DG
algebras over $S$ has a natural model structure induced from $ \DGA_k $. The initial object in $ \DGA_S $
is the identity map $ \id_S $. Hence, a cofibrant resolution of $ S \to A $ in $ \DGA_S $ has the form
\begin{equation}\la{ress}
S \stackrel{i}{\into} R \stackrel{p}{\onto} A\ ,
\end{equation}
where $ i $ is a cofibration and $ p $ is an acyclic fibration (i.e., a surjective quasi-isomorphism)
in $ \DGA_k $. Similarly, if $ S \in \DGA_k^+ $, the category $ \DGA^+_S $ of non-negatively graded
DG algebras over $S$ is a model category. In this category, there is a special class of cofibrant
resolutions, namely the almost free resolutions, which are the diagrams \eqref{ress},
with $ i $ being an almost free extension of $S$.

Let $ S $ be a non-negatively graded DG algebra. Then we can form
two model categories: $\DGA_S^+$  and $\DGA_S $. The relation between these model
categories is clarified by the following
\bprop
\la{3.2for}
The inclusion functor $\,\iota: \DGA_S^+ \into \DGA_S \,$ preserves the cofibrations and the
weak equivalences. In particular, it descends to the corresponding homotopy categories.
\eprop
\bproof
It is clear that $ \iota $ preserves the weak equivalences. On the other hand,
it has a right adjoint functor $ \tau:\,\DGA_S \into \DGA^+_S  $, which is given by truncation
$$
\tau(\ldots \to A_1 \to A_0 \xrightarrow{d_0} A_{-1} \to \ldots) = (\ldots \to A_1 \to Z_0(A) \to 0)\ .
$$
where $\,Z_0(A) := \ker(d_0)\,$. Now, by construction, $ \tau $ preserves the acyclic fibrations. Since the
cofibrations in any model category can be characterized as the maps having the
left-lifting property with respect to acyclic fibrations (see \cite{DS}, Prop.~3.13), we conclude
(by adjunction) that $ \iota $ preserves the cofibrations. The last statement follows from
Theorem~\ref{Tloc}.
\eproof
Proposition~\ref{3.2for} allows one to use almost free resolutions of morphisms $ S \to A $
of non-negatively graded DG algebras as cofibrant resolutions both in $ \DGA^+_S $ and $ \DGA_S $.
A similar result also holds for the categories of commutative DG algebras.

\subsection{}\la{3.4} The homotopy equivalence relation in model categories of DG algebras can be described in explicit terms. Consider the (chain) complex of $k$-vector spaces
$$
\textbf{V}:=\,[\, 0 \rar k.t \xrightarrow{d} k.dt \rar 0 \,]\ ,
$$
where $ t $ has degree $0$ and $d$ maps $t$ to $dt$. Put $ \Omega := \bL_k \textbf{V} $
and $ \Omega_{\rm As} := T_k\textbf{V}$. Notice that $ \Omega $ is just the algebraic de Rham complex
of the affine line (although with differential having degree $-1$), and there is a natural DG algebra map
$ \Omega_{\rm As} \to \Omega $. For each $a \in k$, the assignment $t \mapsto a,\,\,\,\,\,dt\mapsto 0$ gives a
map of complexes $\,\textbf{V} \rar k\,$, which extends to a (unique) morphism of DG algebras
$\, \text{ev}_a:\, \Omega_{\rm As} \rar k\, $. Similarly, one obtains DG algebra maps
$ \text{ev}_a: \Omega \rar k$ in $\cDGA_k$. Clearly, for each $a\in k$, the following diagram commutes

\begin{equation}\la{evl}
\begin{diagram}[small, tight]
\Omega_{\rm As} &  \rTo & \Omega\\
       &  \rdTo_{\text{ev}_a}            & \dTo_{\text{ev}_a}\\
                 &              & k
\end{diagram}
\end{equation}

Fix a DG algebra $ S \in \DGA_k $ and let $ \DGA_S $ denote the category of DG algebras over $S$ with model
structure induced from $ \DGA_k $ ({\it cf.} \ref{2.1.2}).
\bprop
\la{lhom}
Let $ f,\, g:\, A \to B $ be two morphisms in $ \DGA_S $, where $A$ is a cofibrant object in $ \DGA_S $.
The following conditions are equivalent.\\
$(i)$ $\, f \sim g\,$ in $ \DGA_S \, $.\\
$(ii)$\ There exists a morphism $\,h:\, A \rar B*_k \Omega_{\rm As}\,$ in $ \DGA_S $
such that $\,h(0):=\ev_0 \circ h=f$ and $\, h(1):=\ev_1 \circ h=g$.\\
$(iii)$\ There exists a morphism $h:A \rar B \otimes \Omega$ in $\DGA_S $ such that $h(0)=f$ and $h(1)=g$.\\
$(iv)$\ There exists a family $\phi_t:A \rar B$ of morphisms in $\DGA_S$ and a family of $S$-linear derivations
$\,s_t:A \rar B\,$ of degree $1$ with respect to $\,\phi_t\,$, such that $\phi_t,s_t$ vary polynomially
with $t$, $\phi_0=f,\,\,\,\phi_1=g$ and $\frac{d}{dt}(\phi_t)=[d,s_t]$.

\eprop
\bproof
$(i) \,\,\Leftrightarrow \,\,(ii)$:  Note that the extension $B \rar B*_k \Omega_{\rm As} $ is an acyclic cofibration and that the composite map
$$B \rar B*_k \Omega_{\rm As} \xrightarrow{(\text{ev}_0,\,\text{ev}_1)} B \times B$$
equals the map $(\text{id},\text{id}):B \rar B \times B$. Since $(\text{ev}_0,\text{ev}_1)$ is clearly a fibration, $ B*_k \Omega_{\rm As} $ is a path object on $B$. By \ref{2.2.2}, $(i)$ is equivalent to $(ii)$.

$(ii)\,\,\Leftrightarrow\,\,(iii)$: Given $h:A \rar B*_k \Omega_{\rm As} $ such that $h(0)=f,\,\,\,h(1)=g$, we simply compose it with the natural surjection $B*_k \Omega_{\rm As} \rar B \otimes \Omega$ and use the diagram~\eqref{evl} to see that
$(ii)$ implies $(iii)$. For the converse, note that the natural surjection $p: B*_k \Omega_{\rm As} \rar B \otimes \Omega $ is an acyclic fibration in $ \DGA_S $. Given $h:A \rar B \otimes \Omega$, consider the commutative diagram
\begin{equation*}
\begin{diagram}[small, tight]
 S &  \rTo & B*_k\Omega_{\rm As} \\
    \dTo   &             & \dOnto_p\\
      A           &   \rTo^{h}        & B\otimes \Omega
\end{diagram}
\end{equation*}
Since $A$ is cofibrant and $p$ is an acyclic fibration, there exists $\tilde{h}:A \rar B*_k\Omega_{\rm As} $
such that $p\tilde{h}=h$. That $\tilde{h}(0)=f$ and $\tilde{h}(1)=g$ follows from the diagram~\eqref{evl}.

$(iii)\,\,\Leftrightarrow\,\,(iv)$: $h$ and $\,(\phi_t,s_t)\,$ are related by the formula
$\, h(a)=\phi_t(a) + dt.s_t(a)\,$, where $ a \in A $.
\eproof
\subsubsection{Remark}\la{3.4.1} Proposition~\ref{lhom} holds word for word for morphisms  $f,g:A \rar B$ in $\DGA_S^+$
(with $S$ in $\DGA_k^+$).

\subsubsection{Remark}\la{3.4.2} An analogous (in fact, simpler) result holds for the categories $\cDGA_S$ and $\cDGA_S^+$
since $ B \otimes \Omega$ together with the maps $\text{ev}_0,\,\,\text{ev}_1$ gives a path object for $B$. For $f,g:A \rar B$ in $\cDGA_S $ with $A$ cofibrant, $f \sim g$ iff
 there exists $h:A \rar B \otimes \Omega$ in $\cDGA_S $ with $h(0)=f,\,\,h(1)=g$ iff there exists a family $\phi_t:A \rar B$ of morphisms in $\cDGA_S $ and a family $s_t:A \rar B$ of degree $1$ derivations with respect to $\phi_t$ such that $\phi_t,s_t$ vary polynomially with $t$, $\phi_0=f,\,\,\,\phi_1=g$ and $\frac{d}{dt}(\phi_t)=[d,s_t]$. An analogous statement with the changes outlined in Remark~\ref{3.4.1} holds for $\cDGA_S^+$.

\subsubsection{Remark} \la{3.4.3} Proposition~\ref{lhom} is a stronger version of \cite{FHT}, Proposition~3.5,
and \cite{CK}, Proposition~3.6.4. Following \cite{CK},
we will refer to the data $ (\phi_t,\,s_t) $ as a {\it polynomial M-homotopy} from $f$ to $g$.
For a good overview of the homotopy theory of DG algebras we refer to \cite{FHT}, Sect.~3. In a more general form
(namely, for algebras over an operad), Proposition~\ref{lhom} appears as a remark (without proof)
in Hinich's paper~\cite{H}, see {\it op. cit.}, Remark~4.8.10. The proof we provide works also
for a number of operads, including the operad $\mathfrak{Poiss}$ governing the Poisson algebras.

\subsubsection{Remark} \la{3.4.4} We remark that if $h:A \rar B \otimes \Omega$ is a morphism in $\DGA_S$ or $\cDGA_S $, then $h(0)$ and $h(1)$ induce the same map at the level of homology. This follows (since we are working over a field of characteristic $0$) from the fact that for any $a \in k$, $\text{ev}_a:\Omega \rar k$ is homotopic to $\text{ev}_0:\Omega \rar k$ as maps of complexes
(the corresponding homotopy $\,s_a:\,k[t]dt \rar k\,$ is given by integration $\,g(t)dt \mapsto \int_0^a g(t)dt\,$).

\subsection{Properness of $\DGA_k$ and $\cDGA_k$} The following result should be known to the experts; however; we could not
find it in the literature.
\la{3.5}
\bprop \la{dgproper}
The model categories $\DGA_k$ and $\cDGA_k$ are proper.
\eprop
\bproof
We first prove the proposition for $\DGA_k$. Since cofibrations are retracts of free extensions ({\it cf.}
\cite{H}, Remark~2.2.5), it suffices to show that the pushout of a weak equivalence by a free extension is a weak equivalence. Further, since any free extension is a direct limit of finitely generated free extensions and since homology commutes with direct limits, it suffices to verify that the pushout of a weak equivalence by a finitely generated free extension is a weak equivalence. By induction on the number of generators, one reduces the latter verification to the case of the pushout of a weak equivalence by a free extension with one generator. Let $f:A \rar B$ be a weak equivalence in $\DGA_k$. Suppose that $A \rar A\langle x \rangle$ is a free extension. Suppose that $x$ has degree $i$. Then $ A\langle x \rangle \cong
\mbox{\rm Tot}^{\oplus}(\text{C}(A,x)) $, where $\text{C}(A,x)$ is the double complex concentrated in the right half-plane
whose column in homological degree $n$ is the complex
$$
(A\,x\,A\,x\,\ldots\,x\,A)[-n]
$$
with $n-1$ copies of $x$ and whose vertical differential is induced by that of $A$ (keeping in mind the Koszul sign rule and the fact that $x$ is of degree $i$) and whose horizontal differential is induced by differentiating with respect to $x$. Note that the column in homological degree $n$ is isomorphic to $A[i]^{\otimes n-1} \otimes A[-n]$. The pushout of $f$ is then seen to be the map $\text{Tot}(\text{C}(A,x)) \to \text{Tot}(\text{C}(B,x))$ induced by the morphism
$\tilde{f}$ of double complexes coinciding with $f[i]^{\otimes n-1} \otimes f[-n]$ on the column of homological degree $n$ for each $n$. Clearly, $\tilde{f}$ induces a quasi-isomorphism on columns. It therefore, induces a quasi-isomorphism of total complexes. This proves that $\DGA_k$ is left proper. Since $\DGA_k$ is fibrant, it is right proper (see section~\ref{2.5.1}) and therefore, proper.

To show that $\cDGA_k$ is left proper, we reduce to the case of a free extension by a single generator as we did for $\DGA_k$. Let $f:A \rar B$ is a weak equivalence in $\cDGA_k$ and $A \rar A[x]$ be a free extension by a single generator. If $x$ has degree $i$, $A[x]$ is the total complex of a double complex $\text{C}(A,x)$ concentrated in the right half-plane, whose column in homological degree $n$ is $A.x^{n-1}[-n]$ (with vertical differential induced by that of $A$ and horizontal differential being induced by differentiating with respect to $x$). The remainder of the argument showing that $\cDGA_k$ is proper is a trivial modification of the argument showing that $\DGA_k$ is proper.
\eproof

\subsubsection{Remark} \la{3.5.1}\la{dgaSpr} Proposition~\ref{dgproper} implies that the category $\DGA_S$ is proper for any $S \in \DGA_k$. This is because the category $\DGA_S$ is fibrant (and hence, right proper) and because the pushout of a weak equivalence along a cofibration in $\DGA_S$ can be viewed as the pushout of the same weak equivalence by the same cofibration in $\DGA_k$ (which means that $\DGA_S$ is left proper by Proposition~\ref{dgproper}).

\subsubsection{Remark} \la{3.5.2} The proof of Proposition~\ref{dgproper} works for non-negatively graded DG algebras.
Thus $ \DGA_k^+ $ and $ \cDGA_k^+ $ are also proper model categories.

\bibliographystyle{amsalpha}

\end{document}